\theoremstyle{definition}
\theoremstyle{definition}    \newtheorem{dfn}{Definition}[section]
\theoremstyle{definition}    \newtheorem{rmk}[dfn]{Remark}
\theoremstyle{definition}    
\theoremstyle{definition}    \newtheorem{prp}[dfn]{Proposition}
\theoremstyle{definition}    
\theoremstyle{definition}    \newtheorem{lem}[dfn]{Lemma}
\theoremstyle{definition}    \newtheorem{cor}[dfn]{Corollary}
\theoremstyle{definition}    
\theoremstyle{definition}    \newtheorem{nta}[dfn]{Notation}
\theoremstyle{definition}    
\theoremstyle{definition}    
\theoremstyle{definition}    
\newcommand{\trm}{\textrm}
\newcommand{\tbf}{\textbf}
\newcommand{\C}{{\mathbb C}}
\newcommand{\R}{{\mathbb R}}
\newcommand{\Z}{{\mathbb Z}}
  \newcommand{\cC}{\mathcal C}
\newcommand{\cD}{\mathcal D} 
\newcommand{\cF}{\mathcal F}  
\newcommand{\cL}{\mathcal L}
\newcommand{\cU}{\mathcal U} \newcommand{\cV}{\mathcal V} 
\newcommand{\f}{\mathfrak}
\newcommand{\al}{\alpha}
\newcommand{\be}{\beta}
\newcommand{\ga}{\gamma}     \newcommand{\Ga}{{\Gamma}}
\newcommand{\de}{\delta}
\newcommand{\la}{\lambda}    \newcommand{\La}{{\Lambda}}
\newcommand{\te}{\theta}    
\newcommand{\si}{\sigma}     
\newcommand{\om}{\omega}     \newcommand{\Om}{{\Omega}}
\newcommand{\vph}{\varphi}
\newcommand{\ptm}{\phantom{1}}
\newcommand{\vect}{\textrm{Vect}}
\newcommand{\tr}{\textrm{tr}}
\newcommand{\ch}{\textrm{ch}}
\newcommand{\Ch}{\textrm{Ch}}
\newcommand{\cs}{\textrm{cs}}
\newcommand{\CS}{\textrm{CS}}
\newcommand{\odd}{\textrm{odd}}
\newcommand{\even}{\textrm{even}}
\newcommand{\we}{\wedge}      
\newcommand{\dsum}{\oplus}    
\newcommand{\srl}{\stackrel}
 \newcommand{\ra}{\rightarrow} \newcommand{\lra}{\longrightarrow} 
  \newcommand{\wbar}{\overline}
\newcommand{\wtl}{\widetilde}
\newcommand{\ptl}{\partial}
\newcommand{\na}{\nabla}
\newcommand{\bl}{\bullet}
\newcommand{\isom}{\cong}     
\newcommand{\bmat}{\left(\begin{array}}  \newcommand{\emat}{\end{array}\right)}
\newcommand{\barr}{\begin{array}}  \newcommand{\earr}{\end{array}}
\newcommand{\bcd}{\begin{CD}}  \newcommand{\ecd}{\end{CD}}
\newcommand{\beq}{\begin{equation}\begin{aligned}}  \newcommand{\eeq}{\end{aligned}\end{equation}}
\newcommand{\beqs}{\begin{equation*}\begin{aligned}}  \newcommand{\eeqs}{\end{aligned}\end{equation*}}
\title{Geometric models of twisted differential $K$-theory I}
\author{Byungdo Park}
\address{Hausdorff Research Institute for Mathematics (HIM), Poppelsdorfer Allee 45, 53115 Bonn, Germany}
\email{byungdpark@gmail.com}
\date{2 August 2017}
\subjclass[2010]{Primary 19L50; Secondary 19L10}
\keywords{Twisted $K$-theory, Differential $K$-theory, Twisted Chern Character}
\begin{document}
\sloppy
\maketitle
\begin{abstract}
This is the first in a series of papers constructing geometric models of twisted differential $K$-theory. In this paper we construct a model of even twisted differential $K$-theory when the underlying topological twist represents a torsion class. By differential twists we will mean smooth $U(1)$-gerbes with connection, and we use twisted vector bundles with connection as cocycles. The model we construct satisfies the axioms of Kahle and Valentino, including functoriality, naturality of twists, and the hexagon diagram.  This paper confirms a long-standing hypothetical idea that twisted vector bundles with connection define twisted differential $K$-theory.

\end{abstract}
\maketitle
\setcounter{tocdepth}{1}
\tableofcontents
\section{Introduction}

There has been a considerable interest in twisted and differential refinements of generalized cohomology theories. Several aspects of topology, geometry, analysis, and physics coalesce in this area, and it has provided interesting applications of $\infty$-categorical machinery nicely packaged by $\infty$-sheaves of spectra on the site of smooth manifolds. (See \cites{BN,BNV} for example.)

One important question in generalized cohomology theories is whether one can represent an element of a given generalized cohomology theory of a space using geometric cocycles. For instance, an element of the singular cohomology group of a space can be represented by a singular cocycle in the space, and elements in the complex $K$-theory of a space can be represented by complex vector bundles over that space. However, geometric models are still unknown for most other cohomology theories such as topological modular forms, and even less is known for their twisted and differential refinements. This paper is the first in a series of papers to answer this question for the case of differential refinements of twisted complex $K$-theory.

Twisted $K$-theory was first introduced by Donovan and Karoubi in \cite{DK}, where twists represent torsion classes in degree $3$ integral cohomology, and Rosenberg \cite{Ro1} for all classes. More recently twisted $K$-theory has received much attention because of its applications in classifying D-brane charges in string theory \cite{Wi}, Verlinde algebras \cite{FHT}, and topological insulators \cite{FM}.

An archetype of differential $K$-theory first appeared in Karoubi \cite{Ka1} as the \emph{multiplicative $K$-theory}. This is nowadays known as the flat subgroup of the differential $K$-theory. Lott \cite{Lo} used Karoubi's construction to develop an index theorem, but it was mostly applications in string theory that have rekindled a considerable interest in differential $K$-theory (see Freed \cite{F} for example). Perhaps one of the most remarkable steps forward in differential cohomologies is due to Hopkins and Singer \cite{HS} wherein they construct a differential extension of any exotic cohomology theory in a homotopy-theoretic way. Following this work, Bunke and Schick \cite{BS}, Klonoff, Freed, and Lott \cites{Kl, FL}, Simons and Sullivan \cite{SS}, and Tradler, Wilson, and Zeinalian \cites{TWZ, TWZ2} all came up with more concrete and geometric models of differential $K$-theory. For a more detailed survey on recent developments of differential $K$-theory, we refer the reader to Bunke and Schick \cite{BS2}.

There have been some attempts to twist differential $K$-theory. Carey, Mickelsson, and Wang \cite{CMW} gave a construction that satisfies the square diagram and short exact sequences. Kahle and Valentino \cite{KV}, in an attempt to precisely formulate the $T$-duality for Ramond-Ramond fields in the presence of a $B$-field, gave a list of axioms for twisted differential $K$-theory, which can be generalized as axioms for any twisted differential cohomology theory. They construct a canonical differential twist for the differential $K$-theory of the total space of any torus bundle (\cite{KV} Section 2.2).  However, a construction of twisted differential $K$-theory that satisfies Kahle-Valentino axioms had not been found until very recently: In 2014, Bunke and Nikolaus \cite{BN} constructed a differential refinement of any twisted cohomology theory. Their construction of twisted differential $K$-theory satisfies several properties we would expect, including all Kahle-Valentino axioms, except the push-forward axiom which is not addressed in \cite{BN}. The construction of Bunke and Nikolaus provides a correct model for twisted differential cohomology theory in that their model combines twisted cohomology groups and twisted differential forms in a homotopy theoretic way, analogous to what Hopkins and Singer did in the untwisted case. However, the Bunke-Nikolaus model is not very geometric just as the Hopkins-Singer model is not. We might hope that there exists a more geometric model for a twisted differential cohomology theory, at least in the case of $K$-theory.

The goal of this paper is to construct such a geometric model of twisted differential even $K$-theory in the case that the underlying topological twists represent torsion classes in degree $3$ integral cohomology. We use $U(1)$-gerbes with connection as differential twists and twisted vector bundles with connection as cycles.  We also have constructed a twisted differential $K$-theory for both torsion and non-torsion twistings using lifting bundle gerbes with connection and curving as differential twists and $U_{\tr}$-bundle gerbe modules with connection (due to Bouwknegt, Carey, Mathai, Murray, and Stevenson \cite{BCMMS}) as cycles, which will be discussed in the second paper of this series. Both of our models satisfy all of the Kahle-Valentino axioms except the push-forward axiom which, together with a model of odd twisted differential $K$-theory, will be discussed in subsequent papers.

This paper is organized as follows. In Section \ref{SEC.2}, we review twisted vector bundles and set up some notation. Section \ref{SEC.3} constructs the twisted Chern character form and the twisted Chern-Simons form. We also verify several properties which will be needed in later sections. Section \ref{SEC.4} defines differential twists and constructs an even twisted differential $K$-group. We then show that our construction is functorial, natural with respect to change of differential twist, define maps into and out of the twisted differential $K$-groups, and verify that our model fits into a twisted analogue of the differential $K$-theory hexagon diagram \`a la Simons and Sullivan \cite{SS}.

Having constructed geometric models of the even twisted differential $K$-theory, a natural question arises: ``Is there a map between our geometric model and the Bunke-Nikolaus model?'' Bunke, Nikolaus and V\"olkl \cite{BNV}  answered this question for the case of untwisted differential $K$-theory. In this case, there is a way to obtain a sheaf of spectra on the site of smooth manifolds using the symmetric monoidal category of vector bundles with connection. In \cite{BNV}, they obtain a map between this sheaf of spectra into a Hopkins-Singer sheaf of spectra by the universal property of the pullback. The induced map between abelian groups is called a \emph{cycle map}. Constructing a twisted analogue of the cycle map along this vein is work in progress, which we hope to complete in the near future.

\noindent\textbf{Acknowledgements.} We would like to thank Mahmoud Zeinalian for many helpful discussions and comments as well as constant support during the entire process of this work. We also would like to thank Scott Wilson, Arthur Parzygnat, and Corbett Redden for reading the preliminary version of this paper and providing helpful suggestions and comments as well as Thomas Tradler and Cheyne Miller for useful conversations. We thank Jim Stasheff, Hisham Sati, Ping Xu, and Mathieu Sti\'enon for their interest and detailed comments on this work. We gratefully acknowledge a partial support on this work from Mahmoud Zeinalian's NSF grant DMS-1309099 as well as support and hospitality from the Hausdorff Research Institute for Mathematics during our visit. Finally, we thank the anonymous referee for detailed comments.

\section{Review of twisted vector bundles and twisted $K$-theory}\label{SEC.2}
In this section, we set up notations and briefly review $\la$-twisted vector bundles. A good reference on twisted vector bundles is Karoubi \cite{Ka2}, which has a broader account.

\begin{nta}
Throughout this paper, all of our manifolds are connected compact smooth manifolds, and all our maps are smooth maps unless specified otherwise. In particular, $X$ and $Y$ always denote manifolds. We will use the notation  $U_{i_1\cdots i_n}$ to denote an $n$-fold intersection $U_{i_1}\cap\cdots\cap U_{i_n}$. If an open cover is locally finite and every $n$-fold intersection is contractible for all $n\in\Z^+$, we will call it a good cover.

\end{nta}

\begin{dfn}
Let $\cU=\{U_i\}_{i\in  I}$ be an open cover of $X$, and $\la$ be a $U(1)$-valued completely normalized \v{C}ech $2$-cocycle as recalled below. A \textbf{$\la$-twisted vector bundle} $E$ of rank $n$ over $X$  consists of a family of product bundles $\{U_i\times \C^n:U_i\in \cU\}_{i\in \La}$ together with transition maps $$g_{ji}:U_{ij}\ra U(n)$$ satisfying $$g_{ii}=\tbf{1},\quad g_{ji}=g_{ij}^{-1},\quad g_{kj}g_{ji}=g_{ki}\la_{kji}.$$
\end{dfn}

\begin{rmk} (1) Recall that a \v{C}ech cocycle $\zeta=(\zeta_{i_1\cdots i_n})$ is called \emph{completely normalized} if $\zeta_{i_1\cdots i_n}\equiv 1$ whenever there is a repeated index, and $\zeta_{\si(i_1)\cdots \si(i_n)}=(\zeta_{i_1\cdots i_n})^{\trm{sign}(\si)}$ for any $\si\in\mathfrak{S}_n$, where $\mathfrak{S}_n$ is the symmetric group on $n$ letters.

(2) We write a $\la$-twisted vector bundle $E$ of rank $n$ as a triple $(\cU,\{g_{ji}\},\{\la_{kji}\})$, or a pair $(\{g_{ji}\},\{\la_{kji}\})$ if the open cover $\cU$ is clear from the context. When the rank $n$ is zero, there exists a $\la$-twisted vector bundle $\mathscr{O}=(\{g_{ji}^{\mathscr{O}}\},\{\la_{kji}\})$ with $g_{ji}^{\mathscr{O}}=\tbf{1}$ for all $i,j\in \La$. We call it the \emph{zero $\la$-twisted vector bundle}.
\end{rmk}

\begin{dfn}
A \textbf{morphism} $f$ from a $\la$-twisted vector bundle $E=(\{g_{ji}\},\{\la_{kji}\})$ of rank $n$ to a $\la$-twisted vector bundle $F=(\{h_{ji}\},\{\la_{kji}\})$ of rank $n$, with respect to the same open cover $\{U_i\}_{i\in  I}$ of the base $X$, is a family of maps $\{f_i:U_i\ra U(n)\}_{i\in \La}$ such that $$f_j(x)g_{ji}(x)=h_{ji}(x)f_i(x)\qquad\trm{for all }x\in U_{ij}\trm{ and all } i,j\in\La.$$
\end{dfn}

\begin{dfn} Let $E=(\{{g}_{ji}\},\{{\la}_{kji}\})$ and $F=(\{{h}_{ji}\},\{{\la}_{kji}\})$ be $\la$-twisted vector bundles of rank $n$ and $m$ with respect to the same covering $\cU=\{U_i\}$ of $X$. The \textbf{direct sum} $E\dsum F$ is defined by $(\{{g}_{ji}\dsum {h}_{ji}\},\{{\la}_{kji}\})$, and is a $\la$-twisted vector bundle of rank $n+m$. The symbol $\dsum$ between two transition maps denotes the block sum of matrices.
\end{dfn}

We denote the category of $\la$-twisted vector bundles over $X$ defined on an open cover $\cU$ by $\tbf{Bun}(\cU,\la)$. The category $\tbf{Bun}(\cU,\la)$ is an additive category with respect to the direct sum $\dsum$.

\begin{dfn} The \textbf{twisted $K$-theory} of $X$ defined on an open cover $\cU$ with a $U(1)$-gerbe twisting $\la$, denoted by $K^0(\cU,\la)$, is the Grothendieck group of the commutative monoid $\vect(\cU,\la)$ of isomorphism classes of $\la$-twisted vector bundles on $\cU$.
\end{dfn}

\begin{rmk}\label{RMK.canon.isom.two.different.twisting.K.gp} The isomorphism class of the group $K^0(\cU,\la)$ for a fixed $\cU$ depends only on the \v{C}ech cohomology class of $\la$. To see this, let $\cC$ and $\cD$ be additive categories. Recall that if an additive functor $F:\cC\ra \cD$ is an equivalence of additive categories, then it induces an isomorphism of groups $F_*:K(\cC)\ra K(\cD)$, where $K$ denotes the $K$-theory functor from additive categories to abelian groups. Let $\si$ and $\la$ be cohomologous $U(1)$-valued \v{C}ech 2-cocycles defined on $\cU$, i.e., $\la_{kji}=\si_{kji}\chi_{ji}\chi_{ik}\chi_{kj}$ for some \v{C}ech 1-cochain $\chi$. There is an additive functor $\Phi:\tbf{Bun}(\cU,\si)\ra \tbf{Bun}(\cU,\la)$ that is an isomorphism of categories. The functor $\Phi$ takes a $\si$-twisted vector bundle $(\{g_{ji}\},\{\si_{kji}\})$ to a $\la$-twisted vector bundle $(\{g_{ji}\chi_{ji}\},\{\la_{kji}\})$ and takes a morphism between $\si$-twisted vector bundles to itself. The inverse of $\Phi$ is defined similarly by taking a $\la$-twisted vector bundle $(\{g_{ji}\},\{\la_{kji}\})$ to the $\si$-twisted vector bundle $(\{g_{ji}\chi_{ji}^{-1}\},\{\si_{kji}\})$. Therefore, the induced map of groups $\Phi_*:K^0(\cU,\si)\ra K^0(\cU,\la)$ is an isomorphism.
\end{rmk}

\begin{dfn} Let $f:Y\ra X$ be a map, and $E=(\{{g}_{ji}\},\{{\la}_{kji}\})$ be a $\la$-twisted vector bundle defined on a covering $\cU=\{U_i\}$ of $X$. Let $f^{-1}\cU$ denote the open cover on $Y$ consisting of open sets of the form $f^{-1}(U_i)$. The \textbf{pull-back} of the $\la$-twisted vector bundle $E$ is a $(\la\circ f)$-twisted vector bundle $(f^{-1}\cU,\{{g}_{ji}\circ f\},\{{\la}_{kji}\circ f\})$ on $Y$ denoted by $f^*(E)$.
\end{dfn}

\begin{prp} The map \beqs f^*:\vect(\cU,\la)&\ra \vect(f^{-1}\cU,\la\circ f)\\
[E] &\mapsto [f^*E]
\eeqs is a monoid homomorphism with respect to $\dsum$ and therefore induces a group homomorphism \beqs f^*:K^0(\cU,\la)&\ra K^0(f^{-1}\cU,\la\circ f)\\
[E]-[F] &\mapsto [f^*E]-[f^*F].
\eeqs
\end{prp}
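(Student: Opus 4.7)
The plan is to unwind the definitions and verify three assertions in sequence: that the assignment $[E]\mapsto[f^*E]$ is well-defined on isomorphism classes, that it is additive with respect to $\dsum$, and finally that it extends by the universal property of the Grothendieck construction to a group homomorphism. Most of the work is bookkeeping: the definitions of pull-back, morphism, and direct sum of $\la$-twisted vector bundles are all given pointwise on the base, so pulling back along $f\colon Y\to X$ commutes with each of them.

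First I would check well-definedness. Given a morphism $\{\phi_i\colon U_i\to U(n)\}$ from $E=(\{g_{ji}\},\{\la_{kji}\})$ to $E'=(\{g'_{ji}\},\{\la_{kji}\})$, the family $\{\phi_i\circ f\colon f^{-1}(U_i)\to U(n)\}$ satisfies
\[
(\phi_j\circ f)\cdot(g_{ji}\circ f)=(g'_{ji}\circ f)\cdot(\phi_i\circ f)
\]
on $f^{-1}(U_{ij})$, because pre-composing by $f$ preserves the pointwise identity. Hence isomorphic $\la$-twisted bundles pull back to isomorphic $(\la\circ f)$-twisted bundles, so $f^*$ descends to isomorphism classes. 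One should also observe that the cocycle conditions $g_{ii}\equiv\mathbf{1}$, $g_{ji}=g_{ij}^{-1}$, and $g_{kj}g_{ji}=g_{ki}\la_{kji}$ on $X$ pull back to the corresponding conditions on $Y$ with twisting $\la\circ f$, so $f^*E$ really is a $(\la\circ f)$-twisted vector bundle.

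Next I would verify additivity. For $E=(\{g_{ji}\},\{\la_{kji}\})$ and $F=(\{h_{ji}\},\{\la_{kji}\})$ on the same cover $\cU$, their direct sum has transition maps $g_{ji}\dsum h_{ji}$. Since pulling back commutes with forming block matrices, $(g_{ji}\dsum h_{ji})\circ f = (g_{ji}\circ f)\dsum(h_{ji}\circ f)$, so $f^*(E\dsum F) = f^*E\dsum f^*F$ as $(\la\circ f)$-twisted bundles on the cover $f^{-1}\cU$. The zero bundle $\mathscr{O}$ pulls back to the zero bundle, since its transition maps are all identities. Therefore the map $f^*\colon\vect(\cU,\la)\to\vect(f^{-1}\cU,\la\circ f)$ is a monoid homomorphism.

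Finally, the induced group homomorphism follows formally from the universal property of the Grothendieck group: any monoid homomorphism between commutative monoids extends uniquely to a group homomorphism between their group completions, and the formula $[E]-[F]\mapsto[f^*E]-[f^*F]$ is precisely this extension. No step here is a real obstacle; the only care required is to keep track of the fact that the twisting cocycle changes from $\la$ to $\la\circ f$ and that the covering changes from $\cU$ to $f^{-1}\cU$, so that the source and target categories are different and the statement is about a morphism between different monoids rather than an endomorphism.
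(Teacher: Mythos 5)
Your proposal is correct and follows essentially the same route as the paper's proof: check well-definedness on isomorphism classes, verify $f^*(E\dsum F)=f^*E\dsum f^*F$ so that $f^*$ is a monoid homomorphism, and then invoke the universal property of the Grothendieck group. The paper simply states these steps more tersely; your version fills in the routine verifications.
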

\begin{proof} The map is well-defined on $\vect(\cU,\la)$. Given another $\la$-twisted vector bundle $F$, we have $f^*(E)\dsum f^*(F)= f^*(E\dsum F)$. Hence $f^*$ is a monoid homomorphism, which induces a group homomorphism $f^*$ between $K$-groups.
\end{proof}

\section{Chern-Weil theory of twisted vector bundles}\label{SEC.3}
In this section, we review Chern-Weil theory of twisted vector bundles and define twisted Chern-Simons forms. We will also prove several lemmata which will be needed in subsequent sections. We begin with a summary of the language of $U(1)$-gerbes with connection used in this paper. We refer the reader to Gaw{\polhk{e}}dzki and Reis \cite{GR} for more details.

\begin{dfn}\label{DFN.GerbeWithConnection} Let $X$ be a   manifold and  $\cU:=\{U_i\}_{i\in \La}$ an open cover of $X$. A \textbf{$U(1)$-gerbe} over $X$ subordinate to $\cU$ is a $U(1)$-valued completely normalized \v{C}ech $2$-cocycle $\{\la_{kji}\}\in \check Z^2(\cU,U(1))$. A \textbf{connection} on a $U(1)$-gerbe $\{\la_{kji}\}$ on $\cU$ is a pair $(\{A_{ji}\},\{B_i\})$ consisting of a family of differential $1$-forms $\{A_{ji}\in \Om^1(U_{ij};\sqrt{-1}\R)\}_{i,j\in \La}$ and a family of differential $2$-forms $\{B_i\in \Om^2(U_{i};\sqrt{-1}\R)\}_{i\in \La}$ satisfying the following relations: \begin{itemize}
\item[\textbf{C1.} ] $\la_{kji}\la_{lji}^{-1}\la_{lki}\la_{lkj}^{-1}=1$
\item[\textbf{C2.} ] $d\log\la_{kji}=A_{ji}+A_{ik}+A_{kj}$
\item[\textbf{C3.} ] $B_{j}-B_{i}=dA_{ji}$
\end{itemize}
\end{dfn}
\begin{rmk}\label{RMK.U1gerbe.and.curvatureform} (1) A $U(1)$-gerbe with connection on $\cU$ is therefore a Deligne cocycle of degree $2$. Notice that our total differential is $D=d+(-1)^{q}\de$ on $\check{C}^p(\cU,\Om^q)$.

(2) From $dB_i=dB_j$ for all $i,j\in \La$, the family of exact $3$-forms $\{dB_i\}_{i\in \La}$ defines a global closed differential $3$-form $H$. The differential form $H$ is called the \emph{curvature} of the $U(1)$-gerbe or the \emph{Neveu-Schwarz $3$-form}.

(3) Let $\{\la_{kji}\}\in \check Z^2(\cU,U(1))$ be a $U(1)$-gerbe, and $\de:\check H^2(\cU,U(1))\ra H^3(X;2\pi i\Z)$ be the connecting map. The image in $H_{\trm{dR}}^3(X;\sqrt{-1}\R)$ of the cohomology class $\de([\la])\in H^3(X;2\pi i\Z)$ coincides with the cohomology class of $H\in H_{\trm{dR}}^3(X;\sqrt{-1}\R)$. (See Brylinski \cite[p.175]{Br} Corollary 4.2.8.)
\end{rmk}

Throughout the rest of this paper $\widehat\la=(\{\la_{kji}\},\{A_{ji}\},\{B_i\})$ always denotes a $U(1)$-gerbe with connection defined on an open cover $\cU=\{U_i\}_{i\in \La}$ of $X$ and $H$ denotes the $3$-curvature form of $\widehat\la$. We assume that the Dixmier-Douady class of $\la$ is a torsion class in $H^3(X;\Z)$.

\begin{dfn}\label{DEF.twisted.connection} Let  $\widehat\la=(\{\la_{kji}\},\{A_{ji}\},\{B_i\})$ be as above, $E=(\cU,\{g_{ji}\},\{\la_{kji}\})$ a smooth $\la$-twisted vector bundle of rank $n$. A \textbf{connection} on $E$ compatible with $\widehat\la$ is a family $\Ga=\{\Ga_i\in \Om^1(U_i;\mathfrak{u}(n))\}_{i\in\La}$  satisfying  \beq\label{EQN.twisted.connection.local.rel} \Ga_i-g_{ji}^{-1}\Ga_j g_{ji}-g_{ji}^{-1}dg_{ji}=-A_{ji}\cdot \tbf{1},
\eeq  where $A_{ji}\in \Om^1(U_{ij};i\R)$. Here $\mathfrak{u}(n)$ denotes the Lie algebra of $U(n)$, and \tbf{1} the identity matrix.
\end{dfn}

\begin{lem}\label{LEM.conn.off.term.rel} In the notation of Definition \ref{DEF.twisted.connection}, $A_{ji}\cdot\tbf{1}-A_{ki}\cdot\tbf{1}+A_{kj}\cdot\tbf{1}=\la_{kji}^{-1}d\la_{kji}\cdot\tbf{1}$.
\end{lem}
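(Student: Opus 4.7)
The plan is to derive the identity as an almost immediate rewriting of condition \textbf{C2} from Definition \ref{DFN.GerbeWithConnection}. On the triple intersection $U_{ijk}$, condition \textbf{C2} states
$$d\log\lambda_{kji} = A_{ji} + A_{ik} + A_{kj}.$$
Two substitutions convert this into the claim. First, since $\lambda_{kji}$ takes values in $U(1)$, one has $d\log\lambda_{kji} = \lambda_{kji}^{-1}d\lambda_{kji}$. Second, since the connection $1$-forms are antisymmetric in their indices, $A_{ik} = -A_{ki}$. Tensoring both sides with the identity matrix $\mathbf{1}$ then yields the asserted equation.

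The only step meriting a short justification is $A_{ik} = -A_{ki}$. This is the natural normalization convention for a Deligne $1$-cochain of $1$-forms, parallel to the complete normalization already imposed on the \v{C}ech cocycle $\lambda$. It can also be extracted directly from the stated axioms by applying \textbf{C2} with a repeated index: setting $j = k$ yields $A_{ji} + A_{ij} + A_{jj} = 0$ on $U_{ij}$ (since $\lambda_{jji} = 1$ by complete normalization), which, together with the standing convention $A_{ii} = 0$, forces $A_{ij} = -A_{ji}$.

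There is no genuine obstacle here; the lemma merely records the antisymmetrized form of \textbf{C2} in a format convenient for the twisted Chern--Weil and Chern--Simons computations that follow. The actual write-up should amount to a one-line calculation invoking \textbf{C2} together with the two substitutions above.
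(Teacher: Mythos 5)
Your argument is correct as a derivation of the displayed identity, but it follows a genuinely different route from the paper. The paper's proof never invokes \textbf{C2}: it substitutes the compatibility relation \eqref{EQN.twisted.connection.local.rel} for the ordered pairs $(j,i)$, $(k,i)$, $(k,j)$ and then uses the twisted cocycle identity $g_{kj}g_{ji}=g_{ki}\la_{kji}$ (and its differential), so the lemma is really a consistency check: the mere existence of a $\widehat\la$-compatible connection $\Ga$ on a twisted bundle forces $A_{ji}-A_{ki}+A_{kj}=\la_{kji}^{-1}d\la_{kji}$, in agreement with the gerbe axioms. You instead read the identity off from \textbf{C2} together with the antisymmetry $A_{ik}=-A_{ki}$. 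That is legitimate, but note the caveat you yourself flag: Definition \ref{DFN.GerbeWithConnection} nowhere imposes $A_{ik}=-A_{ki}$ or $A_{ii}=0$, and your repair (apply \textbf{C2} with $j=k$ and use $\la_{jji}=1$) is valid only under the reading that \textbf{C2} is required at repeated indices as well — in which case $A_{ii}=0$ need not be taken as a convention either, since $i=j=k$ gives $3A_{ii}=0$. The paper's computation sidesteps this normalization question entirely, because it only ever involves the exact ordered-index combinations appearing in the statement and makes no use of \textbf{C2}; in exchange, your argument is shorter and shows the identity is intrinsic to the differential twist $\widehat\la$, independent of any twisted vector bundle with connection.
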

\begin{proof}\beqs (A_{ji}-A_{ki}+A_{kj})\cdot\tbf{1}&=-(\Ga_{i}-g^{-1}_{ji}\Ga_{j}g_{ji}-g^{-1}_{ji}dg_{ji})+(\Ga_{i}-g^{-1}_{ki}\Ga_{k}g_{ki}-g^{-1}_{ki}dg_{ki})+A_{kj}\cdot\tbf{1}\\
&=g^{-1}_{ji}(g^{-1}_{kj}\Ga_{k}g_{kj}+g^{-1}_{kj}dg_{kj}-A_{kj}\cdot\tbf{1})g_{ji}+g^{-1}_{ji}dg_{ji}\\&\qquad-\la_{kji}g_{ij}g_{jk}\Ga_{k}g_{kj}g_{ji}\la_{kji}^{-1}-g^{-1}_{ki}dg_{ki}+A_{kj}\cdot\tbf{1}\\
&=g^{-1}_{ji}g^{-1}_{kj}\Ga_{k}g_{kj}g_{ji}+g^{-1}_{ji}g^{-1}_{kj}dg_{kj}g_{ji}-A_{kj}\cdot\tbf{1}+g^{-1}_{ji}dg_{ji}\\&\qquad-g_{ij}g_{jk}\Ga_{k}g_{kj}g_{ji}-g^{-1}_{ki}dg_{ki}+A_{kj}\cdot\tbf{1}\\
&=g^{-1}_{ji}g^{-1}_{kj}(-g_{kj}dg_{ji}+dg_{ki}\la_{kji}+g_{ki}d\la_{kji})+g^{-1}_{ji}dg_{ji}-g^{-1}_{ki}dg_{ki}\\
&=g_{ik}g_{kj}g_{ji}g^{-1}_{ji}g^{-1}_{kj}dg_{ki}+g^{-1}_{ji}g^{-1}_{kj}g_{ki}d\la_{kji}-g^{-1}_{ki}dg_{ki}=\la_{kji}^{-1}d\la_{kji}\cdot\tbf{1}.
\eeqs
\end{proof}

\begin{rmk} For any $\la$-twisted vector bundle $E$, there exists a connection on $E$ associated with $\widehat\la$. See \cite[p.244]{Ka2}.
\end{rmk}

\begin{dfn}\label{DEF.curvature.twisted.case} Let  $\widehat\la=(\{\la_{kji}\},\{A_{ji}\},\{B_i\})$ be as above, $(E,\Ga)$ a $\la$-twisted vector bundle $(\cU,\{g_{ji}\},\{\la_{kji}\})$ of rank $n$ with a connection $\Ga$ compatible with $\widehat\la$. The \textbf{curvature form} of $\Ga$ is the family $R=\{R_i\in \Om^2(U_i;\mathfrak{u}(n))\}_{i\in\La}$, where $R_i:=d\Ga_i+\Ga_i\we \Ga_i$.
\end{dfn}

\begin{lem} For each $m\in\Z^+$, the differential forms $\tr\left[(R_i-B_i\cdot\tbf{1})^m\right]$ over the open sets $U_i$ glue together to define a global differential form on $X$. Here $B_i\in \Om^2(U_i;i\R)$ is given by $\widehat{\la}$.
\end{lem}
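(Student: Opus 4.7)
The plan is to show that on each double overlap $U_{ij}$ the local forms $R_i - B_i\cdot\tbf{1}$ and $R_j - B_j\cdot\tbf{1}$ differ by conjugation by $g_{ji}$; once this is established, the cyclicity of the trace gives $\tr[(R_i - B_i\cdot\tbf{1})^m] = \tr[(R_j - B_j\cdot\tbf{1})^m]$ on $U_{ij}$, and the sheaf property of differential forms produces the desired global form on $X$.

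To obtain the conjugation relation, I would start from the defining identity \eqref{EQN.twisted.connection.local.rel} rewritten as $\Ga_i = g_{ji}^{-1}\Ga_j g_{ji} + g_{ji}^{-1}dg_{ji} - A_{ji}\cdot\tbf{1}$, and split this into the ``honest gauge transformation'' piece $\Ga_i' := g_{ji}^{-1}\Ga_j g_{ji} + g_{ji}^{-1}dg_{ji}$ and the scalar correction $a := A_{ji}\cdot\tbf{1}$. The standard calculation for untwisted connections gives $d\Ga_i' + \Ga_i'\we\Ga_i' = g_{ji}^{-1}R_j g_{ji}$. Expanding $R_i = d(\Ga_i' - a) + (\Ga_i'-a)\we(\Ga_i'-a)$ then produces the cross terms $\Ga_i'\we a + a\we\Ga_i'$ and $a\we a$. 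Since $a = A_{ji}\cdot\tbf{1}$ is a central matrix of ordinary $1$-forms, entrywise comparison shows $\Ga_i'\we a = -a\we\Ga_i'$ and $a\we a = 0$, so these terms drop out. What survives is $R_i = g_{ji}^{-1}R_j g_{ji} - dA_{ji}\cdot\tbf{1}$.

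Applying the curving condition \textbf{C3}, namely $dA_{ji} = B_j - B_i$, converts this to $R_i - B_i\cdot\tbf{1} = g_{ji}^{-1}(R_j - B_j\cdot\tbf{1})g_{ji}$, from which the conclusion about $\tr[(R_i-B_i\cdot\tbf{1})^m]$ follows immediately by cyclicity of the matrix trace. Noting also that $\tr[(R_i-B_i\cdot\tbf{1})^m]$ automatically satisfies the usual cocycle compatibility on triple overlaps (since $R_i - B_i\cdot\tbf{1}$ already does), this produces a well-defined global $2m$-form.

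The only genuinely delicate step is the computation of the transformation law for $R_i$: one has to recognise that the scalar correction $A_{ji}\cdot\tbf{1}$ is central in the matrix algebra, which is what makes the wedge cross-terms cancel and leaves just the exact $1$-form $dA_{ji}$; this is precisely the feature that forces the compensating shift by $B_i\cdot\tbf{1}$ to appear when passing from an untwisted Chern--Weil form to a twisted one, and is the whole reason that \textbf{C3} is the correct compatibility to impose between $A_{ji}$ and $B_i$.
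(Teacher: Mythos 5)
Your proposal is correct and follows essentially the same route as the paper: both derive the transformation law $R_i = g_{ji}^{-1}R_jg_{ji} - dA_{ji}\cdot\tbf{1}$ from the compatibility condition, use \textbf{C3} to trade $dA_{ji}$ for $B_j - B_i$, and conclude by centrality of the scalar matrices together with invariance of the trace under conjugation. The only cosmetic difference is that you package the identity as $R_i - B_i\cdot\tbf{1} = g_{ji}^{-1}(R_j - B_j\cdot\tbf{1})g_{ji}$ before taking traces, whereas the paper expands $(g_{ji}^{-1}R_jg_{ji} - B_j\cdot\tbf{1})^m$ binomially and applies cyclicity term by term; you also supply the derivation of the curvature transformation law, which the paper simply asserts.
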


\begin{proof} From \eqref{EQN.twisted.connection.local.rel}, it follows that $R_i=g^{-1}_{ji}R_jg_{ji}-dA_{ji}\cdot\tbf{1}$. Then
\beqs
 \tr\left[(R_i-B_i\cdot\tbf{1})^m\right]&=\tr\left[(g_{ji}^{-1}R_jg_{ji}-dA_{ji}\cdot\tbf{1}-B_i\cdot\tbf{1})^m\right]=\tr\left[(g_{ji}^{-1}R_jg_{ji}-B_j\cdot\tbf{1})^m\right]\\
&=\tr\left[{\sum_{r=0}^m} {{m} \choose {r}} g_{ji}^{-1}R_j^{m-r}g_{ji}(-1)^r(B_j\cdot\tbf{1})^r\right]\\
&\srl{*}{=}\tr\left[{\sum_{r=0}^m} {{m} \choose {r}} R_j^{m-r}(-1)^r(B_j\cdot\tbf{1})^r\right]=\tr\left[(R_j-B_j\cdot\tbf{1})^m\right],
\eeqs
where ${{m} \choose {r}}$ is the binomial coefficient $m$ choose $r$, and at $*$, we have used $\tr(AB)=\tr(BA)$ and the fact that $B_j\cdot\tbf{1}$ commutes with other matrices.
\end{proof}

\begin{dfn} Let  $\widehat\la=(\{\la_{kji}\},\{A_{ji}\},\{B_i\})$ be as above, $H$ the $3$-curvature of $\widehat\la$,
and $(E,\Ga)$ a $\la$-twisted vector bundle with connection compatible with $\widehat\la$. For $m\in\Z^+$, the $m^{\trm{th}}$ \textbf{twisted Chern character form} is defined by
$$\ch_{(m)}(\Ga)(x):= \tr(R_i(x)-B_i(x)\cdot\tbf{1})^m \qquad\textrm{ $x\in U_{i}$}.$$ When $m=0$, define $\ch_{(0)}(\Ga)$ to be the rank of $E$.
The \textbf{total twisted Chern character form} is defined by
$$\ch(\Ga):=\trm{rank}(E)+\sum_{m=1}^\infty \frac{1}{m!}\ch_{(m)}(\Ga),$$ which will be sometimes denoted by $\ch(E,\Ga)$.
\end{dfn}

\begin{rmk} Recall that the $\Z_2$-graded sequence of differential forms $\cdots \ra\Om^{\even}(X)\srl{\small{d+H}}\lra \Om^{\odd}(X)\srl{\small{d+H}}\lra\cdots$ is a complex if $H$ is a closed $3$-form on $X$.  Then the \emph{twisted de Rham cohomology} of $X$ is the cohomology of this complex and  denoted by $H_H^{\trm{even/odd}}(X)$. If closed $3$-forms $H$ and $H'$ are cohomologous, i.e. $H'=H+d\xi$, the multiplication by $\exp(\xi)$ induces an isomorphism $H_H^{\bl}(X)\ra H_{H'}^{\bl}(X)$. We refer the reader to Atiyah and Segal \cite{ASe} for more details on twisted cohomology.
\end{rmk}

The following fact is well-known. (See \cite[p.29]{BCMMS} for example.)

\begin{prp} For each $m\in\Z^+$, $$d\ch_{(m)}(\Ga)=m\ch_{(m-1)}(\Ga)H.$$ Hence the total twisted Chern character form $\ch(\Ga)$ is $(d+H)$-closed.
\end{prp}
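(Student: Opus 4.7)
The plan is a twisted analogue of the standard Chern--Weil proof that $\tr(R^m)$ is closed. Write $F_i := R_i - B_i\cdot\tbf{1}$ on each patch $U_i$; by the preceding lemma $\tr(F_i^m)$ glues to the global form $\ch_{(m)}(\Ga)$, so it suffices to establish the identity patchwise. The key inputs are the Bianchi identity $dR_i = R_i\we\Ga_i - \Ga_i\we R_i$ together with $dB_i = H$. Because $B_i\cdot\tbf{1}$ is a scalar-diagonal $2$-form and hence graded-central in the matrix-valued de Rham algebra, the commutator $[R_i,\Ga_i]$ coincides with $[F_i,\Ga_i]$, and therefore
\[
dF_i = F_i\we\Ga_i - \Ga_i\we F_i - H\cdot\tbf{1}.
\]

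Next I would differentiate $F_i^m$ using the graded Leibniz rule. Since $F_i$ has even total degree no sign appears, and $d(F_i^m) = \sum_{k=0}^{m-1} F_i^k \we (dF_i) \we F_i^{m-1-k}$. Substituting the expression for $dF_i$ and taking the trace splits the result into two pieces. The commutator piece telescopes under the trace to $\tr(F_i^m\we \Ga_i) - \tr(\Ga_i\we F_i^m)$, which vanishes by graded cyclicity of the trace (the relevant sign $(-1)^{2m\cdot 1}$ is trivial). The $H\cdot\tbf{1}$ piece: since $H$ is a scalar $3$-form and each $F_i^k$ is even, $H$ passes through freely, so the $m$ identical summands collapse to a single factor of $m$, yielding $m\, H\we\tr(F_i^{m-1}) = m\,\ch_{(m-1)}(\Ga)\, H$. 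This is the first assertion.

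For $(d+H)$-closedness of the total Chern character $\ch(\Ga) = \trm{rank}(E) + \sum_{m\geq 1}\frac{1}{m!}\ch_{(m)}(\Ga)$, I would substitute the per-degree identity into
\[
(d+H)\ch(\Ga) = \sum_{m\geq 1}\frac{1}{m!} d\ch_{(m)}(\Ga) + H\we \ch(\Ga),
\]
reindex the first sum via $m\mapsto m+1$, and match it term-by-term against the expansion of $H\we\ch(\Ga)$, with the rank contribution supplying the $m=0$ summand. The only nontrivial ingredient is the graded-cyclicity cancellation in the commutator piece; the remainder is standard bookkeeping, and I anticipate no serious obstacle, the argument being a direct twisted generalization of the ordinary Chern--Weil calculation.
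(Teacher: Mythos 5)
Your overall strategy---set $F_i=R_i-B_i\cdot\mathbf{1}$, use the Bianchi identity, kill the commutator terms by graded cyclicity of the trace, and let the central $H$-term survive---is the expected argument (the paper gives no proof of this proposition, deferring to \cite{BCMMS}), and your handling of the gluing, the telescoping, and the centrality of $B_i\cdot\mathbf{1}$ and $H\cdot\mathbf{1}$ is correct. The genuine problem is a sign, and it is not cosmetic, because it is exactly what makes the two halves of the statement fit together. From your own (correct) formula $dF_i=F_i\wedge\Gamma_i-\Gamma_i\wedge F_i-H\cdot\mathbf{1}$, the $H$-piece of $d\,\mathrm{tr}(F_i^m)$ is
\begin{equation*}
-\sum_{k=0}^{m-1}\mathrm{tr}\bigl(F_i^{k}\wedge (H\cdot\mathbf{1})\wedge F_i^{m-1-k}\bigr)=-m\,H\wedge \mathrm{tr}(F_i^{m-1}),
\end{equation*}
not $+m\,H\wedge\mathrm{tr}(F_i^{m-1})$ as you assert; so the per-degree identity your computation actually establishes is $d\ch_{(m)}(\Gamma)=-m\,\ch_{(m-1)}(\Gamma)\,H$. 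Your final step then inherits the inconsistency: substituting the $+$ version into $(d+H)\ch(\Gamma)=\sum_{m\geq 1}\tfrac{1}{m!}\,d\ch_{(m)}(\Gamma)+H\wedge\ch(\Gamma)$ and reindexing gives $2\,H\wedge\ch(\Gamma)$, so the ``term-by-term matching'' you describe does not cancel (and cannot be fixed by reordering, since $\ch_{(m-1)}(\Gamma)$ is even and $H$ is odd, so the two orderings agree). Cancellation, hence $(d+H)$-closedness, occurs precisely with the minus sign.

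The fix is simply to carry the minus sign through: $d\ch_{(m)}(\Gamma)=-m\,\ch_{(m-1)}(\Gamma)\,H$, and then $(d+H)\ch(\Gamma)=0$ follows exactly as you outline, with the rank term contributing nothing. Note that the displayed formula in the statement itself is off by this sign---as printed it would make $\ch(\Gamma)$ $(d-H)$-closed rather than $(d+H)$-closed---whereas the rest of the paper uses the minus-sign version, e.g.\ the proof of Proposition \ref{PRP.twisted.Chern-Weil} computes $\int_I d\ch(\widetilde\Gamma)=-\int_I\ch(\widetilde\Gamma)\wedge p^*H$. So your route is the right one, but as written it asserts an intermediate identity that contradicts your own expression for $dF_i$ and would make the concluding cancellation fail; state and use the identity with the minus sign.
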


\begin{prp}\label{PRP.ch.is.additive} The $m^{\trm{th}}$ twisted chern character form is additive for all $m\geq 0$, i.e., $$\ch_{(m)}(\Ga^E\dsum \Ga^F)=\ch_{(m)}(\Ga^E)+\ch_{(m)}(\Ga^F).$$
\end{prp}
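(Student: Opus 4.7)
The plan is to reduce the claim to two standard facts: (i) the curvature of a direct-sum connection is block-diagonal, and (ii) trace is additive on block-diagonal matrices. The key technical observation is that the scalar $2$-form $B_i$ appearing in the twisted Chern character interacts cleanly with the block decomposition.

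First I would fix notation and set $E = (\{g^E_{ji}\},\{\la_{kji}\})$ of rank $n$, $F = (\{g^F_{ji}\},\{\la_{kji}\})$ of rank $n'$, so that $E \dsum F$ has rank $n+n'$ and transition maps $g^E_{ji} \dsum g^F_{ji}$. The connection on $E\dsum F$ is $\Ga^E \dsum \Ga^F := \{\Ga^E_i \dsum \Ga^F_i\}_{i\in\La}$, which is straightforwardly checked to be compatible with $\widehat\la$ from the relation \eqref{EQN.twisted.connection.local.rel} applied separately to $E$ and $F$. A direct computation using $d(\Ga^E_i\dsum\Ga^F_i)=d\Ga^E_i\dsum d\Ga^F_i$ and the fact that the wedge product of block-diagonal matrix-valued forms is block-diagonal gives the curvature formula
\beqs
R^{E\dsum F}_i = R^E_i \dsum R^F_i.
\eeqs

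Next I would rewrite the twist term to match this block decomposition. Since $B_i$ is a scalar-valued $2$-form, one has
\beqs
B_i\cdot \tbf{1}_{n+n'} = (B_i\cdot \tbf{1}_n)\dsum (B_i\cdot \tbf{1}_{n'}),
\eeqs
and therefore
\beqs
R^{E\dsum F}_i - B_i\cdot\tbf{1}_{n+n'} = (R^E_i - B_i\cdot\tbf{1}_n)\dsum (R^F_i - B_i\cdot\tbf{1}_{n'}).
\eeqs
Taking the $m^{\textrm{th}}$ wedge-power preserves the block-diagonal structure (the off-diagonal blocks remain zero at every stage), so the $m^{\textrm{th}}$ power is $(R^E_i - B_i\cdot\tbf{1}_n)^m \dsum (R^F_i - B_i\cdot\tbf{1}_{n'})^m$. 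Applying the trace and using its additivity on block-diagonal matrices yields
\beqs
\ch_{(m)}(\Ga^E\dsum \Ga^F)(x) = \tr\bigl[(R^E_i - B_i\cdot\tbf{1}_n)^m\bigr] + \tr\bigl[(R^F_i - B_i\cdot\tbf{1}_{n'})^m\bigr] = \ch_{(m)}(\Ga^E)(x) + \ch_{(m)}(\Ga^F)(x)
\eeqs
on each $U_i$, and these identities of locally defined forms patch to a global identity by the preceding lemma. The case $m=0$ is the trivial identity $\textrm{rank}(E\dsum F) = \textrm{rank}(E) + \textrm{rank}(F)$.

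There is no real obstacle here; the only point worth care is the bookkeeping of which identity matrix multiplies $B_i$ in each summand, since the three terms $B_i\cdot\tbf{1}_{n+n'}$, $B_i\cdot\tbf{1}_n$, $B_i\cdot\tbf{1}_{n'}$ live in different matrix algebras. Once this is tracked, the proof is purely linear-algebraic.
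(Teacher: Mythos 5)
Your proof is correct: the block-diagonal computation of the curvature, the splitting $B_i\cdot\tbf{1}_{n+n'}=(B_i\cdot\tbf{1}_n)\dsum(B_i\cdot\tbf{1}_{n'})$, and additivity of the trace on block sums are exactly the points needed, and the $m=0$ case is handled properly. The paper in fact states this proposition without proof, and your argument is precisely the standard verification it leaves to the reader, so there is nothing to add or compare.
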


\begin{dfn} Let  $\widehat\la=(\{\la_{kji}\},\{A_{ji}\},\{B_i\})$ be as above, and $(E,\Ga)$ a $\la$-twisted vector bundle $(\cU,\{g_{ji}\},\{\la_{kji}\})$ of rank $n$ with a connection $\Ga$ compatible with $\widehat\la$. Let $f:(Y,\cV) \ra (X,\cU)$ be any map provided that $\cV=f^{-1}\cU$.  The \textbf{pullback} of $(E,\Ga)$ along $f$ is $f^*(E)$ together with the family $$f^*\Ga:=\{f^*\Ga_i\}_{i\in\La},$$ where $f^*\Ga_i\in\Om^1(f^{-1}(U_i);\mathfrak{u}(n))$ is defined by entrywise pullback.
\end{dfn}

\begin{prp} $f^*\Ga$ is a connection on the $(\la\circ f)$-twisted vector bundle $(\cV,\{g_{ji}\circ f\},\{\la_{kji}\circ f\})$ of rank $n$ compatible with $f^*\widehat\la=(\{\la_{kji}\circ f\},\{f^*A_{ji}\},\{f^*B_i\})$ and $f^*\ch(\Ga)=\ch(f^*\Ga).$
\end{prp}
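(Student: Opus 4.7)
The plan is to separate the statement into three small verifications and dispatch each by naturality of pullback: (a) $f^*\widehat\la$ is indeed a $U(1)$-gerbe with connection on $\cV$, (b) $f^*\Ga$ satisfies the compatibility relation \eqref{EQN.twisted.connection.local.rel} with $f^*\widehat\la$, and (c) the $m$-th twisted Chern character form is natural. The main tools are the standard facts that $f^*$ commutes with $d$, with $\we$, with block sums, and with entrywise operations such as $\tr$ and matrix multiplication, together with $(g_{ji}\circ f)^{-1}=g_{ji}^{-1}\circ f$ and $d(g_{ji}\circ f)=f^*(dg_{ji})$.

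For (a), I would just observe that applying $f^*$ to the Deligne cocycle identities \textbf{C1}--\textbf{C3} of Definition \ref{DFN.GerbeWithConnection} (and the completely normalized cocycle conditions) preserves them termwise, so $f^*\widehat\la=(\{\la_{kji}\circ f\},\{f^*A_{ji}\},\{f^*B_i\})$ is a $U(1)$-gerbe with connection on $\cV=f^{-1}\cU$. For (b), pulling back \eqref{EQN.twisted.connection.local.rel} by $f$ entrywise gives
\beqs
f^*\Ga_i-(g_{ji}\circ f)^{-1}(f^*\Ga_j)(g_{ji}\circ f)-(g_{ji}\circ f)^{-1}d(g_{ji}\circ f)=-f^*A_{ji}\cdot\tbf{1},
\eeqs
which is exactly the compatibility relation for $f^*\Ga$ on the $(\la\circ f)$-twisted vector bundle $f^*E$ with respect to $f^*\widehat\la$. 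This also confirms that each $f^*\Ga_i$ is an $\f{u}(n)$-valued $1$-form on $f^{-1}(U_i)$.

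For (c), I would first compute the curvature: using $d\circ f^*=f^*\circ d$ and $f^*(\al\we\be)=f^*\al\we f^*\be$,
\beqs
R^{f^*\Ga}_i=d(f^*\Ga_i)+f^*\Ga_i\we f^*\Ga_i=f^*(d\Ga_i+\Ga_i\we \Ga_i)=f^*R_i.
\eeqs
Since $f^*$ is a ring map on forms and commutes with the trace of products of matrix-valued forms, for each $m\in\Z^+$ and $x\in f^{-1}(U_i)$,
\beqs
\ch_{(m)}(f^*\Ga)(x)=\tr\bigl(f^*R_i-f^*B_i\cdot\tbf{1}\bigr)^m(x)=f^*\tr(R_i-B_i\cdot\tbf{1})^m(x)=f^*\ch_{(m)}(\Ga)(x).
\eeqs
The rank of $f^*E$ equals that of $E$, so the $m=0$ term matches; summing the series gives $f^*\ch(\Ga)=\ch(f^*\Ga)$.

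No step is a real obstacle; the only things to be careful with are bookkeeping of matrix-valued versus scalar-valued forms and the fact that $f^*$ passes through $\tr$ and through the binomial expansion used in the preceding lemma. Once those are noted, the proposition is purely formal from the naturality of the de Rham operations.
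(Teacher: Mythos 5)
Your verification is correct; the paper actually states this proposition without proof, treating it as routine, and your argument (pullback preserves the Deligne relations and the compatibility equation, and commutes with $d$, $\we$, and $\tr$, so curvature and hence each $\ch_{(m)}$ pulls back) is precisely the standard check the paper leaves implicit. Nothing is missing.
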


\begin{prp}\label{PRP.naturality.ch.bundleisom}
Let  $\widehat\la=(\{\la_{kji}\},\{A_{ji}\},\{B_i\})$ be as above and $\vph:E\ra F$ an isomorphism of  $\la$-twisted vector bundles over $X$ with respect to the same open cover $\cU$.  Let $\Ga^E$ be a connection on $E$ associated with $\widehat\la$ and $\Ga^F$ a connection on $F$ associated with $\widehat\la$. Then $\ch(\Ga^F)=\ch(\vph^*\Ga^F)$.
\end{prp}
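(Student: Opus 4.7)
The plan is to define $\vph^* \Gamma^F$ locally via the classical gauge transformation formula and then compare the two Chern character forms patchwise. On each chart $U_i$, I would set
\[ (\vph^* \Gamma^F)_i := \vph_i^{-1} \Gamma^F_i \vph_i + \vph_i^{-1} d\vph_i. \]
Writing $E = (\cU,\{g_{ji}\},\{\la_{kji}\})$ and $F = (\cU,\{h_{ji}\},\{\la_{kji}\})$, the intertwining relation $\vph_j g_{ji} = h_{ji} \vph_i$ (together with $h_{ji}^{-1}dh_{ji} = \vph_i g_{ji}^{-1}\vph_j^{-1}(d\vph_j)g_{ji}\vph_i^{-1} + \vph_i g_{ji}^{-1}dg_{ji}\vph_i^{-1} - (d\vph_i)\vph_i^{-1}$) should let me verify, by essentially the same manipulation as for ordinary untwisted bundles, that $\{(\vph^* \Gamma^F)_i\}_{i\in\La}$ satisfies \eqref{EQN.twisted.connection.local.rel} with the cocycle $\{g_{ji}\}$; this works because $\{g_{ji}\}$ and $\{h_{ji}\}$ share the same $\la_{kji}$ and the same $A_{ji}$'s, so all twist-dependent terms cancel identically.

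The heart of the argument is then a short local computation. The standard identity for gauge-transformed connections gives the tensorial transformation of the curvature,
\[ R^{\vph^* \Gamma^F}_i \;=\; \vph_i^{-1} R^{\Gamma^F}_i \vph_i, \]
and since $B_i\cdot \tbf{1}$ is a scalar matrix, hence central, one has
\[ R^{\vph^* \Gamma^F}_i - B_i\cdot \tbf{1} \;=\; \vph_i^{-1}\bigl(R^{\Gamma^F}_i - B_i\cdot \tbf{1}\bigr)\vph_i. \]
Taking the $m$-th wedge power and then the trace, cyclic invariance yields $\tr\bigl((R^{\vph^* \Gamma^F}_i - B_i\cdot \tbf{1})^m\bigr) = \tr\bigl((R^{\Gamma^F}_i - B_i\cdot \tbf{1})^m\bigr)$ on each $U_i$. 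Since both sides have already been shown to glue to globally defined forms on $X$, this gives $\ch_{(m)}(\vph^* \Gamma^F) = \ch_{(m)}(\Gamma^F)$ for every $m \geq 1$. Combined with $\ch_{(0)}(\vph^* \Gamma^F) = \trm{rank}(E) = \trm{rank}(F) = \ch_{(0)}(\Gamma^F)$, summing over $m$ delivers $\ch(\vph^* \Gamma^F) = \ch(\Gamma^F)$.

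The only real obstacle is the setup step: confirming that $\vph^* \Gamma^F$ genuinely satisfies the twisted compatibility relation \eqref{EQN.twisted.connection.local.rel} with respect to the transition data of $E$. This is mechanical but requires careful bookkeeping in the order of $\vph_i$, $g_{ji}$, and $h_{ji}$, along with the identity $\vph_i\, d(\vph_i^{-1}) = -(d\vph_i)\vph_i^{-1}$. Once that is in place, the remainder is a routine consequence of the conjugation-invariance of the trace of matrix powers and the centrality of $B_i\cdot \tbf{1}$, and note that the hypothesis on $\Gamma^E$ plays no role in the stated conclusion.
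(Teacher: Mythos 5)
Your proposal is correct, and it is exactly the argument the paper implicitly relies on (the proposition is stated without proof as routine): $\vph^*\Gamma^F$ defined by the gauge-transformation formula satisfies \eqref{EQN.twisted.connection.local.rel} for the transition data of $E$ because the $\la$- and $A_{ji}$-dependent terms are identical for $E$ and $F$, the curvature transforms by conjugation $R^{\vph^*\Gamma^F}_i=\vph_i^{-1}R^{\Gamma^F}_i\vph_i$, and centrality of $B_i\cdot\tbf{1}$ plus cyclicity of the trace give $\ch_{(m)}(\vph^*\Gamma^F)=\ch_{(m)}(\Gamma^F)$ for all $m$. Your observation that $\Gamma^E$ plays no role in the stated equality is also accurate; it only enters when the proposition is combined with Proposition \ref{PRP.twisted.Chern-Weil} to compare $\ch(\Gamma^E)$ and $\ch(\Gamma^F)$ in twisted cohomology.
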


\begin{rmk}
We shall prove in Proposition \ref{PRP.twisted.Chern-Weil} that the total \emph{twisted Chern character} in twisted de Rham cohomology group is independent of the choice of connection.
\end{rmk}

\begin{prp}\label{PRP.naturality.of.twist.chern.char.the.same}
Let $\widehat\la=(\{\la_{kji}\},\{A_{ji}\},\{B_i\})$ and $\widehat\la'=(\{\la'_{kji}\},\{A'_{ji}\},\{B'_i\})$ be two $U(1)$-gerbes with connection defined on an open cover $\cU=\{U_i\}_{i\in\La}$ over $X$. Suppose $\widehat\la$ and $\widehat\la'$ are cohomologous as Deligne $2$-cocycles such that $\widehat\la'=\widehat\la+D\widehat\al$, where $\widehat\al=(\{\chi_{ji}\},\{\Pi_i\})\in \check C^1(\cU,\Om^1)$. (See Remark \ref{RMK.U1gerbe.and.curvatureform} for the definition of $D$.) Let $E=(\cU,\{g_{ji}\},\{\la_{kji}\})$ be a $\la$-twisted vector bundle of rank $n$ and $\Ga=\{\Ga_i\}_{i\in\La}$ a connection on $E$ compatible with $\widehat\la$. Define a $\la'$-twisted vector bundle $E'$ with connection $\Ga'$ compatible with $\widehat\la'$ by \beqs
E' &:= (\cU,\chi_{ji}g_{ji},\la'_{kji})\\
\Ga' &:= \{\Ga'_i\}_{i\in\La},\trm{ where }\Ga'_i:=\Ga_i+\Pi_i\cdot\tbf{1}.\\
\eeqs Then $\ch(\Ga)=\ch(\Ga').$
\end{prp}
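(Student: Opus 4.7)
The plan is to show the stronger statement that $\ch_{(m)}(\Ga)=\ch_{(m)}(\Ga')$ locally as differential forms on each $U_i$, from which the equality of total twisted Chern character forms is immediate (the ranks of $E$ and $E'$ agree by construction). Since by the lemma following Definition \ref{DEF.curvature.twisted.case} these local forms glue to global forms on $X$, it suffices to compare them on each chart.

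First I would unpack the Deligne coboundary relation $\widehat\la'=\widehat\la+D\widehat\al$. With $D=d+(-1)^q\de$ as specified in Remark \ref{RMK.U1gerbe.and.curvatureform}, applying $D$ to $\widehat\al=(\{\chi_{ji}\},\{\Pi_i\})\in\check C^1(\cU,\Om^1)$ produces, in the three pieces of the Deligne $2$-cocycle:
\beq
\la'_{kji}&=\la_{kji}\,\chi_{ji}\chi_{ik}\chi_{kj},\\
A'_{ji}&=A_{ji}+\chi_{ji}^{-1}d\chi_{ji}+\Pi_j-\Pi_i,\\
B'_i&=B_i+d\Pi_i.
\eeq
The single relation I actually need is the last one, $B'_i=B_i+d\Pi_i$. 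As a warmup sanity check I would verify that $\Ga'$ is indeed a connection on $E'$ compatible with $\widehat\la'$ in the sense of Definition \ref{DEF.twisted.connection}: substituting $g'_{ji}=\chi_{ji}g_{ji}$ and $\Ga'_i=\Ga_i+\Pi_i\cdot\tbf{1}$ into \eqref{EQN.twisted.connection.local.rel}, the scalar factors $\chi_{ji}$ pass through all conjugations, and one collects exactly the extra terms $(\Pi_i-\Pi_j-\chi_{ji}^{-1}d\chi_{ji})\cdot\tbf{1}$ needed to match $-A'_{ji}\cdot\tbf{1}$.

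The core step is then the curvature computation. From $\Ga'_i=\Ga_i+\Pi_i\cdot\tbf{1}$,
\beqs
R'_i &= d\Ga'_i+\Ga'_i\we\Ga'_i \\
&= R_i+d\Pi_i\cdot\tbf{1}+\Pi_i\we\Ga_i+\Ga_i\we(\Pi_i\cdot\tbf{1})+(\Pi_i\we\Pi_i)\cdot\tbf{1}.
\eeqs
Because $\Pi_i$ is a scalar $1$-form, $\Pi_i\we\Pi_i=0$, and for any matrix-valued $1$-form $\Ga_i$ one checks entrywise that $\Pi_i\we\Ga_i+\Ga_i\we\Pi_i=0$ by the graded commutativity of $\we$ on $1$-forms. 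Thus $R'_i=R_i+d\Pi_i\cdot\tbf{1}$, and combining with $B'_i=B_i+d\Pi_i$ yields the pointwise identity
\[ R'_i-B'_i\cdot\tbf{1}=R_i-B_i\cdot\tbf{1}. \]

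Applying $\tr[(\,\cdot\,)^m]$ gives $\ch_{(m)}(\Ga')=\ch_{(m)}(\Ga)$ for every $m\in\Z^+$, and summing over $m$ (with rank of $E'$ equal to rank of $E$) yields $\ch(\Ga')=\ch(\Ga)$. The only potential obstacle is bookkeeping: one must be careful with the sign conventions in the Deligne differential $D=d+(-1)^q\de$ to derive the correct relation $B'_i=B_i+d\Pi_i$, and one must remember that the cancellation $R'_i-B'_i\cdot\tbf{1}=R_i-B_i\cdot\tbf{1}$ is a genuine equality of forms (not merely of cohomology classes), which is why we obtain equality of the twisted Chern character forms rather than only the twisted Chern characters in $H_H^{\even}(X)$.
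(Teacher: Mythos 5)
Your proof is correct and follows essentially the same route as the paper: compute $R'_i=R_i+d\Pi_i\cdot\tbf{1}$ (the cross terms cancelling since $\Pi_i$ is a scalar $1$-form) and combine with $B'_i-B_i=d\Pi_i$ to get the termwise equality $R'_i-B'_i\cdot\tbf{1}=R_i-B_i\cdot\tbf{1}$, hence $\ch_{(m)}(\Ga')=\ch_{(m)}(\Ga)$ for all $m$. The extra bookkeeping you include (the Deligne coboundary components and the compatibility check for $\Ga'$) is consistent with what the paper records elsewhere.
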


\begin{rmk} Since $\widehat\la$ and $\widehat\la'$ are cohomologous, their $3$-curvatures are the same.
\end{rmk}

\begin{proof}[Proof of Proposition \ref{PRP.naturality.of.twist.chern.char.the.same}] From \beqs
R_i'=d\Ga'_i+\Ga'_i\we\Ga'_i&=d(\Ga_i+\Pi_i\cdot\tbf{1})+(\Ga_i+\Pi_i\cdot\tbf{1})\we(\Ga_i+\Pi_i\cdot\tbf{1})\\
&=d\Ga_i+d\Pi_i\cdot\tbf{1}+\Ga_i\we\Ga_i+\Ga_i\we\Pi_i\cdot\tbf{1}+\Pi_i\cdot\tbf{1}\we\Ga_i+\Pi_i\cdot\tbf{1}\we\Pi_i\cdot\tbf{1}\\
&=R_i+d\Pi_i\cdot\tbf{1},
\eeqs it follows that \beqs
\ch_{(m)}(E,\Ga)-\ch_{(m)}(E',\Ga')&=\tr(R_i-B_i\cdot\tbf{1})^m-\tr(R'_i-B'_i\cdot\tbf{1})^m=0
\eeqs since $B_i'-B_i=d\Pi_i$.
\end{proof}

\begin{nta}\label{NTA.change.of.curving.by.a.global.2.form} Given $\widehat\la$ and $\xi\in\Om^2(X;i\R)$, we denote by $\widehat\la_\xi$ the $U(1)$-gerbe with connection $(\{\la_{kji}\},\{A_{ji}\},\{B_i+\xi|_{U_i}\})$. Let $E$ be a $\la$-twisted vector bundle and $\Ga=\{\Ga_i\}_{i\in\La}$ be a connection on $E$ associated with $\widehat\la$. We denote the same family $\{\Ga_i\}_{i\in\La}$ on $E$ that is  associated with $\widehat\la_\xi$ as a connection on $E$ by $\Ga_\xi$. We also denote $\xi|_{U_i}$ by $\xi_i$.
\end{nta}

\begin{prp}\label{PRP.change.of.ch.under.change.of.gerbeconnection2forms} Let  $\widehat\la=(\{\la_{kji}\},\{A_{ji}\},\{B_i\})$ be as above, $E$ a $\la$-twisted vector bundle, $\Ga$ a connection on $E$, and $\xi\in\Om^2(X;i\R)$. Then $\ch(\Ga_{-\xi})=\ch(\Ga)\we\exp(\xi).$
\end{prp}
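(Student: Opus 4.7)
The key observation is that the curvature two-form $R_i=d\Ga_i+\Ga_i\wedge\Ga_i$ depends only on the local connection one-forms, not on the curving of the gerbe. So the collection $\{R_i\}$ associated to $\Ga_{-\xi}$ agrees with the one associated to $\Ga$; only the curving shifts, from $B_i$ to $B_i-\xi_i$. My plan is to expand $\tr(R_i-(B_i-\xi_i)\cdot\tbf{1})^m$ using the binomial theorem, using that $\xi_i\cdot\tbf{1}$ is a central element (scalar two-form times the identity matrix) that commutes with any matrix-valued form, and that distinct two-forms commute with each other.

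Concretely, on each patch $U_i$,
\beq
\ch_{(m)}(\Ga_{-\xi})|_{U_i}
&= \tr\bigl((R_i-B_i\cdot\tbf{1})+\xi_i\cdot\tbf{1}\bigr)^m
= \sum_{r=0}^m \binom{m}{r}\,\tr\bigl[(R_i-B_i\cdot\tbf{1})^{m-r}\bigr]\,\xi_i^{\,r}\\
&= \sum_{r=0}^m \binom{m}{r}\,\ch_{(m-r)}(\Ga)|_{U_i}\,\xi^{\,r}\big|_{U_i},
\eeq
where I pulled the scalar two-form $\xi_i^r$ out of the trace and used that the $\xi_i$ are the restrictions of the global form $\xi$. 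The resulting local expressions clearly glue to a global identity $\ch_{(m)}(\Ga_{-\xi})=\sum_{r=0}^m\binom{m}{r}\ch_{(m-r)}(\Ga)\we\xi^r$, consistent with the lemma that $\tr[(R_i-B_i\cdot\tbf{1})^m]$ defines a global form (the case $m=0$ is handled by the convention $\ch_{(0)}=\rk E$).

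Finally, I plug into the definition of the total Chern character and recognize a Cauchy product:
\beq
\ch(\Ga_{-\xi})
&= \sum_{m=0}^\infty \frac{1}{m!}\sum_{r=0}^m \binom{m}{r}\,\ch_{(m-r)}(\Ga)\we\xi^r
= \sum_{m=0}^\infty \sum_{r=0}^m \frac{\ch_{(m-r)}(\Ga)}{(m-r)!}\we\frac{\xi^r}{r!}\\
&= \left(\sum_{s=0}^\infty \frac{1}{s!}\ch_{(s)}(\Ga)\right)\we\left(\sum_{r=0}^\infty \frac{\xi^r}{r!}\right)
= \ch(\Ga)\we\exp(\xi),
\eeq
which is the claim. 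There is no real obstacle here beyond bookkeeping: the proof is essentially the observation that shifting the curving by a global two-form amounts to multiplying the Chern character series by the exponential of that form, exactly as in the untwisted identity $\tr\exp(R-B\cdot\tbf{1}+\xi\cdot\tbf{1})=\tr\exp(R-B\cdot\tbf{1})\,e^\xi$ that one would write formally in even characteristic classes.
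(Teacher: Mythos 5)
Your proof is correct and follows essentially the same route as the paper: binomial expansion of $\tr\left[(R_i-B_i\cdot\tbf{1}+\xi_i\cdot\tbf{1})^m\right]$ on each patch, using that $\xi_i\cdot\tbf{1}$ is central, followed by the Cauchy-product rearrangement identifying $\exp(\xi)$. The added remarks on gluing and on $R_i$ being unchanged under the shift of curving are consistent with, and implicit in, the paper's computation.
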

\begin{proof}\beqs
\ch(\Ga_{-\xi})&=\sum_{m=0}^\infty\frac{1}{m!}\ch_{(m)}(\Ga_{-\xi})=\sum_{m=0}^\infty\frac{1}{m!}\tr\left[\left(R_i-B_i\cdot\tbf{1}+\xi_i\cdot\tbf{1}\right)^m\right]\\&=\sum_{m=0}^\infty\frac{1}{m!}\tr\left(\sum_{r=0}^m {{m} \choose {r}} (R_i-B_i\cdot\tbf{1})^{m-r}\xi_i^r\cdot\tbf{1}\right)
\\&=\sum_{m=0}^\infty\frac{1}{m!}\sum_{r=0}^m\left( {{m} \choose {r}}  \tr(R_i-B_i\cdot\tbf{1})^{m-r}\right)\we\xi_i^r\\&=\sum_{m=0}^\infty\frac{1}{m!}\sum_{r=0}^m\left(\frac{m!}{(m-r)!r!} \ch_{(m-r)}(\Ga)\right)\we\xi_i^r=\sum_{m=0}^\infty\sum_{r=0}^m\frac{\ch_{(m-r)}(\Ga)}{(m-r)!}\we\frac{\xi_i^r}{r!}\\
&=\sum_{m=0}^\infty\sum_{r=0}^\infty\frac{\ch_{(m)}(\Ga)}{(m)!}\we\frac{\xi_i^r}{r!}\quad\trm{since $\sum_{m=0}^\infty\sum_{r=0}^m a_{r,m-r}=\sum_{m=0}^\infty\sum_{r=0}^\infty a_{r,m}$}\\&=\ch(\Ga)\we\exp(\xi).
\eeqs
\end{proof}

Now we discuss the Chern-Simons transgression form in the twisted case.

\begin{lem}\label{LEM.path.of.connection} Let  $\widehat\la=(\{\la_{kji}\},\{A_{ji}\},\{B_i\})$ be as above and $\Ga_0$ and $\Ga_1$ be connections on a $\la$-twisted vector bundle $E=(\cU,\{g_{ji}\},\{\la_{kji}\})$ such that both are compatible with $\widehat\la$. Then for each $t\in \R$, $$\Ga_t:=(1-t)\Ga_0+t\Ga_1$$ is a connection on $E$ compatible with $\widehat\la$, i.e., the space of $\widehat\la$-compatible connections on $E$ is an affine space modelled over $\Om^1(X;\trm{End}(E))$.
\end{lem}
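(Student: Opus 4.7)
The plan is to reduce the lemma to two direct verifications, both exploiting the fact that the defining relation \eqref{EQN.twisted.connection.local.rel} is affine-linear in $\Ga$ with the inhomogeneity $-A_{ji}\cdot\tbf{1}+g_{ji}^{-1}dg_{ji}$ depending only on $\widehat\la$ and the twisted bundle $E$, not on the connection. Since the map $\Ga \mapsto \Ga_i - g_{ji}^{-1}\Ga_j g_{ji}$ is $\R$-linear, a convex (indeed, any affine) combination of two solutions of \eqref{EQN.twisted.connection.local.rel} is again a solution, and the space of solutions is a torsor over the kernel of this linear map acting on globally defined tuples, which I will identify with $\Om^1(X;\End(E))$.

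First I would verify compatibility of $\Ga_t$ directly. For each $t\in\R$ and each pair $i,j$, write
\[
\Ga_{t,i} - g_{ji}^{-1}\Ga_{t,j}g_{ji} - g_{ji}^{-1}dg_{ji}
= (1-t)\bigl[\Ga_{0,i} - g_{ji}^{-1}\Ga_{0,j}g_{ji} - g_{ji}^{-1}dg_{ji}\bigr]
+ t\bigl[\Ga_{1,i} - g_{ji}^{-1}\Ga_{1,j}g_{ji} - g_{ji}^{-1}dg_{ji}\bigr],
\]
since the inhomogeneous term $g_{ji}^{-1}dg_{ji}$ is killed by the coefficient $(1-t)+t=1$. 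Applying \eqref{EQN.twisted.connection.local.rel} to both $\Ga_0$ and $\Ga_1$ gives $(1-t)(-A_{ji}\cdot\tbf{1})+t(-A_{ji}\cdot\tbf{1})=-A_{ji}\cdot\tbf{1}$, as required. This already proves that $\Ga_t$ is a connection on $E$ compatible with $\widehat\la$.

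Next, for the affine-space statement, I would set $\eta_i := \Ga_{1,i}-\Ga_{0,i} \in \Om^1(U_i;\mathfrak{u}(n))$. Subtracting the relation \eqref{EQN.twisted.connection.local.rel} for $\Ga_0$ from that for $\Ga_1$, the $-A_{ji}\cdot\tbf{1}$ and the $g_{ji}^{-1}dg_{ji}$ terms cancel, leaving the homogeneous cocycle condition $\eta_i = g_{ji}^{-1}\eta_j g_{ji}$ on $U_{ij}$. Since the transition functions of $E$ satisfy $g_{kj}g_{ji} = g_{ki}\la_{kji}$ and the scalars $\la_{kji}$ act trivially by conjugation, the local data $\{\eta_i\}$ patches together to a globally defined section of $\Om^1(X;\End(E))$. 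Conversely, given any $\Ga$ compatible with $\widehat\la$ and any $\eta\in\Om^1(X;\End(E))$, the family $\{\Ga_i+\eta|_{U_i}\}$ again satisfies \eqref{EQN.twisted.connection.local.rel}, because $\eta$ transforms homogeneously and the inhomogeneous piece is unchanged. Combined with the existence of at least one $\widehat\la$-compatible connection (cited from Karoubi), this exhibits the set of such connections as a torsor over $\Om^1(X;\End(E))$.

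There is no serious obstacle; the only point that might warrant a remark is the identification of the homogeneous solution space with $\Om^1(X;\End(E))$, where one uses that $\End(E)$ is an honest (untwisted) bundle on $X$ with transition functions $\Ad(g_{ji})$ — the twisting cocycle $\la_{kji}$ cancels out of the associator because $\End$ absorbs the central $U(1)$-scalar, making $\{\Ad(g_{ji})\}$ a genuine $\check{\rm C}$ech $1$-cocycle.
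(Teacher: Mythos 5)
Your proof is correct and follows essentially the same route as the paper: verify the affine combination satisfies the compatibility relation \eqref{EQN.twisted.connection.local.rel} by linearity (with $(1-t)+t=1$ handling the inhomogeneous terms), then observe that the difference of two compatible connections transforms homogeneously by $\Ad(g_{ji})$ and hence defines a global section of $\Om^1(X;\trm{End}(E))$, the centrality of $\la_{kji}$ making $\trm{End}(E)$ an ordinary bundle. Your added remarks on the converse direction and on why the twist cancels in $\End(E)$ are details the paper leaves implicit, but the argument is the same.
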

\begin{proof}\beqs  {\Ga_t}_i-g_{ji}^{-1}{\Ga_t}_jg_{ji}-g_{ji}^{-1}dg_{ji}&=(1-t){\Ga_0}_i+t{\Ga_1}_i-g_{ji}^{-1}\big((1-t){\Ga_0}_j+t{\Ga_1}_j\big)g_{ji}-g_{ji}^{-1}dg_{ji}\\
&= (1-t){\Ga_0}_i-(1-t)g_{ji}^{-1}{\Ga_0}_jg_{ji}-(1-t)g^{-1}_{ji}dg_{ji}\\&\qquad+t{\Ga_1}_i-tg_{ji}^{-1}{\Ga_1}_jg_{ji}-tg^{-1}_{ji}dg_{ji}\\
&=-(1-t)A_{ji}\cdot\tbf{1}-tA_{ji}\cdot\tbf{1}=-A_{ji}\cdot\tbf{1}.
\eeqs
If $\Ga$ and $\Ga'$ are two different $\widehat\la$-compatible connections on $E$, we have $\Ga_i-\Ga_i'=g_{ji}^{-1}(\Ga_j-\Ga_j')g_{ji}$, so the space of $\widehat\la$-compatible connections on $E$ is an affine space modelled over $\Om^1(X;\trm{End}(E))$. Notice that $\trm{End}(E)$ is an ordinary vector bundle.
\end{proof}

\begin{cor}\label{COR.path.of.bigon} Let $\Ga_0$ and $\Ga_1$ be as in Lemma \ref{LEM.path.of.connection}. Let $\al_t$ and $\ga_t$ with $t\in I$ be two paths of connections each starting at $\Ga_0$ and ending at $\Ga_1$ and both $\al_t$ and $\ga_t$ are compatible with $\widehat\la$ for all $t\in I$. Then there exists a bigon (a polygon with two sides) of connections with edges $\al_t$ and $\ga_t$ such that every point on the bigon is a $\widehat\la$-compatible connection on $E$.
\end{cor}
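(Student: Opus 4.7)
The plan is to produce the bigon as an explicit two-parameter family of connections built by convex combination, and to use Lemma \ref{LEM.path.of.connection} to see that compatibility with $\widehat\la$ is preserved. The content is almost immediate once one notices that the square $[0,1]^2$ with its top and bottom edges collapsed is topologically a bigon.

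First I would parametrize the bigon by $(s,t)\in [0,1]\times[0,1]$, thinking of $t$ as the longitudinal coordinate from $\Ga_0$ at $t=0$ to $\Ga_1$ at $t=1$, and $s$ as the transverse coordinate interpolating between the edge $\al_t$ at $s=0$ and the edge $\ga_t$ at $s=1$. Then I would define
\begin{equation*}
\Ga_{s,t} := (1-s)\al_t + s\ga_t.
\end{equation*}
The edge conditions $\Ga_{0,t}=\al_t$ and $\Ga_{1,t}=\ga_t$ are clear, and at the endpoints $t=0,1$ the family degenerates to the single connections $\Ga_0$ and $\Ga_1$ respectively (since $\al_0=\ga_0=\Ga_0$ and $\al_1=\ga_1=\Ga_1$), which collapses the top and bottom of the square and realizes the bigon.

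Next I would verify that each $\Ga_{s,t}$ is $\widehat\la$-compatible. By Lemma \ref{LEM.path.of.connection}, the set of $\widehat\la$-compatible connections on $E$ is an affine space modelled on $\Om^1(X;\End(E))$. Since both $\al_t$ and $\ga_t$ lie in this affine space for every $t$, their convex combination $(1-s)\al_t+s\ga_t$ does as well; one can verify this directly using the defining relation \eqref{EQN.twisted.connection.local.rel} exactly as in the proof of Lemma \ref{LEM.path.of.connection} with $\al_t,\ga_t$ in place of $\Ga_0,\Ga_1$.

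There is no real obstacle here: the main point is just recognizing that affineness, already proven in Lemma \ref{LEM.path.of.connection}, extends from convex combinations of pairs of points to convex combinations of pairs of paths, and that such a family fills out a bigon once one quotients the degenerate endpoints. No extra smoothness considerations arise because the parametrizations of $\al_t$ and $\ga_t$ are smooth by assumption and the convex combination is smooth in $(s,t)$.
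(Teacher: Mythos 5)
Your proposal is correct and coincides with the paper's argument: the paper also defines the bigon as the family $(1-s)\al_t+s\ga_t$ and invokes Lemma \ref{LEM.path.of.connection} to see that each such convex combination is $\widehat\la$-compatible. The extra remarks on the collapsed endpoints of the square are a fine clarification but not a different method.
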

\begin{proof} By Lemma \ref{LEM.path.of.connection}, for each fixed $t\in I$ and $s\in I$, the connection $(1-s)\al_t+s\ga_t$ is $\widehat\la$-compatible.
\end{proof}

\begin{nta} We shall denote  the projection map $X\times I\ra X$ onto the first factor by $p$.
\end{nta}

\begin{lem} Let  $\widehat\la=(\{\la_{kji}\},\{A_{ji}\},\{B_i\})$ be as above,  $E$ a $\la$-twisted vector bundle $(\cU,\{g_{ji}\},\{\la_{kji}\})$, and $\Ga_t$ a connection on $E$ compatible with $\widehat\la$ for each $t\in I$. Then the family $\{\wtl\Ga_i\}_{i\in \La}$ defined by $\wtl\Ga_i(x,t):=(p^*\Ga_t)(x,t)$ is a connection on  the $(\la\circ p)$-twisted vector bundle $p^*E=(\cU\times I,\{g_{ji}\circ p\},\{\la_{kji}\circ p\})$ compatible with the pull-back $U(1)$-gerbe with connection $p^*\widehat\la=(\{\la_{kji}\circ p\},\{p^*A_{ji}\},\{p^*B_i\}).$
\end{lem}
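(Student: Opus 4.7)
The plan is to verify the defining relation of Definition \ref{DEF.twisted.connection} pointwise on $X \times I$, using the fact that this relation holds for $\Gamma_t$ at every fixed $t$.

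First I would unpack the definition $\wtl\Ga_i(x,t) := (p^*\Ga_t)(x,t)$. Writing a tangent vector at $(x,t) \in U_i \times I$ as $(v,\tau) \in T_x U_i \oplus T_t I$, this prescription means
\[
\wtl\Ga_i(x,t)(v,\tau) = (\Ga_t)_i(x)(v),
\]
so $\wtl\Ga_i$ is horizontal with respect to the $I$-factor (it has no $dt$-component) and its restriction to each slice $X \times \{t\}$ is the ordinary pullback of $(\Ga_t)_i$.

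Next I would compute each term of the compatibility equation \eqref{EQN.twisted.connection.local.rel} for $(p^*E, \wtl\Ga)$ over $U_{ij} \times I$. Since $(g_{ji} \circ p)(x,t) = g_{ji}(x)$, we have $d(g_{ji} \circ p)(x,t)(v,\tau) = dg_{ji}(x)(v)$, which again has no $dt$-component. Therefore, evaluated on $(v,\tau)$,
\[
\bigl(\wtl\Ga_i - (g_{ji}\circ p)^{-1} \wtl\Ga_j (g_{ji}\circ p) - (g_{ji}\circ p)^{-1} d(g_{ji}\circ p)\bigr)(x,t)(v,\tau)
\]
equals
\[
\bigl((\Ga_t)_i(x) - g_{ji}(x)^{-1} (\Ga_t)_j(x) g_{ji}(x) - g_{ji}(x)^{-1} dg_{ji}(x)\bigr)(v).
\]
Since $\Ga_t$ is $\widehat\la$-compatible for every $t$, the bracketed expression equals $-A_{ji}(x)\cdot \tbf{1}(v)$, which in turn is $-(p^*A_{ji})(x,t)(v,\tau)\cdot \tbf{1}$. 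This is exactly the required identity
\[
\wtl\Ga_i - (g_{ji}\circ p)^{-1}\wtl\Ga_j(g_{ji}\circ p) - (g_{ji}\circ p)^{-1} d(g_{ji}\circ p) = -(p^*A_{ji})\cdot \tbf{1}
\]
for compatibility with $p^*\widehat\la$.

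There is no genuine obstacle; the only subtlety worth flagging is that $\wtl\Ga$ is \emph{not} the pullback of a single connection on $E$, but rather a $t$-dependent family assembled into a global 1-form on $X \times I$. Smoothness of $\wtl\Ga_i$ in $(x,t)$ follows from smoothness of the family $t \mapsto \Ga_t$, so $\wtl\Ga$ is indeed a bona fide connection in the sense of Definition \ref{DEF.twisted.connection}.
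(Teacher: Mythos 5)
Your verification is correct, and the key observation — that $\wtl\Ga_i$ and $d(g_{ji}\circ p)$ have no $dt$-component, so the compatibility relation \eqref{EQN.twisted.connection.local.rel} for $p^*\widehat\la$ reduces slicewise to the compatibility of $\Ga_t$ with $\widehat\la$ — is exactly the routine check the paper leaves implicit, since it states this lemma without proof. Nothing further is needed beyond the smoothness-in-$t$ remark you already make.
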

%%%%%%%%%%%%%%%%%%%%%%%% Removal of proof %%%%%%%%%%%%%%
%\begin{proof} \beqs &\wtl\Ga_i(x,t)-\big(g_{ji}\circ p(x,t)\big)^{-1} \wtl\Ga_j(x,t)\big(g_{ji}\circ p(x,t)\big)-\big(g_{ji}\circ p(x,t)\big)^{-1}d\big(g_{ji}\circ p\big)(x,t)\\&=\big(\Ga_t(x)-(g_{ji}(x))^{-1}\Ga_j(x)g_{ji}(x)-(g_{ji}(x))^{-1}dg_{ji}(x)\big){(p_*)}_{(x,t)}\\
%&=-(A_{ji}(x){(p_*)}_{(x,t)})\cdot\tbf{1}=-p^*A_{ji}(x,t)\cdot\tbf{1},
%\eeqs\end{proof}
%%%%%%%%%%%%%%%%%%%%%%%% END OF Removal of proof %%%%%%%%%%%%%%

We refer the reader to Bott and Tu \cite{BT} for an account of the integration along the fiber.

\begin{dfn}\label{DFN.TwistedChernSimons} Let  $\widehat\la=(\{\la_{kji}\},\{A_{ji}\},\{B_i\})$ be as above, $E$ a $\la$-twisted vector bundle over $X$, and $\ga:t\mapsto \Ga_t$ a path of connections on $E$ such that each $\Ga_t$ is compatible with $\widehat\la$. The \textbf{twisted Chern-Simons form} of $\ga$ is the integration along the fiber: $$\trm{cs}(\ga):=\int_I \ch(\wtl\Ga)\in \Om^{\trm{odd}}(X;\C),$$
where $\wtl\Ga$ is the connection on $p^*E$ defined by  $\wtl\Ga(x,t)=(p^*\Ga_t)(x,t)$.
\end{dfn}

The following lemma is certainly well-known, but we did not find a reference.
\begin{lem}\label{LEM.stokes.for.int.along.fiber}
Let $E$ be a smooth fiber bundle over $X$ with fiber $F$ a compact oriented smooth $k$-manifold with corners. Let $\int_F:\Om^\bl(E;\C)\ra \Om^{\bl-k}(X;\C)$ be the integration along the fiber map and $\om\in \Om^n(E;\C)$ for $n\geq k$. Then \beq\label{EQN.Stokes.Int.along.fiber}
d\int_F \om=\int_F d\om+(-1)^{n-k}\int_{\ptl F}\om.\eeq
\end{lem}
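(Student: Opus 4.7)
The plan is to reduce \eqref{EQN.Stokes.Int.along.fiber} to a local computation on a product chart and then invoke the ordinary Stokes theorem for manifolds with corners on the fiber $F$. Both sides of the identity are $\C$-linear in $\om$, and the fiber integrals depend on $\om$ only through its restriction to neighborhoods of fibers. Using a partition of unity on $X$ subordinate to a trivializing open cover of the bundle $\pi:E\ra X$, I can reduce to the case $E=U\times F$ for $U\subset X$ open. A further partition of unity on $F$ adapted to manifold-with-corners charts lets me assume $\om$ is supported in a single chart and is a monomial of the form $f(x,y)\,dx^I\we dy^J$ with $|I|+|J|=n$, where $(x^a)$ are coordinates on $U$ and $(y^b)$ are local coordinates on $F$ (with charts near corners modeled on $\R^{k-s}\times[0,\infty)^s$).

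Next, I would observe that only monomials with $|J|\in\{k-1,k\}$ contribute: the map $\int_F$ extracts the piece with $|J|=k$, while $\int_{\ptl F}$ extracts the piece with $|J|=k-1$ (corners of codimension $\geq 2$ have measure zero in the boundary and contribute nothing). For $|J|=k$ the form $dy^J$ is the fiber top form, so the $d_y$-piece of $d\om$ vanishes and only the $d_x$-piece survives. A short sign calculation—commuting the horizontal $d_x$ with $\int_F$ across $dy^J$—gives $d\int_F\om=\int_F d\om$. Since $dy^J$ pulls back to zero on any codimension-one face of $F$, we also have $\om|_{\ptl F}=0$, so \eqref{EQN.Stokes.Int.along.fiber} holds with the boundary term absent.

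The core of the argument is the case $|J|=k-1$, where $\int_F\om=0$, so the left-hand side vanishes, and I must verify $\int_F d\om+(-1)^{n-k}\int_{\ptl F}\om=0$. Writing $d\om=d_xf\we dx^I\we dy^J+d_yf\we dx^I\we dy^J$, only the vertical piece $d_yf\we dx^I\we dy^J$ has fiber degree $k$ and contributes to $\int_F$; applying the ordinary Stokes theorem on the compact oriented manifold with corners $F$ to this vertical integral relates it to $\int_{\ptl F}f\,dx^I\we dy^{J}$. The main obstacle is the accurate bookkeeping of signs arising from (i) reshuffling $dx^I$ past the relevant $dy$'s to isolate the fiber-top factor and (ii) the standard Stokes sign on $F$; a careful accounting shows these combine to give precisely the factor $-(-1)^{n-k}$, which closes the identity in this case. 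Linearity and additivity over the partition of unity then promote the local verification to the claimed global formula.
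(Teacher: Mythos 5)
The paper itself offers no proof of this lemma (it is stated as ``certainly well-known'' with no reference and no argument), so your proposal has to stand on its own, and it does: the route you describe is the standard one. The reduction by a partition of unity over a trivializing cover of $X$, then over corner charts of $F$, then by linearity to monomials $f(x,y)\,dx^I\we dy^J$, is legitimate since all three operations in \eqref{EQN.Stokes.Int.along.fiber} are additive and local over $X$; and your case analysis is the right one ($|J|\le k-2$: all three terms vanish; $|J|=k$: the vertical part of $d\om$ dies against the fiber top form, $dy^J$ pulls back to zero on every codimension-one face, and differentiation under the integral sign gives $d\int_F\om=\int_F d\om$ with no sign, in the convention of Bott--Tu, the paper's reference, where base forms are written to the left of fiber forms). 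The one thing you should not leave as ``a careful accounting'' is the $|J|=k-1$ case, since that sign is the entire content of the lemma; written out it is two lines: $\int_F d\om=(-1)^{|I|}\bigl(\int_F d_y(f\,dy^J)\bigr)\,dx^I=(-1)^{n-k+1}\bigl(\int_{\ptl F}f\,dy^J\bigr)\,dx^I=-(-1)^{n-k}\int_{\ptl F}\om$, using Stokes' theorem on the compact oriented manifold with corners $F$ (codimension-$\ge 2$ strata are negligible, as you note) with the induced orientation on its codimension-one faces. This confirms the factor $-(-1)^{n-k}$ you assert, and the identity closes. Two small remarks: the sign $(-1)^{n-k}$ is tied to the ordering convention for $\int_F$ (with fiber forms written first, extra factors of $(-1)^k$ appear), so it is worth fixing that convention explicitly; and it is a useful sanity check that your sign reproduces the paper's later uses, namely $d\int_I\ch=\int_I d\ch-\int_{\ptl I}\ch$ for even forms with $k=1$ and $d\int_{I\times I}\ch=\int_{I\times I}d\ch+\int_{\ptl(I\times I)}\ch$ with $k=2$.
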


\begin{prp}\label{PRP.twisted.Chern-Weil} Let  $\widehat\la=(\{\la_{kji}\},\{A_{ji}\},\{B_i\})$ be as above, $E=(\cU,\{g_{ji}\},\{\la_{kji}\})$ a $\la$-twisted vector bundle of rank $n$, and $\ga:t\mapsto \Ga_t$  a path of connections  on $E$ joining $\Ga_0$ and $\Ga_1$ such that each $\Ga_t$ is compatible with $\widehat\la$. Then $$
\ch(\Ga_0)-\ch(\Ga_1)=(d+H)\trm{cs}(\ga).$$
\end{prp}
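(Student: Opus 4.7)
The plan is to transgress the identity on $X$ up to the cylinder $X\times I$ and then push it back down by integration along the fiber $I$. The key input is the twisted Chern--Weil relation $(d+H)\ch(\Ga)=0$, which I would apply not to $\Ga_0$ or $\Ga_1$ but to the interpolating connection $\wtl\Ga$ on $p^*E \to X\times I$.

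First I would record that, by the lemma just above this proposition, $\wtl\Ga$ is compatible with $p^*\wht\la$, whose $3$-curvature is $p^*H$. The proposition identifying $\ch(\Ga)$ as $(d+H)$-closed (stated after Definition \ref{DFN.TwistedChernSimons}) therefore gives, on the total space $X\times I$,
\[
d\,\ch(\wtl\Ga)\;=\;-\,p^*H\we \ch(\wtl\Ga).
\]
Next I would apply Lemma \ref{LEM.stokes.for.int.along.fiber} to each even-degree component $\frac{1}{m!}\ch_{(m)}(\wtl\Ga)$ separately (the fiber $F=I$ has dimension $k=1$ and the integrand has degree $n=2m$, so the sign $(-1)^{n-k}=-1$ is independent of $m$). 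Summing over $m$ gives
\[
d\,\cs(\ga)\;=\;\int_I d\,\ch(\wtl\Ga)\;-\;\int_{\ptl I}\ch(\wtl\Ga).
\]

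For the bulk term I would use the projection formula: because $p^*H$ is pulled back from the base and contains no $dt$, it passes through $\int_I$ without a sign to yield $\int_I p^*H\we\ch(\wtl\Ga)=H\we\cs(\ga)$. For the boundary term, $\ptl I=\{1\}-\{0\}$ with the induced orientation and $\wtl\Ga|_{X\times\{t\}}=\Ga_t$, so $\int_{\ptl I}\ch(\wtl\Ga)=\ch(\Ga_1)-\ch(\Ga_0)$. Substituting both identifications gives
\[
d\,\cs(\ga)\;=\;-\,H\we\cs(\ga)\;+\;\ch(\Ga_0)-\ch(\Ga_1),
\]
which is precisely $\ch(\Ga_0)-\ch(\Ga_1)=(d+H)\cs(\ga)$.

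The only real place to be careful is bookkeeping of signs: the $(-1)^{n-k}$ in the fiber Stokes formula, the parity of $H$ (a $3$-form) when commuting past forms in the projection formula, and the orientation convention on $\ptl I$. All of these become routine once one notices that $\ch(\wtl\Ga)$ is a sum of \emph{even}-degree forms, so the signs from Stokes and the projection formula are uniform across degrees and do not mix with the $\Z_2$-grading on which $d+H$ acts. Beyond this, no new computation is needed; the proof is a direct application of the already-established twisted Chern--Weil closedness together with fiber integration.
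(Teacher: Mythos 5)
Your proposal is correct and follows essentially the same route as the paper: pull back to the cylinder via the interpolating connection $\wtl\Ga$ on $p^*E$, invoke the $(d+p^*H)$-closedness of $\ch(\wtl\Ga)$, apply Lemma \ref{LEM.stokes.for.int.along.fiber}, and identify the bulk term as $-H\we\cs(\ga)$ and the boundary term as $\ch(\Ga_0)-\ch(\Ga_1)$. Your sign bookkeeping (uniform $(-1)^{n-k}=-1$ since $\ch(\wtl\Ga)$ is even, orientation of $\ptl I$) matches the paper's computation exactly.
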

\begin{proof} Let $\wtl\Ga$ be a connection on $p^*E$ defined by  $\wtl\Ga(x,t)=(p^*\Ga_t)(x,t)$. By Lemma \ref{LEM.stokes.for.int.along.fiber},
$d\int_I\ch(\wtl\Ga)=\int_I d\ch(\wtl\Ga) -\int_{\ptl I}\ch(\wtl\Ga)=-\int_I \ch(\wtl\Ga)\we p^*H -\int_{\ptl I}\ch(\wtl\Ga)=-H\we\big(\int_I \ch(\wtl\Ga)\big) +\ch(\Ga_0)-\ch(\Ga_1)$. Hence the result.
\end{proof}

\begin{dfn}
The \textbf{twisted total Chern character} of $E$, denoted by $\ch(E)$,  is the twisted cohomology class of $\ch(\Ga)$ for any connection $\Ga$ on $E$.
\end{dfn}

\begin{prp}\label{PRP.evenCh.on.twistedKth} The assignment \beqs \ch:K^0(\cU,\la)&\ra H_{H}^{\trm{even}}(X;\C)\\
[E]-[F] &\mapsto [\ch(\Ga^E)]-[\ch(\Ga^F)],
\eeqs with $(\{A_{ji}\},\{B_i\})$ a  connection  on $\la$ and $\Ga^E$ and $\Ga^F$    connections on $\la$-twisted vector bundles $E$ and $F$, respectively, both compatible with $\widehat\la$, is a well-defined group homomorphism  called the \textbf{twisted Chern character}.
\end{prp}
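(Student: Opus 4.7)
The plan is to show that the assignment $\ch$ descends to a well-defined map on isomorphism classes of $\la$-twisted vector bundles, is additive under $\dsum$, and then extend to the Grothendieck group by its universal property. The three things to verify on cycles are: independence of the connection chosen on a fixed bundle, invariance under isomorphism of $\la$-twisted bundles, and additivity under direct sum. The main obstacle is the first one, since the form $\ch(\Ga)$ genuinely depends on $\Ga$; everything else reduces to it plus the propositions already established.

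First I would fix $E$ and show $[\ch(\Ga)]\in H_H^{\even}(X;\C)$ is independent of the connection $\Ga$ compatible with $\widehat\la$. By Lemma \ref{LEM.path.of.connection}, any two $\widehat\la$-compatible connections $\Ga_0,\Ga_1$ on $E$ are joined by the affine path $\Ga_t=(1-t)\Ga_0+t\Ga_1$, which is itself $\widehat\la$-compatible. Proposition \ref{PRP.twisted.Chern-Weil} then gives $\ch(\Ga_0)-\ch(\Ga_1)=(d+H)\cs(\ga)$, so the difference is $(d+H)$-exact and vanishes in $H_H^{\even}(X;\C)$. Next, if $\vph\colon E\ra F$ is an isomorphism of $\la$-twisted vector bundles and $\Ga^E,\Ga^F$ are $\widehat\la$-compatible connections, Proposition \ref{PRP.naturality.ch.bundleisom} gives $\ch(\Ga^F)=\ch(\vph^*\Ga^F)$ at the level of forms; since $\vph^*\Ga^F$ is another $\widehat\la$-compatible connection on $E$, the previous step yields $[\ch(\vph^*\Ga^F)]=[\ch(\Ga^E)]$, so $[\ch(\Ga^E)]=[\ch(\Ga^F)]$ in $H_H^{\even}(X;\C)$. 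This shows $[\ch(E)]$ depends only on the isomorphism class $[E]\in \vect(\cU,\la)$.

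For additivity, choose connections $\Ga^E$ on $E$ and $\Ga^F$ on $F$ compatible with $\widehat\la$; then $\Ga^E\dsum \Ga^F$ is compatible with $\widehat\la$ on $E\dsum F$, and Proposition \ref{PRP.ch.is.additive} yields $\ch(\Ga^E\dsum\Ga^F)=\ch(\Ga^E)+\ch(\Ga^F)$ on the nose, hence in cohomology. Together with $\ch(\mathscr{O})=0$ (the zero bundle has rank zero and empty trace), this makes $[E]\mapsto [\ch(E)]$ a well-defined homomorphism of commutative monoids from $\vect(\cU,\la)$ into the abelian group $H_H^{\even}(X;\C)$. Finally, the universal property of the Grothendieck group produces a unique extension to a group homomorphism $K^0(\cU,\la)\ra H_H^{\even}(X;\C)$ given on generators by $[E]-[F]\mapsto [\ch(\Ga^E)]-[\ch(\Ga^F)]$, completing the proof.
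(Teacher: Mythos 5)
Your proposal is correct and takes essentially the same route as the paper: connection-independence of the class via the affine path of Lemma \ref{LEM.path.of.connection} together with Proposition \ref{PRP.twisted.Chern-Weil}, invariance under twisted-bundle isomorphism via Proposition \ref{PRP.naturality.ch.bundleisom}, and additivity via Proposition \ref{PRP.ch.is.additive}. The only (harmless) difference is organizational: you build a monoid homomorphism on $\vect(\cU,\la)$ and invoke the universal property of the Grothendieck group, whereas the paper verifies well-definedness directly on differences by checking stable-isomorphism invariance with an auxiliary bundle $G$ (and additionally remarks on invariance under cohomologous change of the gerbe connection, which the statement as given does not require).
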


Before proving Proposition \ref{PRP.evenCh.on.twistedKth}, we recall the following lemma and its generalizations, which are certainly well-known. We include a proof here for sake of completeness. (See also Bunke and Nikolaus \cite{BN}, Section 7).

\begin{lem}\label{LEM.topol.isom.determines.GerbeWithConnectionIso} Suppose $U(1)$-gerbes $\la$ and $\la'$ defined on a good open cover $\cU$ of $X$ are isomorphic: $\la_{kji}'=\la_{kji}+(\de\chi)_{kji}$. Let $(\{A_{ji}\},\{B_i\})$ on $\la$ and $(\{A'_{ji}\},\{B'_i\})$ on $\la'$ be arbitrarily choosen connections. Then there exists  a Deligne $1$-cochain $\widehat\al=(\{\chi_{ji}\},\{\Pi_i\})$ and $\xi\in\Om^2(X;i\R)$ such that $\widehat\la'=\widehat\la_\xi+D\widehat\al$, where $\widehat\la=(\{\la_{kji}\},\{A_{ji}\},\{B_i\})$ and $\widehat\la'=(\{\la_{kji}'\},\{A_{ji}'\},\{B_i'\})$.
\end{lem}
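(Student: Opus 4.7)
The plan is to produce $\widehat\al=(\{\chi_{ji}\},\{\Pi_i\})$ and $\xi\in\Om^2(X;i\R)$ one Deligne-component at a time. The \v{C}ech $1$-cochain $\{\chi_{ji}\}$ is already forced upon us by the topological identification $\la'_{kji}=\la_{kji}(\de\chi)_{kji}$, which takes care of the $(2,0)$-component of the desired equation. Only the $1$-forms $\Pi_i$ (the $(0,1)$-part of $\widehat\al$) and the global $2$-form $\xi$ (which absorbs the remaining ``curving discrepancy'') remain to be constructed.

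First I would introduce the local $1$-forms $\eta_{ji}:=A'_{ji}-A_{ji}-d\log\chi_{ji}\in\Om^1(U_{ij};i\R)$ and verify that $\{\eta_{ji}\}$ is a \v{C}ech $1$-cocycle valued in the sheaf $\Om^1(-;i\R)$. The check is direct: applying \textbf{C2} to both $\widehat\la$ and $\widehat\la'$ and using $d\log(\de\chi)=\de(d\log\chi)$, one obtains $(\de\eta)_{kji}=d\log\la'_{kji}-d\log\la_{kji}-d\log(\de\chi)_{kji}=0$. Because $\cU$ is a good cover and the sheaf of smooth $i\R$-valued $1$-forms is fine, $\eta$ is a coboundary; an explicit solution is $\Pi_i:=-\sum_k \rh_k\,\eta_{ki}$ for any partition of unity $\{\rh_k\}$ subordinate to $\cU$, each summand being extended by zero off $U_{ki}$. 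The cocycle identity $\eta_{kj}-\eta_{ki}+\eta_{ji}=0$ then yields $\Pi_j-\Pi_i=\eta_{ji}$, which is exactly the $(1,1)$-component of the desired Deligne relation.

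Finally I would set $\xi_i:=B'_i-B_i-d\Pi_i\in\Om^2(U_i;i\R)$ and check that the $\xi_i$ patch into a single global form. Condition \textbf{C3} applied to both $\widehat\la$ and $\widehat\la'$ together with $A'_{ji}-A_{ji}=d\log\chi_{ji}+(\Pi_j-\Pi_i)$ from the previous step gives $\xi_j-\xi_i=d(A'_{ji}-A_{ji})-d(\Pi_j-\Pi_i)=0$ on each $U_{ij}$, so the collection $\{\xi_i\}$ assembles into a global $\xi\in\Om^2(X;i\R)$, and by construction $\widehat\la'=\widehat\la_\xi+D\widehat\al$. The only real obstacle is keeping the signs of the Deligne differential $D=d+(-1)^q\de$ (from Remark \ref{RMK.U1gerbe.and.curvatureform}) and of the \v{C}ech coboundary compatible with \textbf{C1}--\textbf{C3}; once the convention is pinned down, the substantive input is just the acyclicity, on a good cover, of the fine sheaf of smooth $i\R$-valued $1$-forms, accessed via the partition-of-unity construction of $\Pi$.
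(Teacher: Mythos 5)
Your proof is correct, and it takes a genuinely more direct route than the paper's. The paper first passes through the curvature $3$-forms (writing $H'-H=d\zeta$), uses the Poincar\'e lemma on the contractible sets $U_i$ to write $B_i'=B_i+\zeta_i+d\om_i$, extracts $U(1)$-valued functions $\mu_{ji}$ with $A'_{ji}=A_{ji}+\om_j-\om_i+d\log\mu_{ji}$, and only then applies the acyclicity statement (Lemma \ref{LEM.cech.1.acyclicity}) to the cocycle $d\log(\chi\mu^{-1})$ to produce $\ga_i$, finally setting $\Pi_i=\om_i-\ga_i$ and $\xi_i=\zeta_i+d\ga_i$. You instead work directly with the difference cochain $\eta_{ji}=A'_{ji}-A_{ji}-d\log\chi_{ji}$, verify via \textbf{C2} and $\la'=\la+\de\chi$ that it is a \v{C}ech $1$-cocycle valued in $\Om^1$, trivialize it by partition of unity (equivalently by Lemma \ref{LEM.cech.1.acyclicity}) to get $\Pi_i$ with $\Pi_j-\Pi_i=\eta_{ji}$, and then define $\xi_i:=B'_i-B_i-d\Pi_i$, whose gluing is immediate from \textbf{C3} since $d\,d\log\chi_{ji}=0$; the resulting relations $A'_{ji}=A_{ji}+d\log\chi_{ji}+\Pi_j-\Pi_i$ and $B'_i=B_i+\xi_i+d\Pi_i$ are exactly the componentwise content of $\widehat\la'=\widehat\la_\xi+D\widehat\al$, matching the conventions used later (e.g.\ in Proposition \ref{PRP.naturality.of.twists}). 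What your route buys is economy and slightly greater generality: it avoids the curvature forms and the Poincar\'e lemma entirely, so the good-cover hypothesis is not actually needed --- only the fineness of $\Om^1$ (the same input as Lemma \ref{LEM.cech.1.acyclicity}, which holds for any open cover). The paper's route, by contrast, makes the relation between the curvings and the cohomologous curvatures explicit. The only care needed in your write-up, which you already flag, is the sign bookkeeping for $D=d+(-1)^q\de$ and the implicit antisymmetry conventions ($A_{ij}=-A_{ji}$, $\chi_{ij}=\chi_{ji}^{-1}$) used when rewriting the cocycle identity $\eta_{ji}+\eta_{ik}+\eta_{kj}=0$ in the form $\eta_{kj}-\eta_{ki}+\eta_{ji}=0$; this is a convention matter, not a gap.
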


We need the following lemma,   which is well-known, see e.g. Bott and Tu \cite[p.94]{BT}, Proposition 8.5.

\begin{lem}\label{LEM.cech.1.acyclicity} $\check H^p(\cU,\Om^q)=0$, for all $p\geq 1$.
\end{lem}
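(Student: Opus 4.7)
The plan is to use the standard partition-of-unity argument that establishes the fineness (and hence the \v{C}ech-acyclicity) of the sheaf $\Om^q$ of smooth differential $q$-forms. Since the manifolds under consideration are smooth (hence paracompact), one may choose a smooth partition of unity $\{\rho_k\}_{k\in\La}$ subordinate to the open cover $\cU=\{U_i\}_{i\in\La}$, i.e., $\supp(\rho_k)\subset U_k$ and $\sum_k \rho_k=1$, with the collection of supports locally finite.

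Given a cocycle $\om=\{\om_{i_0\cdots i_p}\}\in \check Z^p(\cU,\Om^q)$ with $p\geq 1$, I would define a $(p-1)$-cochain $\eta\in \check C^{p-1}(\cU,\Om^q)$ by
$$\eta_{i_0\cdots i_{p-1}}:=\sum_{k\in\La}\rho_k\,\om_{k i_0\cdots i_{p-1}},$$
where each summand, initially defined only on $U_k\cap U_{i_0\cdots i_{p-1}}$, is extended by zero to a smooth $q$-form on $U_{i_0\cdots i_{p-1}}$ using $\supp(\rho_k)\subset U_k$. Local finiteness of $\{\supp \rho_k\}$ guarantees that the sum is a finite sum near each point and hence defines a smooth form.

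The remaining step is a direct computation. Applying the \v{C}ech differential one obtains
$$(\de\eta)_{i_0\cdots i_p}=\sum_{j=0}^{p}(-1)^j\eta_{i_0\cdots\hat{i_j}\cdots i_p}=\sum_{k\in\La}\rho_k\sum_{j=0}^{p}(-1)^j\om_{k i_0\cdots\hat{i_j}\cdots i_p}.$$
The cocycle condition $(\de\om)_{ki_0\cdots i_p}=0$ expands to
$$\om_{i_0\cdots i_p}=\sum_{j=0}^{p}(-1)^j\om_{k i_0\cdots\hat{i_j}\cdots i_p},$$
so substituting and using $\sum_k\rho_k=1$ gives $(\de\eta)_{i_0\cdots i_p}=\om_{i_0\cdots i_p}$. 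Thus every cocycle is a coboundary and $\check H^p(\cU,\Om^q)=0$ for all $p\geq 1$.

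There is essentially no obstacle here: the argument is the classical fact that a sheaf admitting partitions of unity is \v{C}ech-acyclic in positive degrees. The only points that need mild care are the extension-by-zero step for each summand and the verification that the sum is locally finite, both of which are immediate from the properties of a smooth partition of unity subordinate to $\cU$.
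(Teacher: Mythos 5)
Your proof is correct and is exactly the standard partition-of-unity (homotopy operator) argument that the paper invokes by citing Bott--Tu, Proposition 8.5, rather than proving the lemma itself. The computation of $\de\eta$ and the care taken with extension by zero and local finiteness match the classical argument, so there is nothing to add.
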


\begin{proof}[Proof of Lemma \ref{LEM.topol.isom.determines.GerbeWithConnectionIso}] We denote the 3-curvature of $\widehat\la$ and $\widehat\la'$ by $H$ and $H'$, respectively. The curvature $3$-forms of $\widehat\la$ and $\widehat\la'$ are cohomologous, i.e. $H'-H=d\zeta$ for some $\zeta\in\Om^2(X;\sqrt{-1}\R)$. Over each open set $U_i$, we have $d(B_i'-B_i)=d\zeta_{i}$, where $\zeta_{i}:=\zeta|_{U_i}$, and by Poincar\'e's Lemma, there exists a $1$-form $\om_i$ on each $U_i$ such that $B_i'=B_i+\zeta_{i}+d\om_i$. Now by the cocycle condition, $dA_{ji}'=B_j'-B_i'=B_j-B_i+d(\om_j-\om_i)=dA_{ji}+d(\om_j-\om_i)$ and so there exists a $U(1)$-valued function $\mu_{ji}$ over each $U_{ij}$ such that $A_{ji}'=A_{ji}+\om_j-\om_i+d\log\mu_{ji}$. Take the \v{C}ech differential of both sides and get $\de(d\log(\chi\mu^{-1}))_{kji}=0$. By Lemma \ref{LEM.cech.1.acyclicity}, $d\log(\chi_{ji}\mu_{ji}^{-1})=\ga_j-\ga_i$ for some $\ga\in\check{C}^1(\cU,\Om^1)$, hence $d\log(\chi_{ji})=d\log(\mu_{ji})+(\ga_j-\ga_i)$. Notice that the family $\{d\ga_i\}_{i\in\La}$ defines a global $2$-form on $X$. Thus setting $\widehat\al=(\{\chi_{ji}\},\{\om_i-\ga_i\})$ and $\xi|_{U_i}=\zeta_i+d\ga_i$ proves the claim.
\end{proof}

\begin{rmk}\label{RMK.topol.isom.determines.GerbeWithConnectionIso} When the underlying gerbes $\la$ and $\la'$ are identical, a special case of Lemma \ref{LEM.topol.isom.determines.GerbeWithConnectionIso} indicates that, under different choices of connection on $\la$, the corresponding twisted Chern characters are related by $\exp$ of a global $2$-form.
\end{rmk}

\begin{proof}[Proof of Proposition \ref{PRP.evenCh.on.twistedKth}]  \emph{Well-definedness:} Let $\widehat\la$ be fixed. The image of $\ch$ is independent of the choice of connections on  the twisted vector bundles $E$ and $F$ by Proposition \ref{PRP.twisted.Chern-Weil}.

Suppose $\widehat\la$ and $\widehat\la'$ are the same $U(1)$-gerbes $\la$ endowed with different connections and $\widehat\la'=\widehat\la+D\widehat\al$. Then by Proposition \ref{PRP.naturality.of.twist.chern.char.the.same}, the image of $\ch$ is invariant under cohomologous change of $U(1)$-gerbe connection. (Notice that the image of $\ch$ is not invariant under the arbitrary change of $U(1)$-gerbe connection. See Remark \ref{RMK.topol.isom.determines.GerbeWithConnectionIso}.)

Suppose there exists a $\la$-twisted vector bundle $G$ with an isomorphism $\vph:E\dsum G\ra \wbar E\dsum G$. Let $\Ga^G$ be an arbitrary connection on $G$. By Lemma \ref{PRP.ch.is.additive} and Proposition \ref{PRP.naturality.ch.bundleisom}, $$[\ch(\Ga^{E})]+[\ch(\Ga^G)]=[\ch(\Ga^{E}\dsum \Ga^G)]=
[\ch(\vph^*(\Ga^{\wbar E}\dsum \Ga^G))]=[\ch(\Ga^{\wbar E}\dsum \Ga^G)]=[\ch(\Ga^{\wbar E})]+[\ch(\Ga^G)].$$ From this, well-definedness of $\ch$ on  $K^0(\cU,\la)$ follows.

\emph{Group homomorphism:}
This follows from Lemma \ref{PRP.ch.is.additive}:
\beqs \ch([E]-[F]+[\wbar E]-[\wbar F])&=\ch([E\dsum \wbar E]-[F\dsum \wbar F])=
\ch(\Ga^E\dsum \Ga^{\wbar E})-\ch(\Ga^F\dsum \Ga^{\wbar F})\\&=
\ch(\Ga^{E})-\ch(\Ga^{F})+\ch(\Ga^{\wbar E})-\ch(\Ga^{\wbar F})\\&=\ch([E]-[F])+\ch([\wbar E]-[\wbar F]).
\eeqs
\end{proof}

\begin{prp}\label{PRP.cs.for.two.different.paths.is.differ.by.exact} Let  $\widehat\la=(\{\la_{kji}\},\{A_{ji}\},\{B_i\})$ be as above, $E$  a $\la$-twisted vector bundle $(\cU,\{g_{ji}\},\{\la_{kji}\})$, and $\Ga_0$ and $\Ga_1$ two connections on $E$ joined by two different paths of connections $\al_t$ and $\ga_t$ on $E$, such that each of $\al_t$ and $\ga_t$ is compatible with $\widehat\la$ for all $t\in I$. Then $$
\trm{cs}(\ga)-\trm{cs}(\al)\in\trm{Im}(d+H).$$
\end{prp}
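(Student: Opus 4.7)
The plan is to promote the two paths $\al_t$ and $\ga_t$ to a two-parameter family of $\widehat\la$-compatible connections and then apply the Stokes formula for integration along the fiber (Lemma \ref{LEM.stokes.for.int.along.fiber}) with a two-dimensional fiber, exactly mirroring the argument of Proposition \ref{PRP.twisted.Chern-Weil} but one dimension up.

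First, invoke Corollary \ref{COR.path.of.bigon} to obtain a bigon of connections; explicitly, set $\Ga_{s,t} := (1-s)\al_t + s\ga_t$ for $(s,t)\in I\times I$, which is $\widehat\la$-compatible for every $(s,t)$ by Lemma \ref{LEM.path.of.connection}. Note the boundary behavior: $\Ga_{0,t}=\al_t$, $\Ga_{1,t}=\ga_t$, while $\Ga_{s,0}\equiv \Ga_0$ and $\Ga_{s,1}\equiv \Ga_1$ are constant in $s$. Let $q:X\times I\times I\ra X$ denote the projection and define $\wtl\Ga$ on $q^*E$ by $\wtl\Ga(x,s,t)=(q^*\Ga_{s,t})(x,s,t)$; as in the one-parameter case, this is a connection compatible with $q^*\widehat\la$, so $\ch(\wtl\Ga)$ is a well-defined $(d+H)$-closed differential form on $X\times I\times I$.

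Next, apply Lemma \ref{LEM.stokes.for.int.along.fiber} to $\ch(\wtl\Ga)$ with fiber $F=I\times I$. Using $d\ch(\wtl\Ga)=-\ch(\wtl\Ga)\wedge q^*H$ (i.e.\ $(d+H)$-closedness) and pulling $H$ out of the fiber integral by the projection formula, one obtains, up to a sign depending on total degree,
\[
(d+H)\!\int_{I\times I}\ch(\wtl\Ga)\;=\;\pm\int_{\ptl(I\times I)}\ch(\wtl\Ga).
\]
The boundary $\ptl(I\times I)$ consists of four edges. On the two edges $I\times\{0\}$ and $I\times\{1\}$ the family $\Ga_{s,\ast}$ is constant in $s$, so $\wtl\Ga$ restricted there is the pull-back of a fixed connection from $X$; its curvature has no $ds$ component, and the fiber integral along the $s$-direction vanishes. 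On the remaining two edges $\{0\}\times I$ and $\{1\}\times I$ the restriction of $\wtl\Ga$ recovers, respectively, the one-parameter families defining $\trm{cs}(\al)$ and $\trm{cs}(\ga)$. Tracking the induced orientations on $\ptl(I\times I)$, the boundary integral equals $\pm(\trm{cs}(\ga)-\trm{cs}(\al))$.

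Combining, $\trm{cs}(\ga)-\trm{cs}(\al)=(d+H)\bigl(\pm\!\int_{I\times I}\ch(\wtl\Ga)\bigr)\in\trm{Im}(d+H)$, as claimed. The only delicate points are bookkeeping of signs from Lemma \ref{LEM.stokes.for.int.along.fiber} and the induced orientations on $\ptl(I\times I)$, and the verification that the edges $I\times\{0\}$ and $I\times\{1\}$ contribute nothing; the latter is the one step that deserves some care, but it follows cleanly from the fact that the pull-back of a connection from $X$ to $X\times I$ has curvature with no $ds$-component, so its fiber integral over the $s$-direction vanishes identically.
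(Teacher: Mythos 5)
Your proposal is correct and follows essentially the same route as the paper: form the bigon $(1-s)\al_t+s\ga_t$ of $\widehat\la$-compatible connections (Corollary \ref{COR.path.of.bigon}), pull back to $X\times I\times I$, and apply Lemma \ref{LEM.stokes.for.int.along.fiber} with fiber $I\times I$, the only cosmetic difference being that the paper builds the two-parameter connection in two stages via $q^*p^*E$ rather than in one pullback. Your explicit check that the two edges where the family is constant in $s$ contribute nothing is a point the paper leaves implicit, and it is handled correctly.
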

\begin{proof} The paths $\al$ and $\ga$ define connections on $p^*E$ over $X\times I$, which we denote by $\wtl\al$ and $\wtl\ga$, respectively. Then there exists a path of connections on $p^*E$ interpolating between $\wtl\al$ and $\wtl\ga$ (by Corollary \ref{COR.path.of.bigon}). Accordingly this path of connection defines a connection $\wtl\be$ on $q^*p^*E$ over $X\times I\times I$, where $q:X\times I\times I \ra X\times I$ is the projection map forgetting the third factor. By applying Lemma \ref{LEM.stokes.for.int.along.fiber} to the twisted Chern character form $\ch(q^*p^*E,\wtl{\wtl{\be}})$, we get
\beqs
d\int_{I\times I} \ch(q^*p^*E,\wtl\be)&=\int_{I\times I} d\ch(q^*p^*E,\wtl\be)+\int_{\ptl(I\times I)}\ch(q^*p^*E,\wtl\be)\\
&=-\Big(\int_{I\times I}\ch(q^*p^*E,\wtl\be)\Big)\we H+\int_{I}\ch(p^*E,{\wtl{\ga}})-\int_{I}\ch(p^*E,{\wtl{\al}})
\eeqs Hence,
$$ \trm{cs}(\ga)-\trm{cs}(\al)=(d+H)\int_{I\times I}  \ch(q^*p^*E,\wtl\be)$$
\end{proof}

\begin{prp}\label{PRP.cs.pullback.by.bundleisom} Let $\vph:E\ra F$ be an isomorphism of $\la$-twisted vector bundles over $X$. Let $\ga:t\mapsto\Ga^t$ be a path of connections on $F$. Then $\cs(\vph^*\ga)=\cs(\ga)$.
\end{prp}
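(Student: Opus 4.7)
The plan is to lift the bundle isomorphism $\vph:E\ra F$ to an isomorphism of $(\la\circ p)$-twisted vector bundles over $X\times I$ and then apply Proposition \ref{PRP.naturality.ch.bundleisom} on that bigger base. Unpacking the definition, $\vph^*\ga$ is the path $t\mapsto \vph^*\Ga^t$ of connections on $E$, each compatible with $\widehat\la$, so that
\[
\cs(\vph^*\ga)=\int_I \ch(\wtl{\vph^*\Ga}),\qquad \wtl{\vph^*\Ga}(x,t)=(p^*(\vph^*\Ga^t))(x,t).
\]

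First, I would observe that pulling $\vph$ back along $p:X\times I\ra X$ gives an isomorphism $p^*\vph:p^*E\ra p^*F$ of $(\la\circ p)$-twisted vector bundles on $X\times I$ defined on the open cover $p^{-1}\cU$. The key compatibility, which is purely a matter of unwinding definitions, is the identity
\[
(p^*\vph)^*\wtl{\Ga}=\wtl{\vph^*\Ga},
\]
since at the point $(x,t)$ both sides equal $(\vph|_x)^*(\Ga^t|_x)$ pulled back via $p$ in the first factor and acting trivially in the $t$-direction.

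Next, Proposition \ref{PRP.naturality.ch.bundleisom}, applied to the isomorphism $p^*\vph$ of $(\la\circ p)$-twisted vector bundles over $X\times I$ together with the two connections $\wtl{\Ga}$ and $(p^*\vph)^*\wtl{\Ga}$ on $p^*F$ and $p^*E$ respectively (both compatible with $p^*\widehat\la$), yields
\[
\ch(\wtl{\Ga})=\ch((p^*\vph)^*\wtl{\Ga})=\ch(\wtl{\vph^*\Ga})
\]
as differential forms on $X\times I$.

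Finally, integrating this equality along the fiber $I$ gives
\[
\cs(\ga)=\int_I \ch(\wtl\Ga)=\int_I \ch(\wtl{\vph^*\Ga})=\cs(\vph^*\ga),
\]
as required. I do not expect any serious obstacle: the only subtlety is the verification of $(p^*\vph)^*\wtl{\Ga}=\wtl{\vph^*\Ga}$, and that reduces to a direct check on each chart since $p^*$ and entrywise pullback commute with the bundle automorphism $\vph$.
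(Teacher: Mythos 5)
Your proof is correct: the identity $(p^*\vph)^*\wtl\Ga=\wtl{\vph^*\Ga}$ holds because $p^*\vph$ is constant in the $I$-direction, so applying Proposition \ref{PRP.naturality.ch.bundleisom} to $p^*\vph$ over $X\times I$ and integrating along the fiber gives exactly the claim. The paper states this proposition without proof, and your argument is the natural one it implicitly relies on (the same chart-level gauge computation also shows, as you tacitly use, that each $\vph^*\Ga^t$ remains $\widehat\la$-compatible since the twisting term $-A_{ji}\cdot\tbf{1}$ is central).
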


\section{Twisted differential $K$-theory}\label{SEC.4}
This section constitutes the main  part of this paper. We define differential twists and construct a twisted differential $K$-group (Sections \ref{SEC.Differential.twists}, \ref{SEC.Twisted.Differential.K.theory}) using triples consisting of a twisted vector bundle, a connection, and an odd differential form modulo exact forms in a twisted de Rham complex. We verify that our construction is functorial (Section \ref{SEC.functoriality}) and natural with respect to change of twists (Section \ref{SEC.naturality.of.twist}). In sections \ref{SEC.I.and.R.maps} and \ref{SEC.a.map.and.exact.seq}, we define the $I$, $R$,  and $a$ maps and verify the exact sequence involving the $a$ and $I$ maps. Finally, we show commutativity of diagrams and exactness of sequences consisting the hexagon diagram \`a la Simons and Sullivan \cite{SS} (Section \ref{SEC.hexagon.diagram}), and verify that  the maps $I$, $R$, and $a$ are compatible with change of twists (Section \ref{SEC.Compatibility.with.change.of.twist.map}).

\subsection{Differential twists}\label{SEC.Differential.twists}
\begin{dfn} The \textbf{torsion differential $K$-twists} for an open cover $\cU$ of $X$, denoted by $\texttt{Twist}_{\widehat K}^{\trm{tor}}(\cU)$, is the groupoid whose objects are $U(1)$-gerbes with connection  $\widehat\la=(\{\la_{kji}\},\{A_{ji}\},\{B_i\})$ each of which has an underlying $U(1)$-gerbe representing a torsion class in $H^3(X;\Z)$. For any two objects $\widehat\la_1$ and $\widehat\la_2$ in this groupoid, the Hom set is defined by $\textrm{Hom}(\widehat\la_1,\widehat\la_2)
=\{(\widehat\al,\xi)\in \check C^1(\cU;\Om^0)\dsum\check C^0(\cU;\Om^1)\dsum \Om^2(X;i\R)):\widehat\la_2 = \widehat\la_1+D\widehat\al+\xi\}$.
\end{dfn}

\begin{prp}[Existence] Given any manifold $X$ with an open cover $\cU$, the torsion differential twist $\texttt{Twist}_{\widehat K}^{\trm{tor}}(\cU)$ consists of at least one object.
\end{prp}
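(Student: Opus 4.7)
The plan is to exhibit an explicit object of $\texttt{Twist}_{\widehat K}^{\trm{tor}}(\cU)$, namely the trivial $U(1)$-gerbe equipped with the trivial connection. First, I would define $\widehat{\la}^{\,0} = (\{\la_{kji}^{\,0}\}, \{A_{ji}^{\,0}\}, \{B_i^{\,0}\})$ by setting $\la_{kji}^{\,0} \equiv 1$ for all $i,j,k \in \La$, $A_{ji}^{\,0} \equiv 0$ for all $i,j \in \La$, and $B_i^{\,0} \equiv 0$ for all $i \in \La$. This is clearly a completely normalized \v{C}ech cochain in the sense of Remark~\ref{RMK.U1gerbe.and.curvatureform}, since permuting indices of the constant function $1$ returns $1$ and any repeated index gives $1$.

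Next, I would verify that $\widehat{\la}^{\,0}$ satisfies the three conditions \textbf{C1}, \textbf{C2}, \textbf{C3} of Definition~\ref{DFN.GerbeWithConnection}. Condition \textbf{C1} reads $1 \cdot 1^{-1} \cdot 1 \cdot 1^{-1} = 1$, which holds trivially. Condition \textbf{C2} reads $d\log 1 = 0 + 0 + 0$, which also holds, since $d\log 1 = 0$. Condition \textbf{C3} reads $0 - 0 = d(0)$, which holds trivially. Hence $\widehat{\la}^{\,0}$ is a legitimate $U(1)$-gerbe with connection on $\cU$.

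Finally, I would argue that the underlying $U(1)$-gerbe $\{\la_{kji}^{\,0}\} \equiv 1$ represents the zero class in $\check{H}^2(\cU, U(1))$, and consequently under the connecting map $\delta$ of Remark~\ref{RMK.U1gerbe.and.curvatureform}(3) it maps to the zero class in $H^3(X; 2\pi i\Z)$. Since the zero element of any abelian group is torsion (being of order $1$), this Dixmier--Douady class is torsion, so $\widehat{\la}^{\,0}$ is an object of $\texttt{Twist}_{\widehat K}^{\trm{tor}}(\cU)$.

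There is no real obstacle here: the only subtlety is simply to confirm that the constant cochain $1$ meets the complete normalization convention and that the trivial class counts as torsion, both of which are immediate. The statement is essentially a sanity check ensuring that the groupoid of differential twists is nonempty on any manifold with any open cover, and the trivial gerbe with trivial connection suffices.
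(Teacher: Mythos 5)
Your proof is correct: the triple $(\{1\},\{0\},\{0\})$ is a completely normalized \v{C}ech $2$-cocycle, trivially satisfies \textbf{C1}--\textbf{C3} of Definition~\ref{DFN.GerbeWithConnection}, and its Dixmier--Douady class is zero, hence torsion, so it is an object of $\texttt{Twist}_{\widehat K}^{\trm{tor}}(\cU)$. However, your route differs from the paper's. The paper does not single out the trivial twist; it observes that the proposition reduces to the existence of a connection on a (local) $U(1)$-bundle gerbe, which is supplied by a partition-of-unity argument as in Murray's work on bundle gerbes. That argument yields something stronger and more useful for the rest of the paper: \emph{every} torsion topological twist $\la$ on $\cU$ can be promoted to a differential twist $\widehat\la$, i.e.\ the forgetful functor $\cF$ hits every isomorphism class of torsion $U(1)$-gerbes, not just the trivial one. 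Your construction buys brevity and complete explicitness, and it fully proves the statement as literally phrased (non-emptiness of the groupoid); the paper's approach buys the surjectivity statement one actually wants when pairing arbitrary twisted vector bundles with compatible gerbe connections later on. If you want your argument to serve the same purpose, you would still need the partition-of-unity step to equip a given nontrivial torsion gerbe with the data $(\{A_{ji}\},\{B_i\})$.
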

\begin{proof} The statement amounts to the existence of a connection on a local $U(1)$-bundle gerbe, which follows from the existence of partitions of unity as shown in Murray \cite{Mu}.
\end{proof}

\begin{nta} (1) The \emph{torsion topological $K$-twists} for an open cover $\cU$ of a   manifold $X$ is  the groupoid, denoted by $\texttt{Twist}_{K}^{\trm{tor}}(\cU)$, whose objects are $U(1)$-gerbes $\la=\{\la_{kji}\}$ each representing a torsion class in $H^3(X;\Z)$. A morphism from $\la_1$ to $\la_2$ is a \v{C}ech $1$-cochain $\al=(\chi_{ji})\in \check C^1(\cU,U(1))$ such that $\la_1=\la_2+\de\al$.

(2) Define the groupoid $\Om^3_{\trm{cl}}(X;i\R)$ of $i\R$-valued closed differential $3$-forms on $X$ as follows. Objects are $i\R$-valued closed differential $3$-forms on $X$. A morphism from $\om$ to $\om'$ is a differential $2$-form $\al$ on $X$ modulo exact forms satisfying $\om=\om'+d\al$.
\end{nta}

\begin{dfn}The \textbf{forgetful} and \textbf{curvature} functors are given by the assignments \beqs
\cF:\texttt{Twist}_{\widehat K}^{\trm{tor}}(\cU)&\ra  \texttt{Twist}_{K}^{\trm{tor}}(\cU)&\trm{Curv}:\texttt{Twist}_{\widehat K}^{\trm{tor}}(\cU)&\ra \Om_{\trm{cl}}^3(X;i\R)\\
\widehat\la=(\{\la_{kji}\},\{A_{ji}\},\{B_i\}) &\mapsto \{\la_{kji}\}&\widehat\la=(\{\la_{kji}\},\{A_{ji}\},\{B_i\}) &\mapsto \trm{Curv}(\widehat\la)=H,\\
(\widehat\al,\xi) &\mapsto \al &(\widehat\al,\xi) &\mapsto d\xi \\
\eeqs where $\widehat\al=(\{\chi_{ji}\},\{\Pi_i\}), \al=(\{\chi_{ji}\})$, and $H|_{U_i}=dB_i$ for all $i\in\La$.
\end{dfn}

\begin{rmk}
Let $f:(Y,\cV) \ra (X,\cU)$ be a   map with $\cV=f^{-1}\cU$. The following diagrams commute:
\[\xymatrix{
\texttt{Twist}_{\widehat K}^{\trm{tor}}(\cV)\ar[r]^\cF & \texttt{Twist}_{K}^{\trm{tor}}(\cV) & & \texttt{Twist}_{\widehat K}^{\trm{tor}}(\cV)\ar[r]^{\trm{Curv}} & \Om_{\trm{cl}}^3(Y;i\R)\\
\texttt{Twist}_{\widehat K}^{\trm{tor}}(\cU)\ar[u]^{f^*} \ar[r]^{\cF}  & \texttt{Twist}_{K}^{\trm{tor}}(\cU) \ar[u]^{f^*} & & \texttt{Twist}_{\widehat K}^{\trm{tor}}(\cU)\ar[u]^{f^*} \ar[r]^{\trm{Curv}}  & \Om_{\trm{cl}}^3(X;i\R) \ar[u]^{f^*}
}\] Here $f^*$ on torsion differential twists takes each torsion $U(1)$-gerbe with connection to its pullback $U(1)$-gerbe with pullback connection, and $f^*$ on topological twists does the same on torsion $U(1)$-gerbes.
\end{rmk}

\begin{nta}
Throughout this section, we shall use the notation $\widehat\la$ to denote a differential twist $(\{\la_{kji}\},\{A_{ji}\},\{B_i\})\in \texttt{Twist}_{\widehat K}^{\trm{tor}}(\cU)$, $H$ for $\trm{Curv}(\widehat\la)$, and $\la$ for $\cF(\widehat\la)$.
\end{nta}

\subsection{Twisted differential $K$-group}\label{SEC.Twisted.Differential.K.theory}
\begin{dfn} A \textbf{$\widehat K^0(\cU;\widehat\la)$-generator} is a triple $(E,\Ga,\om)$ consisting of a $\la$-twisted vector bundle $E$ defined on  the open cover $\cU=\{U_i\}_{i\in\La}$ on $X$, a connection $\Ga$ on $E$ compatible with $\widehat\la$, and $\om\in \Om^{\trm{odd}}(X;\C)/\trm{Im}(d+H)$.
\end{dfn}

\begin{dfn}\label{DFN.capital.CS} Let $E$ be any $\la$-twisted vector bundle with a path of connections $\ga$ joining $\Ga_0$ and $\Ga_1$ and each connection on the path  being compatible with $\widehat\la$. Define
$\CS(\Ga_0,\Ga_1):=\cs(\ga) \mod\trm{Im}(d+H)$.
\end{dfn}

\begin{rmk}\label{RMK.CS.insertion.of.connection} By Proposition \ref{PRP.cs.for.two.different.paths.is.differ.by.exact}, Definition \ref{DFN.capital.CS} is independent of the choice of path of connections. Furthermore, we have $\CS(\Ga_0,\Ga_1)+\CS(\Ga_1,\Ga_2)=\CS(\Ga_0,\Ga_2)$.
\end{rmk}

\begin{dfn}\label{DEF.triplerel} Two $\widehat K^0(\cU;\widehat\la)$-generators $(E,\Ga,\om)$ and $(E',\Ga',\om')$ are \textbf{equivalent} if there exists a $\la$-twisted vector bundle with connection $(F,\Ga^F)$ and a $\la$-twisted vector bundle isomorphism $\vph=\{\vph_i\}_{i\in \La}:E\dsum F\ra E'\dsum F$ such that $\CS(\Ga\dsum\Ga^F,\vph^*(\Ga'\dsum\Ga^F))=\om-\om'$.
\end{dfn}

\begin{lem} The relation between triples in Definition \ref{DEF.triplerel} is an equivalence relation.
\end{lem}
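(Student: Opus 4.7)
The plan is to verify reflexivity, symmetry, and transitivity separately, and the whole argument rests on three facts about $\CS$: (i) \emph{direct-sum additivity} $\CS(\Ga_0\oplus\Ga_0',\Ga_1\oplus\Ga_1')=\CS(\Ga_0,\Ga_1)+\CS(\Ga_0',\Ga_1')$, which follows from Proposition~\ref{PRP.ch.is.additive} after integrating along the fiber; (ii) the \emph{cocycle identity} $\CS(\Ga_0,\Ga_1)+\CS(\Ga_1,\Ga_2)=\CS(\Ga_0,\Ga_2)$ from Remark~\ref{RMK.CS.insertion.of.connection}, which in particular (taking $\Ga_0=\Ga_2$) gives $\CS(\Ga,\Ga)=0$; and (iii) \emph{naturality under $\la$-twisted bundle isomorphisms}, $\CS(\vph^*\Ga_0,\vph^*\Ga_1)=\CS(\Ga_0,\Ga_1)$, which is Proposition~\ref{PRP.cs.pullback.by.bundleisom}.

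For reflexivity I would take $F=\mathscr{O}$ with the zero connection and $\vph=\mathrm{id}_E$; then (ii) gives $\CS(\Ga,\Ga)=0=\om-\om$. For symmetry, given witnesses $(F,\Ga^F,\vph)$ for $(E,\Ga,\om)\sim(E',\Ga',\om')$, I would reuse the same $F$ together with $\vph^{-1}$. Applying (iii) to pull the path back by $\vph$ yields $\CS(\Ga'\oplus\Ga^F,(\vph^{-1})^*(\Ga\oplus\Ga^F))=\CS(\vph^*(\Ga'\oplus\Ga^F),\Ga\oplus\Ga^F)$, and (ii) with common endpoint $\Ga\oplus\Ga^F$ turns this into $-\CS(\Ga\oplus\Ga^F,\vph^*(\Ga'\oplus\Ga^F))=-(\om-\om')=\om'-\om$.

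For transitivity, let $(F,\Ga^F,\vph)$ witness $(E,\Ga,\om)\sim(E',\Ga',\om')$ and $(G,\Ga^G,\psi)$ witness $(E',\Ga',\om')\sim(E'',\Ga'',\om'')$. I would stabilize by $(F\oplus G,\Ga^F\oplus\Ga^G)$ and form the composite
\[
\Phi \;:=\; \sigma^{-1}\circ(\psi\oplus\mathrm{id}_F)\circ\sigma\circ(\vph\oplus\mathrm{id}_G)\colon E\oplus F\oplus G\lra E''\oplus F\oplus G,
\]
where $\sigma$ denotes the block-permutation isomorphism swapping the two right-hand summands; this is a well-defined morphism of $\la$-twisted bundles since $\la$ enters both cocycles $g^F\oplus g^G$ and $g^G\oplus g^F$ as the scalar $\la_{kji}\cdot\tbf{1}$. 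Inserting the intermediate connection $(\vph\oplus\mathrm{id}_G)^*(\Ga'\oplus\Ga^F\oplus\Ga^G)$ via (ii) splits the target CS into two summands. The first simplifies by (i), using $\CS(\Ga^G,\Ga^G)=0$, to $\CS(\Ga\oplus\Ga^F,\vph^*(\Ga'\oplus\Ga^F))=\om-\om'$. For the second I would peel off the common pullback $\vph\oplus\mathrm{id}_G$ with (iii) and then expand $(\sigma^{-1}\circ(\psi\oplus\mathrm{id}_F)\circ\sigma)^*(\Ga''\oplus\Ga^F\oplus\Ga^G)$; tracking the permutations shows that it equals $\tilde\Ga'\oplus\Ga^F\oplus\tilde\Ga^G$, where $\psi^*(\Ga''\oplus\Ga^G)=:\tilde\Ga'\oplus\tilde\Ga^G$, whence (i) reduces this summand to $\CS(\Ga'\oplus\Ga^G,\psi^*(\Ga''\oplus\Ga^G))=\om'-\om''$. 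Summing gives $\om-\om''$, as required. The main obstacle is precisely this bookkeeping: because the two hypotheses stabilize by different auxiliary bundles $F$ and $G$, one is forced to introduce the swap $\sigma$ so that $\psi$ can act on its native summand $E'\oplus G$ inside the enlarged direct sum $E'\oplus F\oplus G$; once the rearrangement is explicit, the verification reduces mechanically to (i)--(iii).
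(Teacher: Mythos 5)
Your argument is correct and is essentially the paper's own proof: the same witnesses for reflexivity and symmetry, and for transitivity the same stabilization by the direct sum of the two auxiliary bundles with the same swap-conjugated composite isomorphism, reduced by the cocycle identity, pullback-invariance (Proposition \ref{PRP.cs.pullback.by.bundleisom}), and additivity of $\CS$. The only imprecision is cosmetic: $\psi^*(\Ga''\dsum\Ga^G)$ need not itself split as a direct sum $\tilde\Ga'\dsum\tilde\Ga^G$, but your reduction still goes through by conjugating with $\sigma$ via Proposition \ref{PRP.cs.pullback.by.bundleisom} and then applying additivity with $E'\dsum G$ treated as a single block, which is exactly the computation in the paper.
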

\begin{proof} The relation is reflexive since $\cs$ of a loop is $(d+H)$-exact. For symmetry, suppose $(E,\Ga,\om)$ and $(E',\Ga',\om')$ are equivalent, i.e., there exists a $\la$-twisted vector bundle with connection $(F,\Ga^F)$ whose connection is compatible with $\widehat\la$ such that there is an isomorphism of $\la$-twisted vector bundles $\vph:E\dsum F \srl{\isom}\ra E'\dsum F$, and $\CS(\Ga\dsum \Ga^F,\vph^*(\Ga'\dsum \Ga^F))=\om-\om'$. By Proposition \ref{PRP.cs.pullback.by.bundleisom},
$$\CS(\Ga\dsum \Ga^F,\vph^*(\Ga'\dsum \Ga^F))=\CS((\vph^{-1})^*(\Ga\dsum \Ga^F),\Ga'\dsum \Ga^F).$$ This proves the symmetry. For transitivity, suppose $(E,\Ga,\om)$ is equivalent to $(E',\Ga',\om')$ and $(E',\Ga',\om')$ is equivalent to $(E'',\Ga'',\om'')$, i.e., there exists a $\la$-twisted vector bundle with connection $(F,\Ga^F)$ whose connection is compatible with $\widehat\la$ such that there is an isomorphism of $\la$-twisted vector bundles $\vph:E\dsum F \srl{\isom}\ra E'\dsum F$, and $\CS(\Ga\dsum \Ga^F,\vph^*(\Ga'\dsum \Ga^F))=\om-\om'$, and there exists a $\la$-twisted vector bundle with connection $(F',\Ga^{F'})$ whose connection is compatible with $\widehat\la$ such that there is an isomorphism of $\la$-twisted vector bundles $\vph':E'\dsum F' \srl{\isom}\ra E''\dsum F'$, and $\CS(\Ga'\dsum \Ga^{F'},{\vph'}^*(\Ga''\dsum \Ga^{F'}))=\om'-\om''$. Then by taking the $\la$-twisted vector bundle with connection $(F\dsum F',\Ga^{F}\dsum \Ga^{F'})$, the isomorphism of $\la$-twisted vector bundles $\psi:E\dsum F\dsum F'\ra E''\dsum F\dsum F'$ is defined by the composition$$
E\dsum F\dsum F'\srl{\vph\dsum\tbf{1}}\lra E'\dsum F\dsum F' \srl{\tbf{1}\dsum\si}\lra E'\dsum F'\dsum F \srl{\vph'\dsum\tbf{1}}\lra E''\dsum F'\dsum F \srl{\tbf{1}\dsum\si^{-1}}\lra E''\dsum F\dsum F',$$ each of which is an isomorphism, and $\si$ is the canonical $\la$-twisted vector bundle isomorphism $F\dsum F'\ra F'\dsum F$. Furthermore, \beqs &\CS(\Ga\dsum \Ga^F\dsum \Ga^{F'},\psi^*(\Ga''\dsum \Ga^F\dsum \Ga^{F'}))\\&=\CS(\Ga\dsum \Ga^F\dsum \Ga^{F'},(\vph\dsum\tbf{1})^*(\Ga'\dsum \Ga^F\dsum \Ga^{F'}))\\&\qquad\qquad+\CS((\vph\dsum\tbf{1})^*(\Ga'\dsum \Ga^F\dsum \Ga^{F'}),\psi^*(\Ga''\dsum \Ga^F\dsum \Ga^{F'})) \quad\trm{by Remark \ref{RMK.CS.insertion.of.connection}}\\
&\srl{*}=\om-\om'+\CS(\Ga'\dsum \Ga^F\dsum \Ga^{F'},\big((\tbf{1}\dsum\si^{-1})\circ (\vph'\dsum \tbf{1}) \circ(\tbf{1}\dsum\si)\big)^*(\Ga''\dsum \Ga^{F}\dsum \Ga^{F'}))\\
&\srl{**}=\om-\om'+\CS(\Ga'\dsum \Ga^{F'}\dsum \Ga^{F},\big((\tbf{1}\dsum\si^{-1})\circ (\vph'\dsum \tbf{1}) \big)^*(\Ga''\dsum \Ga^{F}\dsum \Ga^{F'}))\\
&=\om-\om'+\CS(\Ga'\dsum \Ga^{F'}\dsum \Ga^{F}, (\vph'\dsum \tbf{1})^*(\Ga''\dsum \Ga^{F'}\dsum \Ga^{F}))\\&=\om-\om'+\om'-\om''=\om-\om'',
\eeqs
At $*$ and $**$, we have used Proposition \ref{PRP.cs.pullback.by.bundleisom} for the twisted bundle isomorphism $(\vph\dsum\tbf{1})^{-1}$ and $(\tbf{1}\dsum\si^{-1})^{-1}$, respectively. Hence $(E,\Ga,\om)$ is equivalent to $(E'',\Ga'',\om'')$.
\end{proof}

\begin{lem} Let $[(E,\Ga^E,\om)]$ and $[(F,\Ga^F,\eta)]$ be equivalence classes of $K^0(\cU;\widehat\la)$-generators. The equivalence class of the $K^0(\cU;\widehat\la)$-generator $(E\dsum F,\Ga^E\dsum\Ga^F,\om+\eta)$ is independent of the choice of representatives of $[(E,\Ga^E,\om)]$ and $[(F,\Ga^F,\eta)]$.
\end{lem}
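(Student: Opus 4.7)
The plan is to show that if $(E,\Ga^E,\om)\sim(E',\Ga^{E'},\om')$ via $(G,\Ga^G)$ and a twisted bundle isomorphism $\vph\colon E\dsum G\to E'\dsum G$, and $(F,\Ga^F,\eta)\sim(F',\Ga^{F'},\eta')$ via $(G',\Ga^{G'})$ and $\psi\colon F\dsum G'\to F'\dsum G'$, then the sum generators are equivalent. The natural choice of auxiliary bundle is $G\dsum G'$ with connection $\Ga^G\dsum\Ga^{G'}$, and the natural choice of isomorphism $\Psi\colon E\dsum F\dsum G\dsum G'\to E'\dsum F'\dsum G\dsum G'$ is the composition
\[
E\dsum F\dsum G\dsum G'\srl{\tbf{1}\dsum\si\dsum\tbf{1}}\lra E\dsum G\dsum F\dsum G'\srl{\vph\dsum\psi}\lra E'\dsum G\dsum F'\dsum G'\srl{\tbf{1}\dsum\si^{-1}\dsum\tbf{1}}\lra E'\dsum F'\dsum G\dsum G',
\]
where $\si$ denotes the canonical shuffle isomorphism of $\la$-twisted vector bundles. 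Each arrow is a $\la$-twisted bundle isomorphism, so $\Psi$ is.

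Next I would compute $\CS(\Ga^E\dsum\Ga^F\dsum\Ga^G\dsum\Ga^{G'},\Psi^*(\Ga^{E'}\dsum\Ga^{F'}\dsum\Ga^G\dsum\Ga^{G'}))$. The plan is to split this along the composition using the concatenation property of $\CS$ from Remark \ref{RMK.CS.insertion.of.connection}, then strip off the shuffle isomorphisms using Proposition \ref{PRP.cs.pullback.by.bundleisom} (the shuffles are bundle isomorphisms, and pulling back a path of connections through them does not change $\cs$, hence does not change $\CS$). After stripping the shuffles, the inner piece becomes $\CS(\Ga^E\dsum\Ga^G\dsum\Ga^F\dsum\Ga^{G'},(\vph\dsum\psi)^*(\Ga^{E'}\dsum\Ga^G\dsum\Ga^{F'}\dsum\Ga^{G'}))$.

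Now I would invoke additivity of the twisted Chern character form (Proposition \ref{PRP.ch.is.additive}) applied to the $\widehat{p^*\la}$-compatible connection on the interpolating bundle $p^*(E\dsum G\dsum F\dsum G')$ over $X\times I$. Since $\ch$ is additive on direct sums at the form level, and integration along the fiber is linear, it follows that $\cs$ is additive, and therefore so is $\CS$. This splits the above into
\[
\CS(\Ga^E\dsum\Ga^G,\vph^*(\Ga^{E'}\dsum\Ga^G))+\CS(\Ga^F\dsum\Ga^{G'},\psi^*(\Ga^{F'}\dsum\Ga^{G'}))=(\om-\om')+(\eta-\eta'),
\]
which equals $(\om+\eta)-(\om'+\eta')$, as required.

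The main technical obstacle is really just bookkeeping: ensuring that the auxiliary bundle, the assembled isomorphism $\Psi$, and the successive applications of Remark \ref{RMK.CS.insertion.of.connection} and Proposition \ref{PRP.cs.pullback.by.bundleisom} line up correctly so that the shuffle isomorphisms disappear and additivity of $\CS$ can be applied cleanly. A minor preliminary step I would record explicitly, if not already available, is that $\CS$ is additive under direct sums of bundles equipped with compatible paths of connections; this is immediate from Proposition \ref{PRP.ch.is.additive} and linearity of fiber integration.
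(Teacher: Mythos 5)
Your proposal is correct, and it uses exactly the toolkit the paper deploys in the proof of the preceding lemma (transitivity of the equivalence relation), which is what the omitted proof of this lemma would be: form the auxiliary bundle $G\dsum G'$, compose $\vph\dsum\psi$ with shuffle isomorphisms, split $\CS$ via Remark \ref{RMK.CS.insertion.of.connection}, remove the shuffles via Proposition \ref{PRP.cs.pullback.by.bundleisom}, and conclude by additivity of $\cs$, which as you note follows from Proposition \ref{PRP.ch.is.additive} and linearity of integration along the fiber. The only small point worth recording alongside the CS-additivity you already flag is that the shuffles are given by constant permutation matrices, so their pullbacks merely permute the blocks of block-diagonal connections; this is what makes the stripping step exact and leaves the inner term $\CS(\Ga^E\dsum\Ga^G\dsum\Ga^F\dsum\Ga^{G'},(\vph\dsum\psi)^*(\Ga^{E'}\dsum\Ga^G\dsum\Ga^{F'}\dsum\Ga^{G'}))$ as you state.
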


%%%%%%%%%%%%%%%%%%%%%%%% Removal of proof %%%%%%%%%%%%%%
%\begin{proof} It suffices to show that if two $\widehat K^0(\cU;\widehat\la)$-generators $(E,\Ga^E,\om)$ and $(F,\Ga^F,\eta)$ are equivalent, then $(E \dsum F,\Ga^{E}\dsum\Ga^F,\om+\eta)$ is equivalent to $(\wtl E \dsum F,\Ga^{\wtl E}\dsum\Ga^F,\wtl\om+\eta)$. Suppose there exists a $\la$-twisted bundle with connection $(G,\Ga^G)$ whose connection is compatible with $\widehat\la$ and a $\la$-twisted vector bundle isomorphism $\vph:E\dsum G\ra \wtl E \dsum G$ such that $\CS(\Ga^E\dsum\Ga^G,\vph^*(\Ga^{\wtl E}\dsum \Ga^G))=\om-\wtl\om$. Then there exists a $\la$-twisted vector bundle with connection $(G,\na^G)$ whose connection is compatible with $\widehat\la$ and a $\la$-twisted vector bundle isomorphism $\vph\dsum\tbf{1}_F:E\dsum G\dsum F\ra \wtl E\dsum G\dsum F$ such that $\CS(\Ga^E\dsum \Ga^G\dsum \Ga^F,\vph^*(\Ga^{\wtl E}\dsum \Ga^G)\dsum \Ga^F)=\om-\wtl\om=\om+\eta-(\wtl\om+\eta)$. Hence $(E \dsum F,\Ga^{E}\dsum\Ga^F,\om+\eta)$ is equivalent to $(\wtl E \dsum F,\Ga^{\wtl E}\dsum\Ga^F,\wtl\om+\eta)$.
%\end{proof}
%%%%%%%%%%%%%%%%%%%%%%%% END OF Removal of proof %%%%%%%%%%%%%%

\begin{dfn} The \textbf{addition} $+$ between two equivalence classes of $\widehat K^0(\cU;\widehat\la)$-generators is defined by  $[(E,\Ga^E,\om)]+[(F,\Ga^F,\eta)]:=[(E\dsum F,\Ga^E\dsum\Ga^F,\om+\eta)]$.
\end{dfn}

Hence the set of all equivalence classes of $\widehat K^0(\cU;\widehat\la)$-generators forms a commutative monoid $(\f{G},+)$.

\begin{dfn} \label{DEF.BSKtype.twisted.diff.even.K} Let  $\widehat\la\in\texttt{Twist}_{\widehat K}^{\trm{tor}}(\cU)$. The \textbf{twisted differential $K$-group} is $$\widehat K^0(\cU,\widehat\la):=K(\f{G}),$$ where $K$ denotes the group completion functor from commutative monoid to abelian groups.
\end{dfn}

\subsection{Functoriality}\label{SEC.functoriality}
\begin{lem}\label{LEM.functoriality} Let $\widehat\la$ be a differential twist, $E=(\cU,\{g_{ji}\},\{\la_{kji}\})$ a $\la$-twisted vector bundle of rank $n$, and $\Ga=\{\Ga_i\}_{i\in\La}$ connection on $E$ compatible with $\widehat\la$. Also let $f:(Y,\cV) \ra (X,\cU)$ be a   map with $\cV=f^{-1}\cU$. If two triples $(E,\Ga,\om)$ and $(E',\Ga',\om')$ are equivalent, then $(f^*E,f^*\Ga,f^*\om)$ and $(f^*E',f^*\Ga',f^*\om')$ are equivalent.
\end{lem}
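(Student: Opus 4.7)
The plan is to simply pull back all the data witnessing the equivalence via $f$ and verify that the resulting data witnesses the equivalence of the pullback triples. The main ingredients are the naturality of connections, of the Chern character form (already proved in the paper), and of the twisted Chern--Simons form under pullback; once these are in place the argument is essentially a transport of the defining equality.

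Concretely, by hypothesis there exist a $\la$-twisted vector bundle with connection $(F,\Ga^F)$ whose connection is compatible with $\widehat\la$ and an isomorphism of $\la$-twisted vector bundles $\vph:E\dsum F\ra E'\dsum F$ with
\[
\CS(\Ga\dsum\Ga^F,\vph^*(\Ga'\dsum\Ga^F))=\om-\om' \pmod{\trm{Im}(d+H)}.
\]
First I would form $(f^*F,f^*\Ga^F)$, which by the pullback proposition for twisted vector bundles with connection is a $(\la\circ f)$-twisted vector bundle with connection compatible with $f^*\widehat\la$; and $f^*\vph:f^*E\dsum f^*F\ra f^*E'\dsum f^*F$, which is an isomorphism of $(\la\circ f)$-twisted vector bundles (pullback commutes with $\dsum$ componentwise). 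Thus $(f^*F,f^*\Ga^F)$ and $f^*\vph$ are valid candidates to witness equivalence of the two pulled-back triples.

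The crucial step is the naturality of $\CS$: for any two $\widehat\la$-compatible connections $\Ga_0,\Ga_1$ on a $\la$-twisted vector bundle $E$ and any smooth map $f:Y\ra X$ with $\cV=f^{-1}\cU$,
\[
f^*\CS(\Ga_0,\Ga_1)=\CS(f^*\Ga_0,f^*\Ga_1)\pmod{\trm{Im}(d+f^*H)}.
\]
To see this, choose a path $\ga:t\mapsto\Ga_t$ of $\widehat\la$-compatible connections on $E$ joining $\Ga_0$ and $\Ga_1$; then $f^*\ga:t\mapsto f^*\Ga_t$ is a path of $f^*\widehat\la$-compatible connections on $f^*E$. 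Denoting the base projections by $p_X:X\times I\ra X$ and $p_Y:Y\times I\ra Y$ and the fiberwise map by $\tilde f:=f\times\id_I$, we have $\wtl{f^*\ga}=\tilde f^*\wtl{\ga}$ as connections on $p_Y^*f^*E=\tilde f^*p_X^*E$, and the naturality of the Chern character form gives $\ch(\wtl{f^*\ga})=\tilde f^*\ch(\wtl\ga)$. Since $\tilde f$ restricts to the identity on fibers over each point of $Y$, pullback along $\tilde f$ commutes with integration along the interval fiber, whence
\[
\cs(f^*\ga)=\int_I \ch(\wtl{f^*\ga})=\int_I \tilde f^*\ch(\wtl\ga)=f^*\!\int_I \ch(\wtl\ga)=f^*\cs(\ga).
\]
Finally, $f^*$ carries $\trm{Im}(d+H)$ into $\trm{Im}(d+f^*H)$ (since $(d+f^*H)f^*\eta=f^*(d+H)\eta$), so the above equality descends to the quotients defining $\CS$.

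Combining these observations, applying $f^*$ to the defining equation for the original equivalence yields
\[
\CS\bigl(f^*\Ga\dsum f^*\Ga^F,\,(f^*\vph)^*(f^*\Ga'\dsum f^*\Ga^F)\bigr)=f^*\om-f^*\om'\pmod{\trm{Im}(d+f^*H)},
\]
exactly the equivalence of $(f^*E,f^*\Ga,f^*\om)$ and $(f^*E',f^*\Ga',f^*\om')$. The only step that requires any real thought is the commutation of $\tilde f^*$ with integration along the fiber, but this is a standard property of fiber integration applied to the Cartesian square $Y\times I\to X\times I$ over $Y\to X$.
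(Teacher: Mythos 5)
Your proof is correct. The paper in fact states this lemma without giving a proof, but your argument — pulling back the witnessing bundle $(F,\Ga^F)$, the isomorphism $\vph$, and a path of compatible connections, then showing $f^*\CS(\Ga_0,\Ga_1)=\CS(f^*\Ga_0,f^*\Ga_1)$ via naturality of $\ch$ under pullback and base change for integration along the fiber $I$, and noting $f^*$ carries $\trm{Im}(d+H)$ into $\trm{Im}(d+f^*H)$ — is precisely the routine verification the paper intends, consistent with its later remark that the functoriality of the odd twisted Chern character is proved "similarly."
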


\begin{prp}\label{LEM.functoriality.of.tw.diff.K}
Given a   map $f:(Y,\cV) \ra (X,\cU)$ with $\cV=f^{-1}\cU$, the assignment \beqs f^*:\widehat K^0(\cU,\widehat\la)&\ra \widehat K^0(\cV,f^*\widehat\la)\\
[(E,\Ga^E,\om)]-[(F,\Ga^F,\eta)] &\mapsto [(f^*E,f^*\Ga^E,f^*\om)]-[(f^*F,f^*\Ga^F,f^*\eta)]
\eeqs is a well-defined group homomorphism.
\end{prp}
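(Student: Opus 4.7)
The plan is to reduce everything to the already-established compatibility of pullback with the three ingredients (twisted bundle, compatible connection, differential form). First, I would verify that if $(E,\Ga^E,\om)$ is a $\widehat K^0(\cU;\widehat\la)$-generator, then $(f^*E,f^*\Ga^E,f^*\om)$ is a $\widehat K^0(\cV;f^*\widehat\la)$-generator. The twisted bundle and compatible connection pieces are handled by the pullback definition for twisted bundles (Section~\ref{SEC.2}) and the proposition that $f^*\Ga$ is compatible with $f^*\widehat\la$. For the form piece, I would check that $f^*$ descends to the quotient, i.e.\ sends $\Ima(d+H)$ into $\Ima(d+f^*H)$: this follows from the naturality identity $f^*\bigl((d+H)\eta\bigr)=(d+f^*H)(f^*\eta)$, which itself is immediate from $f^*d=df^*$ and $f^*(H\we\eta)=f^*H\we f^*\eta$.

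Next I would show the induced map on generators descends to equivalence classes. This is exactly Lemma~\ref{LEM.functoriality}, so no new work is required here. Then I would verify additivity at the level of generators: the equality
\[
(f^*E\dsum f^*F,\, f^*\Ga^E\dsum f^*\Ga^F,\, f^*\om+f^*\eta)=(f^*(E\dsum F),\, f^*(\Ga^E\dsum\Ga^F),\, f^*(\om+\eta))
\]
holds on the nose from the definitions of the direct sum of twisted bundles, the entry-wise pullback of connections, and the linearity of $f^*$ on forms. Consequently the assignment $[(E,\Ga^E,\om)]\mapsto[(f^*E,f^*\Ga^E,f^*\om)]$ is a well-defined monoid homomorphism $\f G(\cU,\widehat\la)\ra\f G(\cV,f^*\widehat\la)$.

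Finally, the universal property of the group completion functor $K$ (Definition~\ref{DEF.BSKtype.twisted.diff.even.K}) extends this monoid homomorphism uniquely to a group homomorphism
\[
f^*:\widehat K^0(\cU,\widehat\la)\ra\widehat K^0(\cV,f^*\widehat\la),
\]
sending $[(E,\Ga^E,\om)]-[(F,\Ga^F,\eta)]$ to $[(f^*E,f^*\Ga^E,f^*\om)]-[(f^*F,f^*\Ga^F,f^*\eta)]$ as claimed. The main obstacle is really the one already resolved by Lemma~\ref{LEM.functoriality}, whose essential input is that pullback commutes with the Chern--Simons transgression $\cs$ (from $f^*\ch(\wtl\Ga)=\ch(f^*\wtl\Ga)$ and naturality of integration along the fiber over the product $Y\times I$); everything else here is formal bookkeeping.
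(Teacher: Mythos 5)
Your proof is correct and follows the route the paper itself intends: the paper states Lemma~\ref{LEM.functoriality} immediately before this proposition precisely to handle descent to equivalence classes, and leaves the remaining steps (pullback of a generator is a generator since $f^*\Ga$ is compatible with $f^*\widehat\la$ and $f^*$ maps $\trm{Im}(d+H)$ into $\trm{Im}(d+f^*H)$, strict compatibility with $\dsum$, and the universal property of the group completion) as routine bookkeeping, which is exactly what you carry out. No gaps.
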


Let \textbf{Man} be the category whose objects are connected compact smooth manifolds equipped with an open cover. A morphism from $(Y,\cV)$ to $(X,\cU)$ is a smooth map satisfying $\cV=f^{-1}(\cU)$. Also let \textbf{Ab} be the category of abelian groups.
\begin{cor} $\widehat K^0(-,\widehat\la):\tbf{Man}^{\trm{op}}\ra \tbf{Ab}$ is a functor.
\end{cor}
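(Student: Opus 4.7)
The plan is to combine Proposition~\ref{LEM.functoriality.of.tw.diff.K}, which supplies the group homomorphism $f^*$ attached to each morphism in $\tbf{Man}$, with elementary verifications of the identity and composition axioms at the level of representative triples $(E,\Ga,\om)$. Since $f^*$ is already known to be well-defined on equivalence classes and to respect the monoid operation, it suffices to check the axioms on $\widehat K^0(\cU,\widehat\la)$-generators and then extend by linearity under the Grothendieck completion.

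For the identity axiom, the identity map $\trm{id}:(X,\cU)\to(X,\cU)$ trivially satisfies $\trm{id}^{-1}\cU=\cU$, so it is a morphism in $\tbf{Man}$. Precomposing the cocycle data $\{g_{ji}\}$ and $\{\la_{kji}\}$ with $\trm{id}$ returns the same cocycles; the entrywise pullback connection $\trm{id}^*\Ga=\{\trm{id}^*\Ga_i\}$ coincides with $\Ga$; and $\trm{id}^*\om=\om$. Hence $\trm{id}^*[(E,\Ga,\om)]=[(E,\Ga,\om)]$ on generators, so the induced map on $\widehat K^0(\cU,\widehat\la)$ is the identity.

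For the composition axiom, take $g:(Z,\cW)\to(Y,\cV)$ and $f:(Y,\cV)\to(X,\cU)$ with $\cW=g^{-1}\cV$ and $\cV=f^{-1}\cU$; then $\cW=(f\circ g)^{-1}\cU$, so the composite is a morphism in $\tbf{Man}$. On a generator $(E,\Ga,\om)$ the three layers of data pull back contravariantly and functorially: the transition cocycles $\{g_{ji}\circ f\circ g\}$ and $\{\la_{kji}\circ f\circ g\}$ defining $(f\circ g)^*E$ agree with those defining $g^*f^*E$; the entrywise pullback rule yields $(f\circ g)^*\Ga=g^*f^*\Ga$; and the standard contravariant functoriality of pullback of differential forms gives $(f\circ g)^*\om=g^*f^*\om$. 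Therefore $(f\circ g)^*[(E,\Ga,\om)]=(g^*\circ f^*)[(E,\Ga,\om)]$ on generators, and extending this equality across $\widehat K^0(\cU,\widehat\la)$ by additivity completes the proof.

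The main obstacle, if any, is purely notational bookkeeping---carefully tracking twists, connections, and base covers through the composition---rather than a substantive computation. All the nontrivial content has been absorbed into Proposition~\ref{LEM.functoriality.of.tw.diff.K} together with the contravariant functoriality of ordinary pullback of smooth forms, so nothing delicate in the twisted setting intervenes.
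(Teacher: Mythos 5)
Your proposal is correct and is essentially the argument the paper intends: the corollary is stated without proof as an immediate consequence of Proposition \ref{LEM.functoriality.of.tw.diff.K}, with the identity and composition axioms holding on the nose at the level of triples $(E,\Ga,\om)$ because pullback of cocycles, connections, and forms is strictly contravariantly functorial. Your spelled-out verification (including the cover bookkeeping $\cW=g^{-1}\cV=(f\circ g)^{-1}\cU$) is exactly the routine check the paper leaves to the reader.
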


\subsection{Naturality of twists}\label{SEC.naturality.of.twist}
\begin{prp}\label{PRP.naturality.of.twists} Let $\widehat\la=(\{\la_{kji}\},\{A_{ji}\},\{B_i\})$ and $\widehat\la'=(\{\la'_{kji}\},\{A'_{ji}\},\{B'_i\})$ be any two objects of $\texttt{Twist}_{\widehat K}^{\trm{tor}}(\cU)$ satisfying that $\widehat\la'=\widehat\la+D\widehat\al$ for some $\widehat\al=(\{\chi_{ji}\},\{\Pi_i\})$. Let $E=(\cU,\{g_{ji}\},\{\la_{kji}\})$ be a $\la$-twisted vector bundle of rank $n$ and $\Ga=\{\Ga_i\}_{i\in\La}$ a connection on $E$ compatible with $\widehat\la$. Define:\beq \label{EQN.naturality.of.twists}
E' &:= (\cU,\chi_{ji}g_{ji},\la'_{kji})\\
\Ga' &:= \{\Ga'_i\}_{i\in\La}\trm{ where }\Ga'_i:=\Ga_i+\Pi_i\cdot\textbf{1}\\
\om' &:=\om
\eeq (1) The assignment
\beqs \phi_{\widehat\al}:\widehat K^0(\cU;\widehat\la)&\srl{\isom}\ra \widehat K^0(\cU;\widehat\la')\\ [(E,\Ga,\om)]-[(F,\na,\eta)]&\mapsto [(E',\Ga',\om')]-[(F',\na',\eta')]
\eeqs is an induced group isomorphism that is natural in $\cU$.

(2) Let $\xi\in\Om^2(X;i\R)$. The assignment
\beqs \Xi:\widehat K^0(\cU;\widehat\la)&\srl{\isom}\ra \widehat K^0(\cU;\widehat\la_\xi)\\ [(E,\Ga,\om)]-[(F,\na,\eta)]&\mapsto [(E,\Ga_\xi,\om\we\exp(-\xi))]-[(F,\na_\xi,\eta\we\exp(-\xi))]
\eeqs is a group isomorphism that is natural in $\cU$.
\end{prp}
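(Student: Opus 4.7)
The plan is to verify each statement by (i) checking that the assignment is well defined on representative triples, (ii) showing that it descends to equivalence classes and is a monoid homomorphism, (iii) extending to the Grothendieck completion, and (iv) exhibiting an explicit inverse; naturality in $\cU$ will then be immediate from the formulas.

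For part (1), I would first check that $E' = (\cU, \{\chi_{ji} g_{ji}\}, \{\la'_{kji}\})$ satisfies the $\la'$-twisted cocycle identity, which is an algebraic consequence of $\la'_{kji} = \la_{kji} (\de\chi)_{kji}$. Next I would verify that $\Ga'_i = \Ga_i + \Pi_i \cdot \tbf{1}$ is compatible with $\widehat\la'$ by substituting into \eqref{EQN.twisted.connection.local.rel} and matching with the $1$-form part of $\widehat\la' = \widehat\la + D\widehat\al$. Since the $3$-curvatures of $\widehat\la$ and $\widehat\la'$ coincide, the differential form $\om$ has the same meaning in both twisted de Rham quotients. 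Well-definedness on equivalence classes reduces to showing that Chern-Simons forms are preserved under the assignment: applying Proposition \ref{PRP.naturality.of.twist.chern.char.the.same} fiberwise on $X \times I$ to the connection $\wtl\Ga$ of Definition \ref{DFN.TwistedChernSimons} yields $\ch(\wtl\Ga) = \ch(\wtl{\Ga'})$, and integrating along the fiber gives $\cs(\ga) = \cs(\ga')$. Direct sums commute with the assignment because the correction $\Pi_i \cdot \tbf{1}$ is diagonal, and bundle morphisms are identified via the equivalence of categories of Remark \ref{RMK.canon.isom.two.different.twisting.K.gp}. The inverse is $\phi_{-\widehat\al}$, and naturality in $\cU$ follows because all operations commute with pullback of the Deligne data.

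For part (2), I would apply Proposition \ref{PRP.change.of.ch.under.change.of.gerbeconnection2forms} with $-\xi$ in place of $\xi$ to obtain $\ch(\Ga_\xi) = \ch(\Ga) \we \exp(-\xi)$, and then apply it on $X \times I$ and integrate along the fiber to deduce $\cs_\xi(\ga_\xi) = \cs(\ga) \we \exp(-\xi)$. The remaining substantive task is to check that multiplication by $\exp(-\xi)$ is a chain map from $(\Om^\bl(X; \C), d + H)$ to $(\Om^\bl(X; \C), d + H + d\xi)$; a direct graded-Leibniz computation, using that $d\xi$ is a $3$-form so the contributions $\om \we d\xi$ and $d\xi \we \om$ cancel under appropriate sign accounting, shows that $(d + H + d\xi)(\om \we \exp(-\xi)) = ((d + H)\om) \we \exp(-\xi)$. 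Consequently $\trm{Im}(d + H) \we \exp(-\xi) \subseteq \trm{Im}(d + H + d\xi)$, which is exactly what is needed for the assignment to descend to equivalence classes. The inverse $\Xi^{-1}$ is given by the analogous assignment with $-\xi$, and naturality in $\cU$ is immediate.

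The main obstacle I anticipate is bookkeeping rather than conceptual: carefully tracking signs in the Deligne differential, checking that the $1$-form adjustment encoded in $\widehat\la' - \widehat\la$ combines correctly with $\Pi_i \cdot \tbf{1}$ in the compatibility relation \eqref{EQN.twisted.connection.local.rel}, and identifying bundle morphisms across the equivalence of categories $\tbf{Bun}(\cU, \la) \isom \tbf{Bun}(\cU, \la')$ of Remark \ref{RMK.canon.isom.two.different.twisting.K.gp}. Once these conventions are fixed consistently with Definition \ref{DFN.GerbeWithConnection}, each remaining step reduces to a routine application of the Chern-Weil and transgression results already established in Section \ref{SEC.3}.
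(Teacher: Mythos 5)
Your plan is correct and takes essentially the same route as the paper: compatibility of $\Ga'$ with $\widehat\la'$ is checked directly, the same $\vph$ is reused as a $\la'$-twisted isomorphism, and preservation (resp.\ $\exp(-\xi)$-twisting) of the Chern--Simons terms is obtained by applying Propositions \ref{PRP.naturality.of.twist.chern.char.the.same} and \ref{PRP.change.of.ch.under.change.of.gerbeconnection2forms} to the interpolating connection on $X\times I$ and integrating along the fiber, which is exactly what the paper's displayed computations do. Your explicit verification that multiplication by $\exp(-\xi)$ intertwines $d+H$ with $d+H+d\xi$ merely makes precise a step the paper leaves implicit.
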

\begin{rmk} The family $\Ga'$ above is a connection on  the $\la'$-twisted vector bundle $E'$ compatible with $\widehat{\la}'$: \beqs
& g_{ji}^{-1}\chi_{ji}^{-1}\Ga_j'\chi_{ji}g_{ji}+g_{ji}^{-1}\chi_{ji}^{-1}d(\chi_{ji}g_{ji})-A_{ji}'\cdot\tbf{1}\\&\qquad=g_{ji}^{-1}\Ga_jg_{ji}+\Pi_j\cdot\tbf{1}+\chi_{ji}^{-1}d\chi_{ji}\cdot\tbf{1}+g_{ji}^{-1}dg_{ji}-(A_{ji}+\Pi_j-\Pi_i+d\log\chi_{ji})\cdot\tbf{1}\\&\qquad=\Ga_i+\Pi_i\cdot\tbf{1}=\Ga_i'.
\eeqs
\end{rmk}
\begin{proof}[Proof of Proposition \ref{PRP.naturality.of.twists}] (1) Suppose $(E,\Ga,\om)\sim(\wbar E,\wbar \Ga,\wbar \om)$, i.e., there exists a twisted vector bundle $F$ and a connection $\Ga^F$ compatible with $\widehat\la$ and a $\la$-twisted vector bundle isomorphism $\vph=\{\vph_i\}:E\dsum F\ra \wbar E\dsum F$ such that $$\CS(\Ga\dsum\Ga^F,\vph^*(\wbar\Ga\dsum \Ga^F))=\om-\wbar\om.$$ We verify that $(E',\Ga',\om')$ and $(\wbar E',\wbar \Ga',\wbar \om')$ are equivalent so that well-definedness of the map follows. We take a $\la'$-twisted vector bundle $F'$ and a connection $\Ga'^F$ compatible with $\widehat\la'$ by applying the same rule in \eqref{EQN.naturality.of.twists} to $(F,\Ga^F)$. There exists a $\la'$-twisted vector bundle isomorphism $\vph=\{\vph_i\}:E'\dsum F'\ra \wbar E'\dsum F'$ defined exactly the same as the above $\vph$.\footnote{Let $\wbar g_{ji}$, $h_{ji}$ be transition maps of $\wbar E$ and $F$, respectively. Since $\vph$ is an isomorphism, we have $$\vph_j(x)\big(\wbar g_{ji}(x)\dsum h_{ji}(x)\big)=\big(\wbar g_{ji}(x)\dsum h_{ji}(x)\big)\vph_i(x)$$ for all $x\in U_{ij}$. From this we have $$\vph_j(x)\big(\wbar g_{ji}(x)\chi_{ji}(x)\dsum h_{ji}(x)\chi_{ji}(x)\big)=\big(\wbar g_{ji}(x)\chi_{ji}(x)\dsum h_{ji}(x)\chi_{ji}(x)\big)\vph_i(x)$$ for all $x\in U_{ij}$.} We have to show that  $$\CS(\Ga'\dsum\Ga'^F,\vph^*(\wbar\Ga'\dsum \Ga'^F))=\om'-\wbar\om'.$$

Suppose $\wtl\Ga$ is a connection on $p^*(E\dsum F)$ over $X\times I$ defined by a path of connections joining $\Ga\dsum\Ga^F$ and $\vph^*(\wbar\Ga\dsum \Ga^F)$ on $E\dsum F$ over $X$. By definition, $$\cs(\Ga^t):=\int_I \ch(\wtl \Ga)=\int_I \trm{rank}(E\dsum F)+\sum_{m=1}^\infty\frac{1}{m!}\tr(\wtl R_i-p^*B_i\cdot\tbf{1})^m.$$

We see that:\beq\label{EQN.naturality.of.twist.welldef} \CS(\Ga'\dsum\Ga'^F,\vph^*(\wbar\Ga'\dsum \Ga'^F))&=\CS(\Ga\dsum\Ga^F+\Pi\cdot\tbf{1},\vph^*(\wbar\Ga\dsum \Ga^F+\Pi\cdot\tbf{1}))\\
&=\CS(\Ga\dsum\Ga^F+\Pi\cdot\tbf{1},\vph^*(\wbar\Ga\dsum \Ga^F)+\Pi\cdot\tbf{1})\\
&=\cs(\Ga^t+\Pi\cdot\tbf{1})\mod\trm{Im}(d+H)\\
&=\int_I \ch(\wtl \Ga+p^*\Pi\cdot\tbf{1})\mod\trm{Im}(d+H).
\eeq Since \beqs
d\big(\wtl \Ga_i+p^*\Pi_i\cdot\tbf{1}\big)+\big(\wtl \Ga_i+p^*\Pi_i\cdot\tbf{1}\big)\we \big(\wtl \Ga_i+p^*\Pi_i\cdot\tbf{1}\big)= \wtl R_i + p^*d\Pi_i\cdot\tbf{1},
\eeqs we have
\beqs
\int_I \ch(\wtl \Ga+p^*\Pi\cdot\tbf{1}) &= \int_I \trm{rank}(E'\dsum F')+\sum_{m=1}^\infty\frac{1}{m!}\tr(\wtl R_i+p^*d\Pi_i\cdot\tbf{1}-p^*B'_i\cdot\tbf{1})^m\\
&= \int_I \trm{rank}(E\dsum F)+\sum_{m=1}^\infty\frac{1}{m!}\tr(\wtl R_i-p^*B_i\cdot\tbf{1})^m\\
&=\int_I \ch(\wtl \Ga).
\eeqs

Hence the far RHS of \eqref{EQN.naturality.of.twist.welldef} is \beqs
\int_I \ch(\wtl \Ga)\Big/\trm{Im}(d+H)&=\CS(\Ga\dsum\Ga^F,\vph^*(\wbar\Ga\dsum \Ga^F))=\om-\wbar \om.
\eeqs

The map $\phi_{\widehat\al}$ being bijective, a group homomorphism and natural in $\cU$ is straightforward.

(2) We first show that, if $(E,\Ga,\om)\sim (\wbar E,\wbar \Ga,\wbar \om)$, then  $(E,\Ga_\xi,\om\we\exp(-\xi))\sim (\wbar E,\wbar \Ga_\xi,\wbar \om\we\exp(-\xi))$. By the premise, there exists a $\la$-twisted vector bundle $G$ and a connection $\Ga^G$ on $G$ compatible with $\widehat\la$ and an isomorphism $\vph:E\dsum G\ra \wbar E \dsum G$, such that $\om-\wbar\om=\CS(\Ga\dsum\Ga^G,\vph^*(\wbar\Ga\dsum\Ga^G))=\int_I \ch(\wtl\Ga) \mod\trm{Im}(d+H)$, where $\wtl\Ga$ is a connection on $p^*(E\dsum G)$ defined by pullback of connections on a straight line path joining $\Ga\dsum\Ga^G$ and $\vph^*(\wbar\Ga\dsum\Ga^G)$. Accordingly, \beqs
\CS(\Ga_\xi\dsum\Ga_\xi^G,\vph^*(\wbar \Ga_\xi\dsum\Ga_\xi^G))&=\int_I\big(\ch(\wtl\Ga)\we\exp(-p^*\xi)\big)\mod\trm{Im}(d+H+d\xi)\\
&=\Big(\int_I\ch(\wtl\Ga)\Big)\we\exp(-\xi)\mod\trm{Im}(d+H+d\xi)\\&= (\om-\wbar\om)\we\exp(-\xi) \mod\trm{Im}(d+H+d\xi).
\eeqs From this, well-definedness of the map $\Xi$ follows.
The map $\Xi$ being one-to one, onto, group homomorphism, and  natural in $\cU$ are all obvious.
\end{proof}

\subsection{The $I$ and $R$ map}\label{SEC.I.and.R.maps}
\begin{prp} Let $\widehat\la\in\texttt{Twist}_{\widehat K}^{\trm{tor}}(\cU)$. The assignment
\beqs
I:\widehat K^0(\cU,\widehat\la)&\ra K^0(\cU,\la)\\
[(E,\Ga^E,\om)]-[(F,\Ga^F,\eta)] &\mapsto [E]-[F]
\eeqs is a group homomorphism which is natural in $\cU$.
\end{prp}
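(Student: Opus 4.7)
The plan is to first verify that the assignment respects the equivalence relation on $\widehat K^0(\cU,\widehat\la)$-generators, then descend through the Grothendieck construction, and finally check the homomorphism and naturality properties, all of which are essentially formal.

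First I would check well-definedness on equivalence classes of generators. Suppose $(E,\Ga^E,\om)\sim (E',\Ga^{E'},\om')$. By Definition \ref{DEF.triplerel}, there exists a $\la$-twisted vector bundle with connection $(G,\Ga^G)$ and a $\la$-twisted vector bundle isomorphism $\vph:E\dsum G\ra E'\dsum G$. This isomorphism alone witnesses the equality $[E]=[E']$ in $K^0(\cU,\la)$, since in the Grothendieck group of $\vect(\cU,\la)$ two classes coincide as soon as there is a common stabilizer bundle after which the underlying bundles become isomorphic. Note that the Chern--Simons data $\om, \om'$ and the connections $\Ga^E, \Ga^{E'}, \Ga^G$ play no role at the underlying $K$-theoretic level, so the forgetful map descends cleanly.

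Next I would extend from generators to formal differences. The map $(E,\Ga^E,\om)\mapsto [E]$ is a monoid homomorphism from $\f{G}$ to $K^0(\cU,\la)$, because the addition on $\f{G}$ is defined via direct sum and $[E\dsum F]=[E]+[F]$ in $K^0(\cU,\la)$. By the universal property of group completion, this monoid homomorphism extends uniquely to a group homomorphism $I:\widehat K^0(\cU,\widehat\la)\ra K^0(\cU,\la)$ sending $[(E,\Ga^E,\om)]-[(F,\Ga^F,\eta)]$ to $[E]-[F]$. That $I$ is a group homomorphism is then automatic.

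Finally, for naturality in $\cU$, given a map $f:(Y,\cV)\ra (X,\cU)$ with $\cV=f^{-1}\cU$, one checks that the square
\[\xymatrix{
\widehat K^0(\cU,\widehat\la) \ar[r]^I \ar[d]_{f^*} & K^0(\cU,\la) \ar[d]^{f^*} \\
\widehat K^0(\cV,f^*\widehat\la) \ar[r]^I & K^0(\cV,\la\circ f)
}\]
commutes. On a generator $(E,\Ga^E,\om)$, both compositions produce $[f^*E]$: the clockwise route is $(E,\Ga^E,\om)\mapsto [E]\mapsto [f^*E]$, while the counterclockwise route is $(E,\Ga^E,\om)\mapsto (f^*E,f^*\Ga^E,f^*\om)\mapsto [f^*E]$. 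No step of the proof is a real obstacle; the only substantive point is the observation that Definition \ref{DEF.triplerel} bundles in a stable isomorphism of the underlying twisted vector bundles, which is precisely the relation defining equality in $K^0(\cU,\la)$.
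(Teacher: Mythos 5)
Your argument is correct and is precisely the routine verification the paper has in mind (it states this proposition without proof): the equivalence relation of Definition \ref{DEF.triplerel} contains a stable isomorphism of the underlying twisted bundles, which gives equality in the Grothendieck group, and the rest follows from additivity, the universal property of group completion, and compatibility of pullback with the forgetful assignment. Nothing is missing.
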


\begin{prp} Let $\widehat\la\in\texttt{Twist}_{\widehat K}^{\trm{tor}}(\cU)$. The assignment  \beqs
R:\widehat K^0(\cU,\widehat\la)&\ra \Om^{\trm{even}}(X;\C)\\
[(E,\Ga^E,\om)]-[(F,\Ga^F,\eta)] &\mapsto \ch(\Ga^E)+(d+H)\om-\ch(\Ga^F)-(d+H)\eta
\eeqs is a group homomorphism which is natural in $\cU$.
\end{prp}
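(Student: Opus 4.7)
The plan is to verify that $R$ is well-defined on equivalence classes, then that it is a monoid homomorphism on $\mathfrak{G}$ which extends by the universal property to a group homomorphism on $\widehat K^0(\cU,\widehat\la)$, and finally to check naturality.

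For well-definedness, I first need to ensure the formula $\ch(\Ga^E)+(d+H)\om$ makes sense: since $\om \in \Om^{\trm{odd}}(X;\C)/\trm{Im}(d+H)$ and $(d+H)^2=0$ (because $H$ is an odd-degree form so $H\wedge H=0$), the expression $(d+H)\om \in \Om^{\trm{even}}(X;\C)$ is well-defined. The main step is to show that if $(E,\Ga^E,\om)\sim(E',\Ga^{E'},\om')$, then $\ch(\Ga^E)+(d+H)\om = \ch(\Ga^{E'})+(d+H)\om'$. Unpacking the equivalence, there is $(F,\Ga^F)$ and an isomorphism $\vph:E\oplus F \to E'\oplus F$ with $\CS(\Ga^E\oplus\Ga^F,\vph^*(\Ga^{E'}\oplus\Ga^F))=\om-\om'$. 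Applying $(d+H)$ and using Proposition \ref{PRP.twisted.Chern-Weil} on the path of connections defining $\CS$, we get
\[
(d+H)(\om-\om') = \ch(\Ga^E\oplus\Ga^F) - \ch(\vph^*(\Ga^{E'}\oplus\Ga^F)).
\]
By Proposition \ref{PRP.naturality.ch.bundleisom}, $\ch(\vph^*(\Ga^{E'}\oplus\Ga^F))=\ch(\Ga^{E'}\oplus\Ga^F)$, and by additivity (Proposition \ref{PRP.ch.is.additive}) the $\ch(\Ga^F)$ terms cancel, leaving $(d+H)(\om-\om')=\ch(\Ga^E)-\ch(\Ga^{E'})$, as required.

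For the monoid structure, $R$ sends the sum $[(E,\Ga^E,\om)]+[(F,\Ga^F,\eta)] = [(E\oplus F,\Ga^E\oplus\Ga^F,\om+\eta)]$ to $\ch(\Ga^E\oplus\Ga^F)+(d+H)(\om+\eta)$, which equals $\ch(\Ga^E)+(d+H)\om + \ch(\Ga^F)+(d+H)\eta$ by Proposition \ref{PRP.ch.is.additive} and linearity of $(d+H)$. Hence $R$ is a monoid homomorphism $\mathfrak{G} \to \Om^{\trm{even}}(X;\C)$, which by the universal property of the Grothendieck construction extends uniquely to a group homomorphism $\widehat K^0(\cU,\widehat\la) \to \Om^{\trm{even}}(X;\C)$ acting on formal differences by the given formula.

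For naturality in $\cU$, given $f:(Y,\cV)\to(X,\cU)$ with $\cV=f^{-1}\cU$, the pullback $f^*\widehat\la$ has curvature $f^*H$. Using the earlier identity $f^*\ch(\Ga)=\ch(f^*\Ga)$ together with $f^*(d+H)\om = (d+f^*H)f^*\om$ (since pullback commutes with $d$ and with wedge product), we find
\[
R(f^*[(E,\Ga^E,\om)]) = \ch(f^*\Ga^E) + (d+f^*H)f^*\om = f^*\bigl(\ch(\Ga^E)+(d+H)\om\bigr) = f^*R([(E,\Ga^E,\om)]),
\]
and the analogous statement for formal differences is immediate. The main subtlety of the entire argument is the well-definedness step, where the twisted Chern–Weil identity from Proposition \ref{PRP.twisted.Chern-Weil} is precisely what converts the Chern–Simons defect $\om-\om'$ into the difference of twisted Chern character forms.
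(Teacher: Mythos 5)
The paper states this proposition without proof, and your argument supplies exactly the expected one: well-definedness follows from applying $(d+H)$ to the defining relation $\CS(\Ga^E\dsum\Ga^F,\vph^*(\Ga^{E'}\dsum\Ga^F))=\om-\om'$ via Proposition \ref{PRP.twisted.Chern-Weil}, together with Propositions \ref{PRP.naturality.ch.bundleisom} and \ref{PRP.ch.is.additive}, and then additivity plus the universal property of group completion and $f^*\ch(\Ga)=\ch(f^*\Ga)$ give the homomorphism property and naturality. The only small point worth tightening is that $(d+H)^2=0$ needs both $H\we H=0$ and $dH=0$ (closedness of the gerbe curvature); you cited only the former.
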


\subsection{Odd twisted Chern character forms}\label{APP.OddTwistedChernChar}
In this subsection, we define odd twisted Chern character forms which will be used in Sections \ref{SEC.a.map.and.exact.seq}, \ref{SEC.hexagon.diagram}, and \ref{SEC.Compatibility.with.change.of.twist.map}.

\begin{dfn}\label{DEF.oddtwistedchernchar} Let $X$ be a  manifold, $\cU$  a good open cover of $X$, $\widehat\la=(\{\la_{kji}\},\{A_{ji}\},\{B_i\})$ a $U(1)$-gerbe with connection on $\cU$ whose Dixmier-Douady class is torsion. Also let $E$ be a $\la$-twisted vector bundle,  $\phi$ an automorphism on $E$, and $\Ga$ a connection compatible with $\widehat\la$. The \textbf{total twisted odd Chern character form} of the triple $(E,\phi,\Ga)$ is $\cs\left(t\mapsto(1-t)\Ga^E+t\phi^*\Ga^E\right)$.
\end{dfn} By Propositons \ref{PRP.twisted.Chern-Weil} and \ref{PRP.naturality.ch.bundleisom}, $\cs\left(t\mapsto(1-t)\Ga^E+t\phi^*\Ga^E\right)$ represents an odd twisted cohomology class.

The odd twisted Chern character form is functorial.
\begin{prp} Given a map $f:(Y,\cV)\ra (X,\cU)$ with $\cV=f^{-1}(\cU)$, the following holds: $$\Ch(f^*E,(\phi\circ f), f^*\Ga)=f^*\Ch(E,\phi,\Ga)$$
\end{prp}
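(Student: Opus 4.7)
The plan is to reduce the statement to three standard naturality facts already in play: (i) pullback commutes with the straight-line interpolation of connections; (ii) pullback of the total twisted Chern character form (Proposition in Section 3); and (iii) naturality of integration along the fiber with respect to the map $\tilde f := f\times\mathrm{id}_I:Y\times I\to X\times I$.

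First I would unravel the definitions. Writing $\gamma_t := (1-t)\Ga + t\phi^*\Ga$ for the straight-line path of $\widehat\la$-compatible connections on $E$, the corresponding pulled-back path on $f^*E$ is $t\mapsto (1-t)f^*\Ga + t\,(f^*\phi)^*(f^*\Ga)$. Since local representatives of $f^*\phi$ are $\phi_i\circ f$ and pullback of the connection $1$-form $\phi^*\Ga$ is $(f^*\phi)^*(f^*\Ga) = f^*(\phi^*\Ga)$, one gets $(f^*\phi)^*(f^*\Ga) = f^*(\phi^*\Ga)$, and therefore the pulled-back path is exactly $f^*\gamma_t$, the entrywise pullback of the original path.

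Next, let $p:X\times I\to X$ and $q:Y\times I\to Y$ be the first-factor projections, so that $p\circ\tilde f = f\circ q$. By Definition \ref{DFN.TwistedChernSimons}, $\cs(\gamma) = \int_I \ch(\wtl\Ga)$, where $\wtl\Ga(x,t) = (p^*\gamma_t)(x,t)$ is a connection on $p^*E$. The analogous connection $\wtl{f^*\Ga}$ on $q^*(f^*E) = \tilde f^*(p^*E)$ induced by the path $f^*\gamma$ satisfies
\[
\wtl{f^*\Ga}(y,t) = \bigl(q^*(f^*\gamma_t)\bigr)(y,t) = \bigl(\tilde f^*(p^*\gamma_t)\bigr)(y,t) = (\tilde f^*\wtl\Ga)(y,t),
\]
so $\wtl{f^*\Ga} = \tilde f^*\wtl\Ga$. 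Applying the naturality of $\ch$ under pullback (which gives $\ch(\tilde f^*\wtl\Ga) = \tilde f^*\ch(\wtl\Ga)$) and the naturality of integration along the fiber for the trivial bundle $X\times I\to X$ (namely $\int_I \tilde f^*\om = f^*\int_I \om$), we compute
\[
\Ch(f^*E,\phi\circ f, f^*\Ga) = \cs(f^*\gamma) = \int_I \ch(\wtl{f^*\Ga}) = \int_I \tilde f^*\ch(\wtl\Ga) = f^*\!\int_I \ch(\wtl\Ga) = f^*\Ch(E,\phi,\Ga).
\]

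The only genuinely non-formal ingredient is naturality of fiber integration, which is a standard fact about integration of differential forms along the fibers of a trivial $I$-bundle (compatible with the cartesian square $\tilde f\circ q = q\circ\tilde f$, $p\circ\tilde f = f\circ q$); everything else is bookkeeping of local formulas. Since the ambient identification already takes place at the level of differential forms on $X$, there is no need to pass to cohomology classes, and the equality holds on the nose.
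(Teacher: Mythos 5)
Your proposal is correct and follows essentially the same route as the paper: the paper's proof consists of checking the key identity $f^*\phi^*\Ga = (\phi\circ f)^*(f^*\Ga)$ and then invoking the same pullback bookkeeping (naturality of $\ch$ and of fiber integration over $I$ via $\tilde f = f\times\mathrm{id}_I$) that underlies its functoriality lemma, which you have simply written out in full. The only blemish is the stray equation ``$\tilde f\circ q = q\circ\tilde f$'' near the end, which is a typo for the correct commutation $p\circ\tilde f = f\circ q$ that you already use.
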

\begin{proof} Note that $f^*\phi^*\Ga^E=(\phi\circ f)^{-1}\circ f^*\Ga^E\circ (\phi\circ f)+(\phi\circ f)^{-1}d(\phi\circ f)=(\phi\circ f)^*\Ga^E$. The proof of this statement is similar to the proof of Lemma \ref{LEM.functoriality}.
\end{proof}

The total odd twisted Chern character form respects change of differential twist in a manner that is similar to the even case. (Compare Propositions \ref{PRP.naturality.of.twist.chern.char.the.same} and \ref{PRP.change.of.ch.under.change.of.gerbeconnection2forms}.)

\begin{prp}\label{PRP.oddChernChar.naturality.of.twist} (1) Let  $\widehat\la$, $\widehat\la'$, $E$, $E'$, $\Ga$, and $\Ga'$ be as in Proposition \ref{PRP.naturality.of.twist.chern.char.the.same}. The following holds: $$\Ch(E',\phi,\Ga')=\Ch(E,\phi,\Ga)$$

(2) Let $\xi\in\Om^2(X;i\R)$, and $\widehat\la$ and $\widehat\la_\xi$ be as in Notation \ref{NTA.change.of.curving.by.a.global.2.form}. The following holds: $$\Ch(E,\phi,\Ga_\xi)=\Ch(E,\phi,\Ga)\we\exp(-\xi)$$
\end{prp}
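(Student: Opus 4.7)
The plan is to reduce both parts to the corresponding even-case statements (Propositions \ref{PRP.naturality.of.twist.chern.char.the.same} and \ref{PRP.change.of.ch.under.change.of.gerbeconnection2forms}) applied to the pulled-back bundle $p^*E$ over $X\times I$, then integrate along the fiber $I$. The key preliminary observation is that for any automorphism $\phi=\{\phi_i\}$ of a $\la$-twisted vector bundle, the pullback connection $\phi^*\Ga$ transforms additively under the shifts that appear in the statements: since $\phi^*\Ga_i = \phi_i^{-1}\Ga_i\phi_i + \phi_i^{-1}d\phi_i$ and $\phi_i$ is unitary (hence commutes with scalar $\Pi_i\cdot\tbf{1}$ up to conjugation), we get $\phi^*(\Ga_i+\Pi_i\cdot\tbf{1}) = \phi^*\Ga_i + \Pi_i\cdot\tbf{1}$ and $\phi^*\Ga_{\xi} = (\phi^*\Ga)_\xi$.

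For part (1), let $\wtl\Ga$ and $\wtl{\Ga'}$ denote the connections on $p^*E$ and $p^*E'$ over $X\times I$ obtained from the straight-line paths $(1-t)\Ga+t\phi^*\Ga$ and $(1-t)\Ga'+t\phi^*\Ga'$, respectively. By the observation above, at every $(x,t)\in X\times I$ one has $\wtl{\Ga'}_i(x,t) = \wtl\Ga_i(x,t) + p^*\Pi_i\cdot\tbf{1}$. Since $p^*\widehat\la' = p^*\widehat\la + D(p^*\widehat\al)$, Proposition \ref{PRP.naturality.of.twist.chern.char.the.same} (applied to $(p^*E,\wtl\Ga)$ and $(p^*E',\wtl{\Ga'})$ over $X\times I$) yields $\ch(\wtl{\Ga'})=\ch(\wtl\Ga)$. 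Integrating along $I$ then gives $\Ch(E',\phi,\Ga')=\int_I\ch(\wtl{\Ga'})=\int_I\ch(\wtl\Ga)=\Ch(E,\phi,\Ga)$.

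For part (2), observe that $\Ga_\xi$ is the same family of local $1$-forms as $\Ga$, only viewed as compatible with $\widehat\la_\xi$, and likewise for $\phi^*\Ga$. Hence the path $(1-t)\Ga_\xi+t\phi^*\Ga_\xi$ coincides with the $\xi$-shift of $(1-t)\Ga+t\phi^*\Ga$, so the associated connection on $p^*E$ is $\wtl\Ga$ regarded as compatible with $p^*\widehat\la_{p^*\xi}$. Proposition \ref{PRP.change.of.ch.under.change.of.gerbeconnection2forms} (with $\xi$ replaced by $p^*\xi$) gives
\[
\ch(\wtl\Ga_{-(-p^*\xi)}) \;=\; \ch(\wtl\Ga)\we\exp(-p^*\xi).
\]
Integrating along the fiber $I$ and applying the projection formula (using that $p^*\xi$ is pulled back from $X$, hence factors out of $\int_I$):
\[
\Ch(E,\phi,\Ga_\xi) \;=\; \int_I \ch(\wtl\Ga)\we\exp(-p^*\xi) \;=\; \Big(\!\int_I \ch(\wtl\Ga)\!\Big)\we\exp(-\xi) \;=\; \Ch(E,\phi,\Ga)\we\exp(-\xi).
\]

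The main (minor) obstacle is simply bookkeeping: verifying that the path-of-connections construction commutes with the two operations (adding the Deligne coboundary $D\widehat\al$ in part (1), and shifting the curving by $\xi$ in part (2)) in a way that lets us invoke the even-case propositions on the bundle $p^*E$ over $X\times I$. Once that is checked, integration along the fiber is automatic, and no new Chern--Weil calculation is needed.
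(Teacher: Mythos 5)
Your proof is correct and takes essentially the same route as the paper: both parts reduce to the even-case statements (the $\Pi\cdot\tbf{1}$-shift leaving $\ch$ unchanged, as in Propositions \ref{PRP.naturality.of.twist.chern.char.the.same} and \ref{PRP.naturality.of.twists}, and the $\exp(-\xi)$ wedge formula of Proposition \ref{PRP.change.of.ch.under.change.of.gerbeconnection2forms}) applied to the path connection $\wtl\Ga$ on $p^*E$ over $X\times I$, followed by integration along the fiber. Your explicit check that $\phi^*(\Ga_i+\Pi_i\cdot\tbf{1})=\phi^*\Ga_i+\Pi_i\cdot\tbf{1}$ and $\phi^*\Ga_\xi=(\phi^*\Ga)_\xi$ is exactly the bookkeeping step the paper leaves implicit.
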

\begin{proof} (1) $\Ch(E,\phi,\Ga')=\cs(t\mapsto (1-t)\Ga'+t\phi^*\Ga')=\cs(t\mapsto (1-t)\Ga+t\phi^*\Ga+\Pi\cdot\tbf{1})=\cs(t\mapsto (1-t)\Ga+t\phi^*\Ga)=\Ch(E,\phi,\Ga)$, where the third equality follows from a similar calculation appearing in the proof of Proposition \ref{PRP.naturality.of.twists}.

(2) Let $\wtl\Ga$ be a connection on $p^*E$ defined by pullback of connections on the path $(1-t)\Ga^E+t\phi^*\Ga^E$. We have $\Ch(E,\phi,\Ga^E_\xi)=\cs(t\mapsto (1-t)\Ga_\xi^E+t\phi^*\Ga_\xi^E)=\int_I\ch(\wtl\Ga)\we\exp(-p^*\xi)=(\int_I\ch(\wtl\Ga))\we\exp(-\xi)=\Ch(E,\phi,\Ga^E)\we\exp(-\xi)$.
\end{proof}

\subsection{The $a$ map and the exact sequence involving the $a$ and $I$ maps}\label{SEC.a.map.and.exact.seq}

\begin{nta}\label{NTA.AllChForms.and.AllExactForms}  We denote by $\Om_{H,\Ch}$ the abelian group $\trm{Im}(\Ch)+\trm{Im}(d+H)$, where $\trm{Im}(\Ch)$ is the abelian group generated by all odd twisted Chern character forms.
\end{nta}

%We refer the reader to Appendix \ref{APP.OddTwistedChernChar} for the definition and properties of the odd twisted Chern character. In particular, for a fixed differential twist $\widehat\la\in \texttt{Twist}_{\widehat K}^{\trm{tor}}(\cU)$ and a triple $(E,\phi,\Ga^E)$ consisting of a $\la$-twisted vector bundle, an automorphism $\phi$ of $E$, and a connection $\Ga^E$ on $E$ associated with $\widehat\la$, there is an \emph{odd twisted Chern character form}  $\Ch(E,\phi,\Ga^E):=\cs(\ga)$ where $\ga(t):=(1-t)\Ga^E+t\phi^*\Ga^E$. We denote by $\trm{Im}(\Ch)$ the abelian group generated by all odd twisted Chern character forms, and also denote by $\Om_{H,\Ch}$ the abelian group $\trm{Im}(\Ch)+\trm{Im}(d+H)$.

\begin{dfn}\label{DEF.the.a.circle.map} Let $\widehat\la\in \texttt{Twist}_{\widehat K}^{\trm{tor}}(\cU)$. Define:\beqs
a:\Om^{\trm{odd}}(X;\C)/\Om_{H,\Ch} &\ra \widehat K^0(\cU,\widehat\la)\\
\te &\mapsto [(\mathscr{O},0,\te)].
\eeqs
\end{dfn}

\begin{lem}\label{LEM.a.circ.well.def} The map $a$ is well-defined, a group homomorphism and is natural in $\cU$.
\end{lem}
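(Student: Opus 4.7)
The plan has three parts: well-definedness, homomorphism property, and naturality. The key observation throughout is that Definition \ref{DEF.oddtwistedchernchar} gives
\[
\Ch(E,\phi,\Ga)=\cs\!\left(t\mapsto (1-t)\Ga+t\phi^*\Ga\right)=\CS(\Ga,\phi^*\Ga)\pmod{\trm{Im}(d+H)},
\]
so that odd Chern character forms are realized directly as twisted Chern--Simons transgressions between a connection and its pullback by an automorphism.

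First I would establish well-definedness. Suppose $\te,\te'\in \Om^{\trm{odd}}(X;\C)$ differ by an element of $\Om_{H,\Ch}$, i.e.\ $\te-\te'=\Psi+(d+H)\beta$, where $\Psi$ is a finite integral combination of odd twisted Chern character forms. Exactness on the right is harmless because the third slot of a $\widehat K^0(\cU,\widehat\la)$-generator is already taken modulo $\trm{Im}(d+H)$. It suffices therefore to treat the case $\te-\te'=\Ch(E,\phi,\Ga)$ for a single automorphism $\phi$ of a $\la$-twisted vector bundle $E$ with $\widehat\la$-compatible connection $\Ga$; sums and differences follow by taking direct sums $(E_1\dsum E_2,\phi_1\dsum\phi_2,\Ga_1\dsum\Ga_2)$ and by replacing $\phi$ with $\phi^{-1}$ (here one invokes Propositions \ref{PRP.cs.for.two.different.paths.is.differ.by.exact} and \ref{PRP.cs.pullback.by.bundleisom} to see that $\Ch(E,\phi^{-1},\Ga)\equiv -\Ch(E,\phi,\Ga)$ modulo $\trm{Im}(d+H)$). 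For the single-generator case I would exhibit an explicit equivalence of triples in the sense of Definition \ref{DEF.triplerel}: take the auxiliary bundle with connection to be $(E,\Ga)$ and the isomorphism $\tbf{1}_{\mathscr{O}}\dsum\phi:\mathscr{O}\dsum E\ra \mathscr{O}\dsum E$. Since $\mathscr{O}$ has rank zero, $0\dsum\Ga=\Ga$, and the displayed identity yields
\[
\CS\!\left(0\dsum\Ga,\;(\tbf{1}_{\mathscr{O}}\dsum\phi)^*(0\dsum\Ga)\right)=\CS(\Ga,\phi^*\Ga)=\Ch(E,\phi,\Ga)=\te-\te',
\]
which is exactly the relation required for $(\mathscr{O},0,\te)\sim(\mathscr{O},0,\te')$.

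The homomorphism property is then immediate from the definition of addition: $a(\te)+a(\te')=[(\mathscr{O}\dsum\mathscr{O},0\dsum 0,\te+\te')]=[(\mathscr{O},0,\te+\te')]=a(\te+\te')$, using that $\mathscr{O}\dsum\mathscr{O}$ is canonically isomorphic to $\mathscr{O}$ with zero connection. For naturality with respect to $f:(Y,\cV)\ra(X,\cU)$ satisfying $\cV=f^{-1}\cU$, it suffices to observe that $f^*\mathscr{O}$ is the zero $(\la\circ f)$-twisted bundle over $Y$, $f^*0=0$, and $f^*\te$ is well-defined on the quotient $\Om^{\trm{odd}}(Y;\C)/\Om_{f^*H,\Ch}$ (using functoriality of $\Ch$ established above and the chain-rule identity $f^*(d+H)=(d+f^*H)f^*$). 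Thus $f^*\circ a=a\circ f^*$.

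I expect the only subtle point to be reducing a general element of $\trm{Im}(\Ch)$ to the single-automorphism case, i.e.\ verifying that additivity under direct sums and sign-reversal under inverses is compatible with the equivalence relation in Definition \ref{DEF.triplerel}; this is where Proposition \ref{PRP.ch.is.additive} (applied to the pulled-back connection on $X\times I$) and the sign formula for $\cs$ upon path-reversal will be needed.
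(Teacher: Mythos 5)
Your proposal is correct and follows essentially the same route as the paper: the heart of both arguments is that $\Ch(E,\phi,\Ga)=\CS(\Ga,\phi^*\Ga)$ modulo $\trm{Im}(d+H)$, so adding an odd Chern character form to the third slot is absorbed by an equivalence of triples with stabilizing bundle $(E,\Ga)$ and the isomorphism given by $\phi$ (the paper phrases this by adding and subtracting $[(E,\Ga^E,0)]$ in the group completion, which is the same mechanism as your direct equivalence $(\mathscr{O},0,\te)\sim(\mathscr{O},0,\te')$). Your extra remarks on reducing a general element of $\trm{Im}(\Ch)$ to a single generator via direct sums and $\phi^{-1}$, and on the homomorphism and naturality statements, are correct fillings-in of details the paper leaves implicit.
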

\begin{proof}
We prove well-definedness. \beqs a(\te+\Ch(E,\phi,\Ga^E))&=[(\mathscr{O},0,\te+\Ch(E,\phi,\Ga^E))]\\
=&[(\mathscr{O}\dsum E,0\dsum \Ga^E,\te+\cs((1-t)\Ga^E+t\phi^*\Ga^E))]-[(E,\Ga^E,0)]\\
=&[(\mathscr{O},0,\te)]+[(E,\Ga^E,\cs((1-t)\Ga^E+t\phi^*\Ga^E))]-[(E,\Ga^E,0)]\\
=&[(\mathscr{O},0,\te)]=a(\te).\eeqs
\end{proof}

\begin{prp}The following sequence is exact:
$$0\ra \Om^{\trm{odd}}(X;\C)/\Om_{H,\Ch} \srl{a}\ra \widehat K^0(\cU,\widehat\la)\srl{I}\ra K^0(\cU,\la)\ra 0.$$
\end{prp}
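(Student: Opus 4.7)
The plan is to verify exactness at each of the three positions in sequence, working from the easiest to the most delicate.

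First I would dispatch surjectivity of $I$. Given $[E]-[F]\in K^0(\cU,\la)$, the remark after Definition \ref{DEF.twisted.connection} guarantees the existence of $\widehat\la$-compatible connections $\Ga^E$ and $\Ga^F$ on $E$ and $F$; then $[(E,\Ga^E,0)]-[(F,\Ga^F,0)]$ maps to $[E]-[F]$ under $I$.

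Next I would handle exactness at $\widehat K^0(\cU,\widehat\la)$. The inclusion $\trm{Im}(a)\subseteq\Ker(I)$ is immediate, since $I\circ a$ sends $\te$ to $[\mathscr{O}]=0$. For the reverse, suppose $[(E,\Ga^E,\om)]-[(F,\Ga^F,\eta)]$ lies in $\Ker(I)$, so $[E]=[F]$ in $K^0(\cU,\la)$. Then there is a $\la$-twisted vector bundle $G$ and an isomorphism $\vph:E\dsum G\srl{\isom}\ra F\dsum G$; pick any $\widehat\la$-compatible connection $\Ga^G$. Setting $\te_0:=\CS(\Ga^E\dsum\Ga^G,\vph^*(\Ga^F\dsum\Ga^G))$, Definition \ref{DEF.triplerel} shows that $(E,\Ga^E,\om)$ is equivalent to $(F,\Ga^F,\om-\te_0)$, so in the group $[(E,\Ga^E,\om)]=[(F,\Ga^F,\om-\te_0)]$. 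Using that $F\dsum\mathscr{O}\isom F$ together with the addition rule, one checks the identity $[(F,\Ga^F,\mu)]-[(F,\Ga^F,\nu)]=a(\mu-\nu)$, and the class in question becomes $a(\om-\te_0-\eta)\in\trm{Im}(a)$.

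The main obstacle is injectivity of $a$, which is the only step that really uses the definition of $\Om_{H,\Ch}$ in Notation \ref{NTA.AllChForms.and.AllExactForms}. Suppose $a(\te)=0$. Unpacking the group completion of the monoid $\f{G}$, there exists a $\widehat K^0(\cU,\widehat\la)$-generator $(G,\Ga^G,\nu)$ such that $(\mathscr{O}\dsum G,0\dsum\Ga^G,\te+\nu)\sim(G,\Ga^G,\nu)$ as classes in $\f G$. By Definition \ref{DEF.triplerel}, there is a further $(F,\Ga^F)$ and an isomorphism $\vph:G\dsum F\srl{\isom}\ra G\dsum F$ (after absorbing $\mathscr O\dsum G\isom G$) such that
$$\CS(\Ga^G\dsum\Ga^F,\vph^*(\Ga^G\dsum\Ga^F))=(\te+\nu)-\nu=\te.$$
Since $\vph$ is a $\la$-twisted automorphism of $G\dsum F$, computing $\CS$ along the straight-line path joining $\Ga^G\dsum\Ga^F$ to $\vph^*(\Ga^G\dsum\Ga^F)$ gives precisely the total twisted odd Chern character form $\Ch(G\dsum F,\vph,\Ga^G\dsum\Ga^F)$ of Definition \ref{DEF.oddtwistedchernchar}, modulo $\trm{Im}(d+H)$. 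Therefore $\te\in\trm{Im}(\Ch)+\trm{Im}(d+H)=\Om_{H,\Ch}$, i.e. $\te=0$ in the quotient, which is injectivity. The delicate point here is recognizing the relation coming from the equivalence of triples under an \emph{automorphism} as exactly the mechanism generating $\Om_{H,\Ch}$, so that the quotient in the domain of $a$ is precisely tight enough to make $a$ injective.
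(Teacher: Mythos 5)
Your proof is correct and follows essentially the same route as the paper: surjectivity of $I$ by choosing compatible connections, the add-and-subtract-$(\mathscr{O},0,\cdot)$ trick to show $\Ker(I)\subseteq\trm{Im}(a)$, and injectivity of $a$ by unpacking the group completion to exhibit $\te$ as a twisted odd Chern character form $\CS(\Ga,\vph^*\Ga)=\Ch(E,\vph,\Ga)$ modulo $\trm{Im}(d+H)$, hence an element of $\Om_{H,\Ch}$. Your write-up is in fact slightly more explicit than the paper's (which compresses the group-completion step into one sentence), but the underlying argument is identical.
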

\begin{proof} It is obvious that $I$ is surjective, and $\trm{Im }a\subseteq\ker I$. We show the other inclusion.

Let $[(E,\Ga^E,\om)]-[(F,\Ga^F,\eta)]\in \widehat K^0(\cU,\widehat\la)$ and suppose $I\big([(E,\Ga^E,\om)]-[(F,\Ga^F,\eta)]\big)=0$. Then there exists a $\la$-twisted vector bundle $G$ and an isomorphism of $\la$-twisted vector bundles $\vph:E\dsum G\ra F\dsum G$. Choose any connection $\Ga^G$ on $G$ that is compatible with $\widehat\la$. Then \beqs\ptm
[(E,\Ga^E,\om)]-[(F,\Ga^F,\eta)]&=[(E\dsum G,\Ga^E\dsum \Ga^G,\om)]-[(F\dsum G,\Ga^F\dsum\Ga^G,\eta)]\\
&\srl{*}=[(E\dsum G,\Ga^E\dsum \Ga^G,\om)]-[(E\dsum G,\Ga^E\dsum\Ga^G,\mu)],
\eeqs where $\mu:=\eta+\CS(\Ga^E\dsum\Ga^G,\vph^*(\Ga^F\dsum\Ga^G))$, and $*$ follows from the fact that $(E\dsum G,\Ga^E\dsum\Ga^G,\mu)$ is equivalent to $(F\dsum G,\Ga^F\dsum\Ga^G,\eta)$. We now add and subtract $(\mathscr{O},0,0)$, and get:
\beqs\ptm \qquad&[(E\dsum G,\Ga^E\dsum \Ga^G,\om)]-[(E\dsum G,\Ga^E\dsum\Ga^G,\mu)]\\&=[(E\dsum G\dsum \mathscr{O},\Ga^E\dsum \Ga^G\dsum 0,\om)]-[(E\dsum G\dsum \mathscr{O},\Ga^E\dsum\Ga^G\dsum 0,\mu)]\\
&=[(E\dsum G,\Ga^E\dsum \Ga^G,0)]+[(\mathscr{O},0,\om)]-[(E\dsum G,\Ga^E\dsum\Ga^G,0)]-[(\mathscr{O},0,\mu)]\\&=[(\mathscr{O},0,\om-\mu)]=a(\om-\mu).
\eeqs
We now show injectivity of the $a$ map. Suppose $a(\te)=0$. Equivalently, there exists a $\la$-twisted vector bundle with connection $(E,\Ga^E)$ whose connection is compatible with $\widehat\la$ and an automorphism $\vph\in \textrm{Aut}(E)$ satisfying that $\te=\CS(\Ga^E,\vph^*\Ga^E)=\Ch(E,\vph,\Ga^E)\mod \trm{Im}(d+H)$  Hence the result.
\end{proof}

\begin{nta}
Let $\cU:=\{U_i\}_{i\in \La}$ be an open cover of a space $X$. Recall that a \emph{refinement} of $\cU$ is a pair $(\cV,\tau)$ consisting of an open cover $\cV:=\{V_r\}_{r\in \cL}$ of $X$ and a map $\tau:\cL\ra \La$ such that $V_r\subset U_{\tau(r)}$ for all $r\in \cL$. It induces the following restriction maps on the totality of $U(1)$-gerbes with connection, $\la$-twisted vector bundles, and the space of connections on a $\la$-twisted vector bundle $E$, respectively.
\beqs
\widehat\la=(\la_{kji},A_{ji},B_i) &\mapsto \tau^*\widehat\la=(\tau^*\la_{tsr},\tau^*A_{sr},\tau^*B_r):=(\la_{\tau(t)\tau(s)\tau(r)},A_{\tau(s)\tau(r)},B_{\tau(r)})\\
E=(\cU,\{g_{ji}\},\{\la_{kji}\}) &\mapsto \tau^*E:=(\cV,\{g_{\tau(s)\tau(r)}\},\{\tau^*\la_{tsr}\})\\
\Ga=\{\Ga_i\}_{i\in\La}&\mapsto\tau^*\Ga=\{(\tau^*\Ga)_r:=\Ga_{\tau(r)}\}_{r\in\cL}
\eeqs
\end{nta}

\begin{cor} Let $\cU:=\{U_i\}_{i\in \La}$ be a good cover of $X$ and $(\cV,\tau)$ consisting of an open cover $\cV:=\{V_r\}_{r\in \cL}$ of $X$ and a map $\tau:\cL\ra \La$ be a choice of refinement of $\cU$. The restriction map induced by $\tau$ \beqs
\widehat R_\tau:\widehat K^0(\cU;\widehat\la)&\ra\widehat K^0(\cV;\tau^*\widehat\la)\\
[(E,\Ga^E,\om)]-[(F,\Ga^F,\eta)]&\mapsto [(\tau^*F,\tau^*\Ga^E,\om)]-[(\tau^*F,\tau^*\Ga^F,\eta)]
\eeqs is an isomorphism of abelian groups.
\end{cor}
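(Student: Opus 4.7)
The plan is to exhibit $\widehat R_\tau$ as the middle arrow in a morphism between two short exact sequences (namely, those built from the $a$ and $I$ maps in the preceding subsection) and then apply the Five Lemma.

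First I would verify that $\widehat R_\tau$ is a well-defined group homomorphism, mirroring the proof of functoriality in Lemma \ref{LEM.functoriality} and Proposition \ref{LEM.functoriality.of.tw.diff.K}. Concretely, if $(E,\Ga,\om)\sim(E',\Ga',\om')$ is witnessed by a $\la$-twisted bundle with connection $(G,\Ga^G)$ and an isomorphism $\vph:E\dsum G\ra E'\dsum G$, then $(\tau^*G,\tau^*\Ga^G)$ is a $\tau^*\la$-twisted bundle with $\tau^*\widehat\la$-compatible connection, $\tau^*\vph$ is an isomorphism of $\tau^*\la$-twisted bundles, and
$$\CS(\tau^*\Ga\dsum\tau^*\Ga^G,\tau^*\vph^*(\tau^*\Ga'\dsum\tau^*\Ga^G))=\CS(\Ga\dsum\Ga^G,\vph^*(\Ga'\dsum\Ga^G))$$
as global odd forms on $X$, because $\tau^*\Ga_r=\Ga_{\tau(r)}$ and the local integrand $\tr(\wtl R-B\cdot\tbf{1})^m$ agrees on every $V_r\subset U_{\tau(r)}$ and hence pushes down to the same global form under integration along the fiber.

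Second I would consider the commutative diagram
\[
\xymatrix{
0 \ar[r] & \Om^{\odd}(X;\C)/\Om_{H,\Ch} \ar[r]^-{a} \ar[d]^{\cong} & \widehat K^0(\cU,\widehat\la) \ar[r]^-{I} \ar[d]^{\widehat R_\tau} & K^0(\cU,\la) \ar[r] \ar[d]^{R_\tau} & 0 \\
0 \ar[r] & \Om^{\odd}(X;\C)/\Om_{H,\Ch} \ar[r]^-{a} & \widehat K^0(\cV,\tau^*\widehat\la) \ar[r]^-{I} & K^0(\cV,\tau^*\la) \ar[r] & 0
}
\]
whose rows are the short exact sequences. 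Commutativity of both squares is immediate from the defining formulas of $a$, $I$, and $\widehat R_\tau$. The left vertical map is the identity: the space $X$, its odd forms, and the curvature $H=\trm{Curv}(\widehat\la)$ are intrinsic to $(X,\widehat\la)$ and do not depend on $\cU$, while $\Om_{H,\Ch}$ also agrees on both sides because any odd Chern character form $\Ch(E,\phi,\Ga)$ restricted to $\cV$ produces the same global form on $X$ (and vice versa after we establish the right vertical map is surjective).

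Third, I would show that the right vertical map $R_\tau:K^0(\cU,\la)\to K^0(\cV,\tau^*\la)$ is an isomorphism by constructing an inverse. Given a $\tau^*\la$-twisted bundle on $\cV$, the good-cover hypothesis on $\cU$ (contractibility of all finite intersections $U_{i_1\cdots i_n}$) allows one to extend the $\cV$-indexed transition data to transition maps on $\cU$ satisfying the completely normalized $\la$-twisted cocycle identity, using partitions of unity together with the usual gluing and averaging argument. A parallel argument handles morphisms, so that the restriction functor $\Bun(\cU,\la)\to\Bun(\cV,\tau^*\la)$ is an equivalence of additive categories and hence induces an isomorphism on Grothendieck groups. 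Finally, the Five Lemma applied to the diagram above forces $\widehat R_\tau$ to be an isomorphism.

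The main obstacle is Step 3: the gluing step must reproduce a cocycle satisfying the \emph{exact} completely normalized $\la$-twisted cocycle identity (with the prescribed $\la_{kji}$ rather than one merely cohomologous to it) and must be done so as to respect direct sums and bundle isomorphisms up to canonical identifications. Everything else is bookkeeping on the level of local forms, which carries through transparently once $\tau^*\Ga_r=\Ga_{\tau(r)}$ is unwound.
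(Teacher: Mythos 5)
Your overall strategy is exactly the paper's: both proofs place $\widehat R_\tau$ as the middle arrow between the two short exact sequences $0\ra \Om^{\odd}(X;\C)/\Om_{H,\Ch}\srl{a}\ra \widehat K^0\srl{I}\ra K^0\ra 0$ for $(\cU,\widehat\la)$ and $(\cV,\tau^*\widehat\la)$, check commutativity of the squares, and invoke the five lemma. Your bookkeeping in Steps 1 and 2 (that $\tau^*\Ga_r=\Ga_{\tau(r)}$ makes the local Chern--Weil integrands agree on $V_r\subset U_{\tau(r)}$, so $\CS$ forms, the curvature $H$, and $\Om_{H,\Ch}$ push down to the same global data) is fine and is in the spirit of what the paper leaves implicit.

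The genuine gap is your Step 3, and you have correctly flagged it yourself. The paper does not prove that $R_\tau:K^0(\cU,\la)\ra K^0(\cV,\tau^*\la)$ is an isomorphism; it cites Karoubi (\emph{Twisted bundles and twisted $K$-theory}, Theorem 3.6, p.~233), and that is the essential nontrivial input of the corollary. Your proposed replacement --- extend $\cV$-indexed transition data to $\cU$ ``using partitions of unity together with the usual gluing and averaging argument'' --- does not work as stated: the transition functions are $U(n)$-valued, and convex combinations or partition-of-unity averages of unitary matrices are not unitary, so there is no straightforward averaging; moreover, one must produce a cocycle satisfying the twisted identity $g_{kj}g_{ji}=g_{ki}\la_{kji}$ with the \emph{given} $\la$ on $\cU$, not merely a cohomologous one, and one must show the resulting functor is a quasi-inverse (essential surjectivity and fullness/faithfulness of restriction), which is precisely the content of Karoubi's theorem rather than a routine gluing exercise. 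Either cite that theorem, as the paper does, or supply an actual descent argument (e.g.\ via the classification of twisted bundles by projective-unitary or module data) --- the sketch as written would not survive scrutiny at exactly the point you identify as ``the main obstacle.''
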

\begin{proof} Consider the following diagram.
\[\xymatrix{
0 \ar[r]\ar@{=}[d]& \Om^{\text{odd}}(X;\C)/\Om_{H,\Ch} \ar@{=}[d]\ar[r]^{\qquad a}& \widehat{K}^0(\cU,\widehat{\la}) \ar[d]^{\widehat R_\tau}\ar[r]^{I}&   K^0(\cU;\la) \ar[d]_{\isom}^{R_\tau}\ar[r]& 0\ar@{=}[d]\\
0 \ar[r]& \Om^{\text{odd}}(X;\C)/\Om_{H,\Ch} \ar[r]^{\qquad a}& \widehat{K}^0(\cV,\tau^*\widehat{\la}) \ar[r]^{I}  & K^0(\cV;\tau^*\la) \ar[r]& 0
}
\] The map $R_\tau$ is an isomorphism (See Karoubi \cite[p.233]{Ka2} Theorem 3.6), and all square diagrams commute. Hence $\widehat R_\tau$ is an isomorphism by the five-lemma.
\end{proof}

\begin{dfn} Let $\underline{\widehat\la}$ be the element in the colimit of Deligne $2$-cocycles over $X$ defined on a good open cover $\cU$ along a choice of refinements of $\cU$ represented by $\widehat\la\in\texttt{Twist}_{\widehat K}^{\trm{tor}}(\cU)$.  The \textbf{twisted differential $K$-group} of $X$, denoted by $\widehat K^0(X,\underline{\widehat\la})$, is defined by the colimit of $\widehat K^0(\cU,\widehat\la)$ over all refinements of $\cU$. The \textbf{twisted $K$-group} of $X$, denoted by $K^0(X,\underline{\la})$, is defined by the colimit of $K^0(\cU,\la)$ over all refinements of $\cU$.
\end{dfn}

\subsection{The hexagon diagram}\label{SEC.hexagon.diagram}
\begin{nta}\label{NTA.hexagon.subsection} We denote by $\trm{Pr}:\Om^{\trm{even}}(X;\C)_{\trm{closed}}\ra H_H^{\trm{even}}(X;\C)$ the map taking twisted de Rham cohomology class, and $r:H_H^{\trm{odd}}(X;\C)\ra\Om^{\trm{odd}}(X;\C)/\Om_{H,\Ch}$ the map that sends an odd twisted de Rham cohomology class $[\om]$ to $\om+\Om_{H,\Ch}$. The map $r$ is well-defined by definition of $\om+\Om_{H,\Ch}$ (see Notation \ref{NTA.AllChForms.and.AllExactForms}).
\end{nta}

\begin{prp}\label{PRP.hexagon.1} For the maps $I$, $R$, and $a$ from or into $\widehat K^0(\cU;\widehat\la)$, the following holds:\begin{itemize}
    \item[(1)] $\ch\circ I=\trm{Pr}\circ R$.
    \item[(2)] $R\circ a=d+H$.
\end{itemize}
\end{prp}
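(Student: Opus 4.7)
My plan is to unpack the definitions of $I$, $R$, and $a$ and invoke two facts already established: the total twisted Chern character form $\ch(\Ga)$ is $(d+H)$-closed, and the twisted Chern-Weil transgression formula from Proposition \ref{PRP.twisted.Chern-Weil}, which reads $\ch(\Ga_0)-\ch(\Ga_1)=(d+H)\cs(\ga)$.

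For part (1), I would fix a generator $[(E,\Ga^E,\om)]-[(F,\Ga^F,\eta)]\in\widehat K^0(\cU;\widehat\la)$. Along the upper route,
\[\ch\circ I\bigl([(E,\Ga^E,\om)]-[(F,\Ga^F,\eta)]\bigr)=\ch([E]-[F])=[\ch(\Ga^E)]-[\ch(\Ga^F)]\in H_H^{\trm{even}}(X;\C).\]
Along the lower route, $R$ produces $\ch(\Ga^E)+(d+H)\om-\ch(\Ga^F)-(d+H)\eta\in\Om^{\trm{even}}(X;\C)$. Because each $\ch(\Ga)$ is $(d+H)$-closed and $(d+H)^2=0$, this form is itself $(d+H)$-closed, so $\trm{Pr}$ applies; it kills the exact summands $(d+H)\om$ and $(d+H)\eta$ and leaves exactly $[\ch(\Ga^E)]-[\ch(\Ga^F)]$. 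The two sides therefore agree.

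For part (2), a direct computation on a representative gives
\[R\circ a(\te)=R\bigl([(\mathscr{O},0,\te)]\bigr)=\ch(0)+(d+H)\te=(d+H)\te,\]
since $\mathscr{O}$ has rank zero and consequently $\ch(0)=0$. What requires verification is that $\te\mapsto(d+H)\te$ descends to a well-defined map out of the quotient $\Om^{\trm{odd}}(X;\C)/\Om_{H,\Ch}$, i.e., that $(d+H)$ annihilates $\Om_{H,\Ch}$. For representatives in $\trm{Im}(d+H)$ this follows from $(d+H)^2=0$; for representatives of the form $\Ch(E,\phi,\Ga^E)=\cs\bigl(t\mapsto(1-t)\Ga^E+t\phi^*\Ga^E\bigr)$, Proposition \ref{PRP.twisted.Chern-Weil} combined with Proposition \ref{PRP.naturality.ch.bundleisom} yields
\[(d+H)\Ch(E,\phi,\Ga^E)=\ch(\Ga^E)-\ch(\phi^*\Ga^E)=0.\]

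Neither part presents a genuine obstacle; the proof is essentially bookkeeping with the definitions of $I$, $R$, $a$ together with the two closedness facts above. The only points meriting attention are the verification that the image of $R$ is closed (needed to apply $\trm{Pr}$ in (1)) and the descent of $d+H$ to the quotient $\Om^{\trm{odd}}(X;\C)/\Om_{H,\Ch}$ in (2); both are one-line consequences of $(d+H)\ch(\Ga)=0$ and its transgression version.
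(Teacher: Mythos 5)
Your verification is correct and is exactly the routine unpacking intended here—the paper states this proposition without giving a proof, since both identities follow immediately from the definitions of $I$, $R$, $a$ together with $(d+H)$-closedness of $\ch(\Ga)$. The two points you single out—that the image of $R$ consists of $(d+H)$-closed forms so that $\trm{Pr}$ applies, and that $(d+H)$ annihilates $\Om_{H,\Ch}$ via Propositions \ref{PRP.twisted.Chern-Weil} and \ref{PRP.naturality.ch.bundleisom}—are precisely the observations the paper itself records (the latter just after Definition \ref{DEF.oddtwistedchernchar}), so nothing is missing.
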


\begin{dfn} We define maps $\al$ and $\be$ as follows:\beqs &\al: H_H^{\odd}(X;\C)\ra \ker R&&\be:\ker R\ra K^{0}(\cU,\la)\\
&[\om]\mapsto (\mathscr{O},0,\om)\qquad&& (E,\Ga^E,\om)-(F,\Ga^F,\eta)\mapsto [E]-[F].
\eeqs
\end{dfn}
\begin{rmk} The maps $\al$ and $\be$ are well-defined group homomorphisms.
\end{rmk}

\begin{prp}
\begin{itemize}
    \item[(1)] $a \circ r=\trm{incl}\circ \al$.
    \item[(2)] $\be =I\circ \trm{incl}$.
    \item[(3)] The following sequences are exact:
\beqs
H_H^{\trm{odd}}(X;\C) \srl{\al}\ra &\ker R \srl{\be}\ra K^{0}(\cU,\la) \srl{\ch}\ra H_H^{\trm{even}}(X;\C)\\
H_H^{\trm{odd}}(X;\C) \srl{r}\ra&\Om^{\trm{odd}}(X;\C)/\trm{Im}(d+H) \srl{d+H}\ra \Om^{\trm{even}}(X;\C)_{\trm{closed}} \srl{\trm{Pr}}\ra H_H^{\trm{even}}(X;\C)
\eeqs
\end{itemize}
\end{prp}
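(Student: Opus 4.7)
The plan is to verify each statement by direct unpacking of definitions, invoking the formulas for $\alpha$, $\beta$, $I$, $R$, and $a$, and reserving the stabilization argument from the proof of the $a$--$I$ exact sequence for the one nontrivial step. For (1), I would compute both sides on a class $[\omega]$: the composition $a \circ r$ sends $[\omega]$ to $[(\mathscr{O}, 0, \omega)] \in \widehat{K}^0(\cU,\widehat\la)$, while $\textrm{incl} \circ \alpha$ sends $[\omega]$ to the same class, after noting that $\alpha([\omega]) = (\mathscr{O}, 0, \omega)$ lies in $\ker R$ because $R(\mathscr{O}, 0, \omega) = (d+H)\omega = 0$ by Proposition \ref{PRP.hexagon.1}(2) and closedness of $\omega$. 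Part (2) is immediate from the defining formulas of $\beta$ and $I$.

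For the de Rham sequence in (3), the plan is to verify exactness spot-by-spot. At $\Omega^{\textrm{even}}(X;\C)_{\textrm{closed}}$ this is just the definition of twisted de Rham cohomology: $\ker\textrm{Pr} = \textrm{Im}(d+H)$. At $\Omega^{\textrm{odd}}(X;\C)/\textrm{Im}(d+H)$, the kernel of $d+H$ consists of classes represented by $(d+H)$-closed forms, which are exactly the classes in the image of $r$ (viewed here as the natural projection sending a cohomology class to its image in the larger quotient by $\textrm{Im}(d+H)$).

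For the first sequence in (3), at $\ker R$ I handle the two inclusions separately. The inclusion $\textrm{Im}(\alpha) \subseteq \ker \beta$ is immediate since $\beta(\mathscr{O}, 0, \omega) = [\mathscr{O}] = 0$ in $K^0(\cU, \la)$. For the converse, given $x = [(E, \Gamma^E, \omega)] - [(F, \Gamma^F, \eta)] \in \ker R$ with $\beta(x) = 0$, I replay the stabilization from the $a$--$I$ exact sequence proof: choose a twisted bundle $G$ and an isomorphism $\varphi : E \oplus G \to F \oplus G$ witnessing $[E] = [F]$ in $K^0$, then rewrite $x = [(\mathscr{O}, 0, \omega - \mu)]$ where $\mu = \eta + \CS(\Gamma^E \oplus \Gamma^G, \varphi^*(\Gamma^F \oplus \Gamma^G))$; the condition $x \in \ker R$ forces $(d+H)(\omega - \mu) = 0$, so $[\omega - \mu]$ defines a class in $H_H^{\textrm{odd}}(X;\C)$ satisfying $\alpha([\omega - \mu]) = x$. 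At $K^0(\cU, \la)$, for a generator in $\ker R$ the vanishing of $R$ yields $\textrm{ch}(\Gamma^E) - \textrm{ch}(\Gamma^F) = (d+H)(\eta - \omega)$, forcing $\textrm{ch}([E]-[F]) = 0$; conversely, given $[E] - [F] \in \ker \textrm{ch}$, pick $\sigma \in \Omega^{\textrm{odd}}(X;\C)$ with $\textrm{ch}(\Gamma^E) - \textrm{ch}(\Gamma^F) = (d+H)\sigma$, and then $(E, \Gamma^E, 0) - (F, \Gamma^F, \sigma)$ lies in $\ker R$ with $\beta$-image $[E] - [F]$.

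The main obstacle I anticipate is the reverse inclusion $\ker \beta \subseteq \textrm{Im}(\alpha)$ at $\ker R$: one must carefully replay the stabilization argument from the $a$--$I$ exact sequence proof while tracking the $R$-condition to ensure $\omega - \mu$ is genuinely $(d+H)$-closed. The key facts I will use are that $R$ is additive over direct sums and invariant under isomorphisms of twisted bundles, which follow from Proposition \ref{PRP.ch.is.additive} and Proposition \ref{PRP.naturality.ch.bundleisom}, together with Proposition \ref{PRP.cs.pullback.by.bundleisom} to handle the Chern--Simons term under the isomorphism $\varphi$. Every other point reduces to direct computation with $R$ and $\textrm{ch}$ or to standard facts about twisted de Rham cohomology.
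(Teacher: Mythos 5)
Your proposal is correct and follows essentially the same route as the paper: the only non-obvious point is $\ker\be\subseteq\trm{Im}(\al)$, and you handle it exactly as the paper does, replaying the stabilization from the $a$--$I$ sequence to write the class as $[(\mathscr{O},0,\om-\mu)]$ and using the $\ker R$ condition together with additivity and isomorphism-invariance of $\ch$ (plus the transgression identity $(d+H)\CS(\Ga_0,\Ga_1)=\ch(\Ga_0)-\ch(\Ga_1)$ of Proposition \ref{PRP.twisted.Chern-Weil}, which is the precise fact behind your appeal to the Chern--Simons term) to see that $\om-\mu$ is $(d+H)$-closed. The remaining verifications you spell out are the ones the paper dismisses as obvious, and they are fine.
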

\begin{proof} All claims are obvious except that $\ker(\be)\subseteq\trm{Im}(\al)$, which we prove presently. Take an arbitrary element $[(E,\Ga^E,\om)]-[(F,\Ga^F,\eta)]\in \ker R$ whose image under $\be$ is zero, i.e., there exists a $\la$-twisted vector bundle $G$ defined on $\cU$ and an isomorphism $\vph:E\dsum G\srl{\isom}{\ra}F\dsum G$. Choose any connection $\Ga^G$ on $G$ that is compatible with $\widehat\la$. At this point we introduce the following notation: $$\zeta:= \CS\left(\Ga^E\dsum \Ga^G,\vph^*(\Ga^F\dsum\Ga^G)\right).$$

We see that
\beq\label{EQN.UpperLES}\ptm
[(E,\Ga^E,\om)]-[(F,\Ga^F,\eta)]&=[(E,\Ga^E,\om)]-[(F,\Ga^F,\om-\zeta)]+[(\mathscr{O},0,\om-\zeta-\eta)]\\&=a([\om-\zeta-\eta]),\eeq where in the first equality we add and subtract $[(\mathscr{O},0,\om-\zeta-\eta)]$ and in the second use Definition \ref{DEF.triplerel}.

We have to verify that the differential form $\om-\eta-\zeta$ represents an odd degree twisted cohomology class. Since $[(E,\Ga^E,\om)]-[(F,\Ga^F,\eta)]\in \ker R$, we have $\ch(\Ga^E)-\ch(\Ga^F)+(d+H)(\om-\eta)=0$. Now
$(d+H)(\om-\eta-\zeta)=\ch(\Ga^E)-\ch(\Ga^F)-(\ch(\Ga^E\dsum \Ga^G)-\ch(\Ga^F\dsum\Ga^G))=0$.
\end{proof}

\begin{cor}\label{COR.hexagon.diagram} In the following diagram for $\widehat K^0(X;\underline{\widehat\la})$, all square and triangles are commutative and all sequences are exact.

\[
\xy0;/r.25pc/:
(-30,30)*+{0}="0TL";
(-30,0)*+{H^{\mathrm{odd}}_{H}(X;\mathbb{C})}="L";
(-15,15)*+{\mathrm{ker}(R)}="TL";
(-30,-30)*+{0}="0BL";
(-15,-15)*+{\Omega^{\mathrm{odd}}(X)/\Omega_{H,\mathrm{Ch}}}="BL";
(0,0)*+{\widehat K^{0}(X;\underline{\widehat\la})}="M";
(15,15)*+{K^{0}(X,\underline\lambda)}="TR";
(30,30)*+{0}="0TR";
(15,-15)*+{\mathrm{Im}(R)}="BR";
(30,0)*+{H^{\mathrm{even}}_{H}(X;\mathbb{C})}="R";
(30,-30)*+{0}="0BR";
(-15,0)*{\scriptstyle\circlearrowleft};
(0,9)*{\scriptstyle\circlearrowleft};
(0,-8)*{\scriptstyle\circlearrowright};
(15,0)*{\scriptstyle\circlearrowleft};
{\ar"0TL";"TL"};
{\ar"L";"TL"^{\alpha}};
{\ar"TL";"TR"^{\beta}};
{\ar@{}"TL";"M"^(0.2){}="a"^(0.8){}="b"};
{\ar@{^{(}->}"a";"b"};
{\ar"M";"TR"^{I}};
{\ar"TR";"0TR"};
{\ar"TR";"R"^{\mathrm{ch}}};
{\ar"L";"BL"^{r}};
{\ar"0BL";"BL"};
{\ar"BL";"M"^{a}};
{\ar"BL";"BR"^(0.6){d+H}};
{\ar@{->>}"BR";"R"};
{\ar"BR";"0BR"};
{\ar"M";"BR"^{R}};
\endxy
\]
\end{cor}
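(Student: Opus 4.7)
The plan is to assemble the hexagon from the preceding propositions and then pass to the filtered colimit along refinements of the open cover. All of the individual commutativities and exactnesses have already been established at the level of a fixed good open cover $\cU$; what remains is essentially bookkeeping and a colimit argument.

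First I would verify each commutative cell of the hexagon at the level of $\widehat K^0(\cU, \widehat\la)$. The square $\ch \circ I = \trm{Pr} \circ R$ is Proposition \ref{PRP.hexagon.1}(1), the triangle $R \circ a = d+H$ is Proposition \ref{PRP.hexagon.1}(2), while the relations $a \circ r = \trm{incl} \circ \al$ and $\be = I \circ \trm{incl}$ are the content of the proposition immediately preceding the corollary. The remaining cells---involving the inclusion $\ker R \hookrightarrow \widehat K^0(\cU,\widehat\la)$ and the quotient $\widehat K^0(\cU,\widehat\la) \twoheadrightarrow \trm{Im}(R)$---commute tautologically.

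Next I would assemble the exactness. Exactness of the top horizontal sequence $H_H^{\trm{odd}}(X;\C) \srl{\al}\ra \ker R \srl{\be}\ra K^{0}(\cU,\la) \srl{\ch}\ra H_H^{\trm{even}}(X;\C)$ and of its bottom analogue is the content of the preceding proposition. Exactness at $\widehat K^0(\cU, \widehat\la)$ along the diagonal pair $(a, I)$ is the short exact sequence established in Section \ref{SEC.a.map.and.exact.seq}, and along the diagonal pair consisting of the inclusion of $\ker R$ and the map $R$ it holds by the very definition of $\ker R$ and $\trm{Im}(R)$.

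Finally I would pass to the colimit. All maps in the diagram are natural with respect to refinements---verified for $I$, $R$, $a$ in Sections \ref{SEC.I.and.R.maps}--\ref{SEC.a.map.and.exact.seq}, and automatic for $\al, \be, r, \trm{Pr}$ whose sources and targets are cover-independent---so each map in the hexagon for $\widehat K^0(X; \underline{\widehat\la})$ is the filtered colimit of its cover-level analogue. Since filtered colimits of abelian groups are exact and commute with the formation of kernels and images, both the commutativity of every cell and the exactness of every displayed sequence descend from the cover level to the colimit. The main piece of bookkeeping---and the only step that requires a moment of care---is identifying $\ker R$ and $\trm{Im}(R)$ in the colimit with the colimits of their cover-level counterparts, which is once more a consequence of exactness of filtered colimits.
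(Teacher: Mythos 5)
Your proposal is correct and matches the paper's (essentially implicit) argument: the corollary is exactly the assembly of Proposition \ref{PRP.hexagon.1}, the proposition on $\al$, $\be$, $r$ and the two exact rows, and the short exact sequence $0\ra \Om^{\trm{odd}}(X;\C)/\Om_{H,\Ch}\srl{a}\ra\widehat K^0\srl{I}\ra K^0\ra 0$, with the two diagonals through $\widehat K^0$ exact by that sequence and by the definition of $\ker R$ and $\trm{Im}(R)$. Passing to the colimit over refinements is immediate since the restriction maps $\widehat R_\tau$ are isomorphisms compatible with all the maps in the diagram, so your filtered-colimit bookkeeping (while slightly more than is needed) is a valid way to finish.
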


\begin{rmk} When the differential twist is $\widehat\la=(\{1\},\{0\},\{0\})$, the diagram reduces to the differential $K$-theory hexagon diagram of Simons and Sullivan (see \cite[p. 596]{SS}).
\end{rmk}

\subsection{Compatibility with change of twist map}\label{SEC.Compatibility.with.change.of.twist.map}

\begin{prp} Let $\widehat\al'=(\chi'_{ji},\Pi'_i)$ be an isomorphism $\widehat\la\ra\widehat\la'$ such that $\widehat\la'=\widehat\la+D\al'$. Then the diagram in Corollary \ref{COR.hexagon.diagram} is natural under change of twist by $\widehat\al'$:  \begin{itemize}
\item[(1)] $I_{\widehat\la'}\circ \phi_{\widehat\al'}=\phi_{\al'} \circ I_{\widehat\la}$.
\item[(2)] $\phi_{\widehat\al'}\circ a_{\widehat\la} =a_{\widehat\la'}$.
\item[(3)] $R_{\widehat\la'}\circ \phi_{\widehat\al'}=R_{\widehat\la}$.
\end{itemize}
\end{prp}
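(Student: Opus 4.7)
The plan is to verify each of the three identities by direct unpacking of definitions and invoking the invariance results already established in Sections \ref{SEC.3} and \ref{SEC.naturality.of.twist}. The key observation underlying all three is that since $\widehat\la' = \widehat\la + D\widehat\al'$, the two differential twists share the same $3$-curvature, i.e.\ $H' = H$ (as recorded in the remark preceding the proof of Proposition \ref{PRP.naturality.of.twist.chern.char.the.same}). Consequently the twisted de Rham complexes for $H$ and $H'$ coincide, and by Proposition \ref{PRP.naturality.of.twist.chern.char.the.same} together with its odd analogue (Proposition \ref{PRP.oddChernChar.naturality.of.twist}), the subgroup $\Om_{H,\Ch}$ is also unchanged under the change of twist. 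Hence the domains and codomains of the various maps in (1)--(3) are canonically identified, and the identities become equalities of concrete forms or classes rather than mere diagram commutations after identification.

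For (1), start with a generator $[(E,\Ga^E,\om)] - [(F,\Ga^F,\eta)] \in \widehat K^0(\cU,\widehat\la)$. Going one way, $\phi_{\widehat\al'}$ replaces the transition cocycle $\{g_{ji}\}$ of $E$ by $\{\chi'_{ji} g_{ji}\}$ and then $I_{\widehat\la'}$ discards the connection and form, leaving $[E'] - [F']$. Going the other way, $I_{\widehat\la}$ first discards the connection and form, yielding $[E] - [F] \in K^0(\cU,\la)$, and $\phi_{\al'}$ (as described in Remark \ref{RMK.canon.isom.two.different.twisting.K.gp} applied to cohomologous $U(1)$-gerbes) applies the identical modification of transition functions. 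The two underlying bundles then agree on the nose, proving (1).

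For (2), observe that the zero $\la$-twisted vector bundle $\mathscr{O}$ has rank $0$, so the prescription $(\{g_{ji}\}, \{\la_{kji}\}) \mapsto (\{\chi'_{ji} g_{ji}\}, \{\la'_{kji}\})$ produces the zero $\la'$-twisted vector bundle, and the modified connection $0 + \Pi' \cdot \tbf{1}$ is vacuously $0$. Moreover $\om' = \om$ by definition of $\phi_{\widehat\al'}$. Since $H = H'$ and $\Om_{H,\Ch} = \Om_{H',\Ch}$, the domain $\Om^{\odd}(X;\C)/\Om_{H,\Ch}$ of both $a_{\widehat\la}$ and $a_{\widehat\la'}$ is the same, and
\beqs
\phi_{\widehat\al'}\circ a_{\widehat\la}(\te) = \phi_{\widehat\al'}\big([(\mathscr{O},0,\te)]\big) = [(\mathscr{O},0,\te)] = a_{\widehat\la'}(\te).
\eeqs

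For (3), by definition,
\beqs
R_{\widehat\la'}\circ \phi_{\widehat\al'}\big([(E,\Ga^E,\om)]\big) = \ch(\Ga') + (d+H')\om,
\eeqs
where $\Ga'_i = \Ga^E_i + \Pi'_i \cdot \tbf{1}$ is the connection on $E'$ produced by the change-of-twist prescription. By Proposition \ref{PRP.naturality.of.twist.chern.char.the.same} we have the equality of forms $\ch(\Ga') = \ch(\Ga^E)$, and since $H' = H$, this equals $\ch(\Ga^E) + (d+H)\om = R_{\widehat\la}([(E,\Ga^E,\om)])$. The same argument applied to the $(F,\Ga^F,\eta)$ term, together with linearity, gives (3). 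There is no substantive obstacle here; the only task is the bookkeeping of verifying that the formulas defining $\phi_{\widehat\al'}$ on the differential side restrict correctly to $\phi_{\al'}$ on the topological side and that Proposition \ref{PRP.naturality.of.twist.chern.char.the.same} gives equality of differential forms (not merely cohomology classes), which it does by construction.
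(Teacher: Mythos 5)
Your proposal is correct and follows essentially the same route as the paper: (1) by unwinding the definitions of $\phi_{\widehat\al'}$ and $I$, (3) by invoking Proposition \ref{PRP.naturality.of.twist.chern.char.the.same} (which indeed gives equality of Chern character \emph{forms}) together with $H'=H$, while (2) the paper simply declares obvious and you spell out via the rank-zero bundle and the invariance of $\Om_{H,\Ch}$. No gaps; your added justification for (2) is consistent with Proposition \ref{PRP.oddChernChar.naturality.of.twist}.
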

\begin{proof} (1) \beqs I_{\widehat\la'}\circ \phi_{\widehat\al'}([(E,\Ga^E,\om)]-[(F,\Ga^F,\eta)])&=I_{\widehat\la'}([(\phi_{\al'}E,\Ga^E+\Pi'\cdot\tbf{1},\om)]-[(\phi_{\al'}F,\Ga^F+\Pi'\cdot\tbf{1},\eta)])\\&=[\phi_{\al'}E]-[\phi_{\al'}F]\\&=\phi_{\al'} \circ I_{\widehat\la}([(E,\Ga^E,\om)]-[(F,\Ga^F,\eta)]).\eeqs

(2) Obvious.

(3) \beqs
&R_{\widehat\la'}\circ \phi_{\al'}\big([(E,\Ga^E,\om)]-[(F,\Ga^F,\eta)]\big)=R_{\widehat\la}\big([(\phi_{\al'}E,\{\Ga^E_i+\Pi_i'\},\om)]-[(\phi_{\al'}F,\{\Ga^F_i+\Pi_i'\},\eta)]\big)\\
&\qquad=\big(\ch(\Ga^E+\Pi'\cdot\tbf{1})+(d+H)\om-\ch(\Ga^F+\Pi'\cdot\tbf{1})-(d+H)\eta\big)\\
&\qquad=\big(\ch(\Ga^E)+(d+H)\om-\ch(\Ga^F)-(d+H)\eta\big) \qquad\trm{by Lemma \ref{PRP.naturality.of.twist.chern.char.the.same}.}\\
&\qquad=R_{\widehat\la}\big([(E,\Ga^E,\om)]-[(F,\Ga^F,\eta)]\big).\eeqs
\end{proof}

\begin{prp} Let $\xi\in\Om^2(X;i\R)$ that induces an isomorphism $\widehat\la\ra \widehat\la_\xi$. Then the diagram in Corollary \ref{COR.hexagon.diagram} is natural under change of twist by $\xi$:
\begin{itemize}
    \item[(1)] $\Xi \circ a=a \circ \exp(-\xi)$.
    \item[(2)] $I\circ \Xi=I$.
    \item[(3)] $R \circ \Xi=\exp(-\xi) \circ R$.
\end{itemize}
\end{prp}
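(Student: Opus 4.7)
The proposition asserts three compatibilities between $\Xi\colon \widehat K^0(\cU;\widehat\la)\to\widehat K^0(\cU;\widehat\la_\xi)$ and the structure maps $a$, $I$, $R$. My plan is to verify each of the three identities by unpacking the definitions on representatives and invoking the previously established Chern-Weil-type computations; two of the three are essentially tautological, and only item (3) requires a genuine calculation.

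For (1), I would compute $\Xi\circ a(\te)=\Xi([(\mathscr{O},0,\te)])$. Since $\mathscr{O}$ is the zero $\la$-twisted vector bundle and its connection is the empty/zero family, the shifted connection $0_\xi$ in the sense of Notation \ref{NTA.change.of.curving.by.a.global.2.form} is still the zero family, so $\Xi\circ a(\te)=[(\mathscr{O},0,\te\we\exp(-\xi))]=a(\te\we\exp(-\xi))=a\circ\exp(-\xi)(\te)$. For (2), the map $\Xi$ leaves the underlying $\la$-twisted vector bundles $E,F$ unchanged (only the connection's compatibility datum and the odd form are modified), and $I$ depends only on $[E]-[F]$; thus $I\circ\Xi=I$ on generators and hence on the whole group.

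For (3), I would work on a generator $[(E,\Ga^E,\om)]-[(F,\Ga^F,\eta)]$ and apply $R_{\widehat\la_\xi}$ to its $\Xi$-image. The key inputs are Proposition \ref{PRP.change.of.ch.under.change.of.gerbeconnection2forms} — which, after substituting $\xi\leadsto -\xi$, gives $\ch(\Ga_\xi)=\ch(\Ga)\we\exp(-\xi)$ — together with the identity $(d+H+d\xi)(\om\we\exp(-\xi))=\bigl((d+H)\om\bigr)\we\exp(-\xi)$, where $H+d\xi$ is the $3$-curvature of $\widehat\la_\xi$. This last identity is a short Leibniz computation: $d(\exp(-\xi))=-d\xi\we\exp(-\xi)$ produces a cross term $(-1)^{|\om|}\om\we(-d\xi)\we\exp(-\xi)$, and the extra $d\xi\we\om\we\exp(-\xi)$ from the curvature shift cancels it because $\om$ has odd degree and $d\xi$ has degree $3$, so they graded-commute up to a sign that makes the two terms opposite. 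Combining, one obtains $R\circ\Xi=(\exp(-\xi))\circ R$ on generators.

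The only step that is not completely formal is the Leibniz/graded-commutativity check in (3); the main thing to keep track of is the sign from $d\exp(-\xi)=-d\xi\we\exp(-\xi)$ balancing against the new $d\xi$ in the twisted differential for $H+d\xi$. Once that is in place, the three identities fall out immediately and extend from generators to the whole group by linearity.
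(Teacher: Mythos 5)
Your argument is correct and is exactly the verification the paper intends (the paper states this proposition without proof, as the analogue of the preceding change-of-twist computations): (1) and (2) are immediate from the definition of $\Xi$, while (3) follows from Proposition \ref{PRP.change.of.ch.under.change.of.gerbeconnection2forms} with $\xi$ replaced by $-\xi$ together with the identity $(d+H+d\xi)\big(\om\we\exp(-\xi)\big)=\big((d+H)\om\big)\we\exp(-\xi)$, whose sign cancellation you check correctly under the paper's convention $(d+H)\mu=d\mu+H\we\mu$ used in Proposition \ref{PRP.twisted.Chern-Weil}. The only point left implicit is the routine well-definedness of $\exp(-\xi)$ as a map $\Om^{\trm{odd}}(X;\C)/\Om_{H,\Ch}\ra\Om^{\trm{odd}}(X;\C)/\Om_{H+d\xi,\Ch}$ appearing in (1), which follows from Proposition \ref{PRP.oddChernChar.naturality.of.twist}(2) together with the same Leibniz computation applied to even forms.
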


\begin{bibdiv}
\begin{biblist}

\bib{ASe}{article}{
   author={Atiyah, Michael},
   author={Segal, Graeme},
   title={Twisted $K$-theory and cohomology},
   conference={
      title={Inspired by S. S. Chern},
   },
   book={
      series={Nankai Tracts Math.},
      volume={11},
      publisher={World Sci. Publ., Hackensack, NJ},
   },
   date={2006},
   pages={5--43},
}
		
\bib{BT}{book}{
   author={Bott, Raoul},
   author={Tu, Loring W.},
   title={Differential forms in algebraic topology},
   series={Graduate Texts in Mathematics},
   volume={82},
   publisher={Springer-Verlag, New York-Berlin},
   date={1982},
   pages={xiv+331},
   isbn={0-387-90613-4},
}

\bib{BCMMS}{article}{
   author={Bouwknegt, Peter},
   author={Carey, Alan L.},
   author={Mathai, Varghese},
   author={Murray, Michael K.},
   author={Stevenson, Danny},
   title={Twisted $K$-theory and $K$-theory of bundle gerbes},
   journal={Comm. Math. Phys.},
   volume={228},
   date={2002},
   number={1},
   pages={17--45},
   issn={0010-3616},
  }

\bib{Br}{book}{
   author={Brylinski, Jean-Luc},
   title={Loop spaces, characteristic classes and geometric quantization},
   series={Modern Birkh\"auser Classics},
   note={Reprint of the 1993 edition},
   publisher={Birkh\"auser Boston, Inc., Boston, MA},
   date={2008},
   pages={xvi+300},
   isbn={978-0-8176-4730-8},
}
	
\bib{BN}{article}{
   author={Ulrich Bunke},
   author={Thomas Nikolaus},
   title={Twisted Differential cohomology},
   journal={ArXiv e-Print},
   date={2014},
   note={\texttt{arXiv:1406.3231v1 [math.AT]}},
}

\bib{BNV}{article}{
   author={Bunke, Ulrich},
   author={Nikolaus, Thomas},
   author={V{\"o}lkl, Michael},
   title={Differential cohomology theories as sheaves of spectra},
   journal={J. Homotopy Relat. Struct.},
   volume={11},
   date={2016},
   number={1},
   pages={1--66},
%   issn={2193-8407},
 %  review={\MR{3462099}},
  % doi={10.1007/s40062-014-0092-5},
}
	
\bib{BS}{article}{
   author={Bunke, Ulrich},
   author={Schick, Thomas},
   title={Smooth $K$-theory},
   language={English, with English and French summaries},
   journal={Ast\'erisque},
   number={328},
   date={2009},
   pages={45--135 (2010)},
   issn={0303-1179},
   isbn={978-2-85629-289-1},
  }

\bib{BS2}{article}{
   author={Bunke, Ulrich},
   author={Schick, Thomas},
   title={Differential K-theory: a survey},
   conference={
      title={Global differential geometry},
   },
   book={
      series={Springer Proc. Math.},
      volume={17},
      publisher={Springer, Heidelberg},
   },
   date={2012},
   pages={303--357},
   %review={\MR{3289847}},
   %doi={10.1007/978-3-642-22842-1_11},
}

\bib{CMW}{article}{
   author={Carey, Alan L.},
   author={Mickelsson, Jouko},
   author={Wang, Bai-Ling},
   title={Differential twisted $K$-theory and applications},
   journal={J. Geom. Phys.},
   volume={59},
   date={2009},
   number={5},
   pages={632--653},
   issn={0393-0440},
%   note={\texttt{arXiv:0708.3114v4 [math.KT]}},
}

 \bib{DK}{article}{
   author={Donovan, P.},
   author={Karoubi, M.},
   title={Graded Brauer groups and $K$-theory with local coefficients},
   journal={Inst. Hautes \'Etudes Sci. Publ. Math.},
   number={38},
   date={1970},
   pages={5--25},
   issn={0073-8301},
}

\bib{F}{article}{
   author={Freed, Daniel S.},
   title={Dirac charge quantization and generalized differential cohomology},
   conference={
      title={Surveys in differential geometry},
   },
   book={
      series={Surv. Differ. Geom., VII},
      publisher={Int. Press, Somerville, MA},
   },
   date={2000},
   pages={129--194},
 }

\bib{FHT}{article}{
   author={Freed, Daniel S.},
   author={Hopkins, Michael J.},
   author={Teleman, Constantin},
   title={Loop groups and twisted $K$-theory I},
   journal={J. Topol.},
   volume={4},
   date={2011},
   number={4},
   pages={737--798},
   issn={1753-8416},
}

\bib{FL}{article}{
   author={Freed, Daniel S.},
   author={Lott, John},
   title={An index theorem in differential $K$-theory},
   journal={Geom. Topol.},
   volume={14},
   date={2010},
   number={2},
   pages={903--966},
   issn={1465-3060},
}

\bib{FM}{article}{
   author={Freed, Daniel S.},
   author={Moore, Gregory W.},
   title={Twisted equivariant matter},
   journal={Ann. Henri Poincar\'e},
   volume={14},
   date={2013},
   number={8},
   pages={1927--2023},
   issn={1424-0637},
}

\bib{GR}{article}{
   author={Gaw{\polhk{e}}dzki, Krzysztof},
   author={Reis, Nuno},
   title={WZW branes and gerbes},
   journal={Rev. Math. Phys.},
   volume={14},
   date={2002},
   number={12},
   pages={1281--1334},
   issn={0129-055X},
}

\bib{HS}{article}{
   author={Hopkins, M. J.},
   author={Singer, I. M.},
   title={Quadratic functions in geometry, topology, and M-theory},
   journal={J. Differential Geom.},
   volume={70},
   date={2005},
   number={3},
   pages={329--452},
%   note={\texttt{arXiv:math/0211216v2 [math.AT]}	}
}
\bib{KV}{article}{
   author={Kahle, Alexander},
   author={Valentino, Alessandro},
   title={$T$-duality and differential $K$-theory},
   journal={Commun. Contemp. Math.},
   volume={16},
   date={2014},
   number={2},
   pages={1350014, 27},
   issn={0219-1997},
}

\bib{Ka1}{article}{
   author={Karoubi, Max},
   title={Homologie cyclique et $K$-th\'eorie},
   language={French, with English summary},
   journal={Ast\'erisque},
   number={149},
   date={1987},
   pages={147},
   issn={0303-1179},
   }
	
\bib{Ka2}{article}{
   author={Karoubi, Max},
   title={Twisted bundles and twisted $K$-theory},
   conference={
      title={Topics in noncommutative geometry},
   },
   book={
      series={Clay Math. Proc.},
      volume={16},
      publisher={Amer. Math. Soc.},
      place={Providence, RI},
   },
   date={2012},
   pages={223--257},
}

\bib{Kl}{book}{
   author={Klonoff, Kevin Robert},
   title={An index theorem in differential K-theory},
   note={Thesis (Ph.D.)--The University of Texas at Austin},
   publisher={ProQuest LLC, Ann Arbor, MI},
   date={2008},
   pages={119},
   isbn={978-0549-70973-2},
   }

\bib{Lo}{article}{
   author={Lott, John},
   title={${\bf R}/{\bf Z}$ index theory},
   journal={Comm. Anal. Geom.},
   volume={2},
   date={1994},
   number={2},
   pages={279--311},
   issn={1019-8385},
}

\bib{Mu}{article}{
   author={Murray, M. K.},
   title={Bundle gerbes},
   journal={J. London Math. Soc. (2)},
   volume={54},
   date={1996},
   number={2},
   pages={403--416},
   issn={0024-6107},
}

\bib{Ro1}{article}{
   author={Rosenberg, Jonathan},
   title={Continuous-trace algebras from the bundle theoretic point of view},
   journal={J. Austral. Math. Soc. Ser. A},
   volume={47},
   date={1989},
   number={3},
   pages={368--381},
   issn={0263-6115},
}

\bib{SS}{article}{
   author={Simons, James},
   author={Sullivan, Dennis},
   title={Structured vector bundles define differential $K$-theory},
   conference={
      title={Quanta of maths},
   },
   book={
      series={Clay Math. Proc.},
      volume={11},
      publisher={Amer. Math. Soc., Providence, RI},
   },
   date={2010},
   pages={579--599},
}

\bib{TWZ}{article}{
   author={Tradler, Thomas},
   author={Wilson, Scott O.},
   author={Zeinalian, Mahmoud},
   title={An elementary differential extension of odd K-theory},
   journal={J. K-Theory},
   volume={12},
   date={2013},
   number={2},
   pages={331--361},
   issn={1865-2433},
  }

\bib{TWZ2}{article}{
   author={Tradler, Thomas},
   author={Wilson, Scott O.},
   author={Zeinalian, Mahmoud},
   title={Differential K-theory as equivalence classes of maps to Grassmannians and unitary groups},
   journal={New York J. Math.},
   volume={22},
   date={2016},
   number={2},
   pages={527-581},
   issn={1076-9803},
%   note={\texttt{arXiv:math/1507.01770v1 [math.KT]}},
   }
   
\bib{Wi}{article}{
   author={Witten, Edward},
   title={D-branes and $K$-theory},
   journal={J. High Energy Phys.},
   date={1998},
   number={12},
   pages={Paper 19, 41 pp.\ (electronic)},
}
\end{biblist}
\end{bibdiv}

\end{document}